\documentclass[reqno]{amsart}
\usepackage[height=9in,a4paper,hmargin={2.45cm,2.45cm}]{geometry}
\usepackage[T1]{fontenc}
\usepackage[utf8]{inputenc}
\usepackage[english]{babel}
\usepackage{amsfonts,amssymb,amsmath,amsthm,mathtools,mathrsfs}
\usepackage{esint}
\usepackage{enumerate,enumitem,xcolor}
\usepackage{pgfplots,float}
\usepgfplotslibrary{groupplots}

\definecolor{blue_links}{RGB}{13,0,180}
\usepackage{hyperref}
\hypersetup{
  colorlinks=true,
  linktoc=all,
  linkcolor=blue_links,
  citecolor=blue_links,
  urlcolor=blue_links,
}

\numberwithin{equation}{section}
\theoremstyle{plain}
\newtheorem{theorem}{Theorem}[section]
\newtheorem{proposition}[theorem]{Proposition}
\newtheorem{lemma}[theorem]{Lemma}
\newtheorem{corollary}[theorem]{Corollary}
\theoremstyle{definition}
\newtheorem{remark}[theorem]{Remark}

\newcommand{\R}{\mathbb R}
\newcommand{\Z}{\mathbb Z}
\newcommand{\N}{\mathbb N}
\newcommand{\T}{\mathbb T}
\newcommand{\HH}{\mathcal H}
\newcommand{\LL}{\mathcal L}
\newcommand{\A}{\mathcal A}
\newcommand{\E}{\mathcal E}
\newcommand{\trn}{\mathscr T}
\newcommand{\varn}{\mathscr A}
\newcommand{\defect}{\mathscr D}
\newcommand{\dd}{\,\mathrm d}
\newcommand{\one}{\mathbf 1}
\newcommand{\sgn}{\operatorname{sign}}
\newcommand{\sech}{\operatorname{sech}}

\newcommand{\supp}{\operatorname{supp}}
\newcommand{\diam}{\operatorname{diam}}

\newcommand{\Fp}{m_{\lambda,h}}
\newcommand{\Cosh}{C_{\lambda,h}}
\newcommand{\Alh}{A_{\lambda,h}}
\newcommand{\mean}[1]{\overline{#1}}

\title[Global minimality of lamellae for isotropic Ohta--Kawasaki and Yukawa energies]{Global minimality of lamellae for isotropic Ohta--Kawasaki and Yukawa energies}
\author[D. Reggiani]{Dario Reggiani}
\address[Dario Reggiani]{Applied Mathematics M\"unster, University of M\"unster\\
	Einsteinstrasse 62, 48149 M\"unster, Germany}
\email{dario.reggiani@uni-muenster.de}

\begin{document}

\begin{abstract}
	We establish a sharp comparison principle for the sharp-interface
	Ohta--Kawasaki energy and its Yukawa-screened counterpart in dimensions
	$2\le d\le 7$, for $m=0$, and under periodic and zero-flux Neumann boundary conditions.
	Whenever the finite optimal stripe width is compatible with the
	domain, we prove global minimality of periodic lamellae and
	characterize all minimizers. The proof combines radial
	comparison kernels, a sharp Fourier bound, and rigidity for a
	directional translation inequality in $BV$. The kernels are positive
	measures up to dimension five; in dimensions six and seven,
	distributional lifting and monotonicity of the averaged translation
	defect yield the same sharp comparison. In the Appendix, we also prove that no finite
	positive measure with the prescribed Fourier multiplier exists in
	dimensions $d\ge6$, 
	and explain why the argument does not directly extend to $d \geq 8$.
\end{abstract}

\maketitle

\subsection*{AI usage statement}
\noindent The central ideas of the proof were produced by OpenAI's ChatGPT-6 Astra. Specifically, the model proposed the construction of the auxiliary measure (Section \ref{sec:measure}), the comparison inequality (Section \ref{sec:proofs}), and the reduction of the Neumann case to the periodic one (Section \ref{sec:proofs}). The model was given a document (Ideas.pdf) by the author summarizing his previous unsuccessful approaches to the problem and why they failed, together with the preprint \cite{CG26}. It was asked to treat a simplified test case, after which the author requested a global comparison against all admissible competitors. The author verified all arguments, rewrote and completed the proofs, structured the paper, wrote the introduction, the discussion of the literature and the overview of the proof (Subsection \ref{susec:ideas}), and Appendix \ref{app:dimension}. LLM assistance was also used in drafting the text. A detailed account of the original interaction together with Ideas.pdf is available as an ancillary
file accompanying the arXiv version of this paper. The extension to $d=6,7$ arose from a later, informal exchange with the same model, prompted by the author's question whether positivity of the measure could be relaxed to a controlled amount of negativity. The model then observed that it suffices for the pairing with the geometric defect to be
non-negative. The author takes full responsibility for the content of this paper.

\section{Introduction}\label{sec:introduction}

The competition between interfacial energy and nonlocal repulsion
is a fundamental mechanism underlying pattern formation and symmetry
breaking. Interfacial energy favours the formation of large domains,
whereas repulsive interactions favour their subdivision into smaller
regions. The balance between these effects can select a characteristic
length scale and give rise to ordered patterns, including droplets,
cylinders, and lamellae.
The formation of layered patterns selects a preferred direction,
even when the interfacial energy and the interaction kernel are isotropic.
A central example is the Ohta--Kawasaki model,
introduced in \cite{OK86} to describe microphase separation in diblock
copolymer melts.

A diblock copolymer consists of two chemically distinct polymer
blocks joined by a covalent bond. The incompatibility of the two
blocks favours segregation, while their connectivity prevents
separation into macroscopically large pure phases.
The relative length of the two blocks determines the morphology.
For symmetric copolymers, lamellar phases of alternating layers are
observed experimentally and predicted by mean-field theory, whereas
asymmetric compositions lead to cylindrical, spherical or gyroid
structures.
The Ohta--Kawasaki functional describes this mechanism through a
local phase-separation energy coupled to a nonlocal term of
Coulomb type (see the derivation from a mean-field description in
\cite{CR03}).

In the standard formulation, on a domain $\Omega\subset\mathbb{R}^d$,
the diffuse-interface energy takes the form
\[
\int_\Omega
\left(
\frac{\varepsilon}{2}|\nabla u|^2
+ \frac{1}{\varepsilon}W(u)
\right)\,dx
+
\gamma
\bigl\langle u-m,(-\Delta)^{-1}(u-m)\bigr\rangle_{L^2(\Omega)}.
\]
Here $u$ is an order parameter describing the local composition,
$W$ is a suitable double-well potential, $\varepsilon$ measures the
thickness of the transition layers, and
\[
m=\frac{1}{|\Omega|}\int_\Omega u\,dx
\]
is the prescribed average composition, which reflects the relative
length of the two blocks; the symmetric case corresponds to $m=0$.
In the sharp-interface description, the order parameter takes
values in $\{-1,1\}$ and the local contribution becomes an
interfacial energy. Absorbing the surface-tension constant into
$J>0$, the corresponding functional is
\[
J|Du|(\Omega)
+
\gamma
\bigl\langle u-m,(-\Delta)^{-1}(u-m)\bigr\rangle_{L^2(\Omega)},
\qquad
u\in BV(\Omega;\{-1,1\}).
\]
A closely related class of models is obtained by replacing the
Coulomb interaction with a screened Coulomb, or Yukawa, interaction.
At the operator level, this amounts to replacing the inverse
Laplacian by
\[
R_\lambda=(-\Delta+\lambda^2)^{-1},
\qquad \lambda>0.
\]

Screened repulsive interactions are particularly relevant to
charged colloidal suspensions and models of protein solutions.
In these systems, short-range attraction can compete with
electrostatic repulsion screened by the surrounding medium,
leading to aggregation and spatially modulated structures
(see \cite{Stradner04,CCA15}).

The present paper studies the sharp-interface energy with zero
preferred composition, namely
\[
\mathcal E_{\lambda,J,\gamma}(u)
=
J|Du|(\Omega)
+
\gamma\langle u,R_\lambda u\rangle_{L^2(\Omega)},
\]
on $\Omega=\mathbb{T}^d_L$ with periodic boundary conditions, or on
$\Omega=(0,L)^d$ with homogeneous Neumann boundary conditions.
For $\lambda=0$, we work with zero-average configurations and
interpret $R_0$ as the inverse Laplacian on that subspace.
For $\lambda>0$, we retain the constant mode of the resolvent
and impose no average constraint: zero is the preferred composition,
while the actual average of $u$ is free to vary.
The constant mode then contributes with a term penalizing an imbalance
between the two phases without forbidding it
(see Corollary \ref{cor:mean}).
In the compatible case, minimizers are lamellae and therefore
automatically have zero average.
Our interest is in the exact selection of lamellar configurations:
periodically alternating parallel layers whose width is determined
by the competition between the two terms.

\subsection{Overview of the literature}
\label{subsec:scientific-context}

Early rigorous results on periodic pattern selection were obtained
in one dimension. M\"uller \cite{Mu93} proved periodicity of minimizers
for a related singularly perturbed variational problem.
For the diblock copolymer functional, Ren and Wei
\cite{RW00,RW03} studied local minimizers and the emergence of
increasingly fine periodic oscillations in singular limits.
We also refer to \cite{FL19} for the proof of periodicity of minimizers
for a one-dimensional liquid drop model.

In higher dimensions, the difficulty is of a different nature.
The isotropy of the interfacial energy and interaction kernel
supplies no preferred orientation in the ambient space.
Proving that minimizers are lamellar requires excluding arbitrary
multidimensional competitors, including configurations with curved
or rearranged layers.
A fundamental result of Alberti, Choksi, and Otto \cite{ACO09}
shows that the energy of minimizers of the sharp-interface
Ohta--Kawasaki functional is approximately uniformly distributed
over subdomains that are large compared with the intrinsic length
scale (see also \cite{Spa09}).

Concerning critical points and local minimality, Ren and Wei
\cite{RW05} studied wriggled lamellar solutions and their stability.
Choksi and Sternberg \cite{CS07} derived the first and second
variations of the sharp-interface nonlocal isoperimetric functional.
Acerbi, Fusco, and Morini \cite{AFM13} proved that strict positivity
of the second variation implies quantitative local minimality with
respect to $L^1$ perturbations (see also \cite{JP17} for the Neumann
case and \cite{Fio26} for related anisotropic local and global
minimality results for a lamella).
Cristoferi \cite{Cri18} constructed locally minimizing periodic
configurations near strictly stable periodic constant-mean-curvature
hypersurfaces, and Acerbi, Chen, and Choi \cite{ACC22} obtained further
criteria for the local minimality of lamellae.

Global minimality has been established in particular geometric or
parameter regimes. Sternberg and Topaloglu \cite{ST11} proved that, on
the flat two-dimensional torus, a single lamellar region is globally
minimizing for sufficiently weak nonlocal interaction and an
appropriate range of volume fractions. Morini and Sternberg
\cite{MS14} proved that minimizers in sufficiently thin domains are
striped, with transitions in the optimal number of layers as the
interaction strength varies. In both cases, the domain geometry or the
closeness to the local isoperimetric problem allows a complete
description of minimizers.

A complementary literature concerns the small-mass and dilute regimes.
The geometry of individual droplets and the minimality of balls at
small mass are studied in
\cite{CiSp13,KM13,KM14,Jul14,BC14,FFMMM15,CR25}.
Effective energies, droplet distributions, and confinement are instead
analyzed in \cite{M10,CP10,CP11,GMS13,GMS14,KMN16,KMN20,ABCT20,FLS26}.

An analogous competition arises in spin systems with short-range
ferromagnetic and long-range antiferromagnetic interactions, where
lamellar patterns are expected to minimize the energy in suitable
regimes near the ferromagnetic transition, that is, when $J$
approaches a critical value $J_c$. For these systems and related
mean-field models, reflection positivity and localization have led
to periodic-ordering results \cite{GLL06,GLL09}, asymptotic optimality
of stripes \cite{GLS14}, and exact striped ground states near the
transition for sufficiently rapidly decaying interactions \cite{GS16}.

In the continuum setting, the main line of results
is based on a blow-down near the critical interaction strength.
As $J\to J_c^-$, the optimal period diverges. After rescaling lengths
by this period, the energy $\Gamma$-converges to a limiting functional
whose structure encodes the competition between the local and the
nonlocal terms.
For energies with an $\ell^1$-anisotropic perimeter and power-law
interactions with $p>2d$,
Goldman and Runa \cite{GR19} identified such a limiting functional
and showed that its only finite-energy states are striped.
Daneri and Runa \cite{DR19} then transferred this rigidity back to
the original functional for $J$ close to $J_c$ and $p\ge d+2$.
For the relevant exponents the limiting functional penalizes corners
infinitely, which forces interfaces to be flat and aligned with the
preferred directions. This yields exact periodic stripe formation
(see also \cite{Ker22}).
Related results extend to diffuse-interface models \cite{DKR22}.
For Yukawa-type interactions with $\ell^1$ anisotropy \cite{DR20},
the corresponding limiting penalty acts as a hard-core constraint
between neighbouring interfaces.
For the screened energy considered here, the critical value is
$J_c=\gamma\lambda^{-3}$. If $h_*$ denotes the optimal stripe width,
the relation $J=\gamma j_\lambda(h_*)$, with $j_\lambda$ defined
in \eqref{eq:j}, gives
$\lambda h_*\to\infty$ if and only if $J/J_c\to1^-$.
Thus the analogous near-critical regime corresponds to stripe
widths much larger than the screening length. 
Finally, Daneri and Runa \cite{DR24} obtained the first rigorous
proof of spontaneous lamellar pattern formation for fully isotropic
continuum energies. For power-law kernels with exponent $p\ge d+3$,
the limiting functional penalizes the curvature of interfaces
infinitely, so its finite-energy states are lamellar.
As a consequence, for $J$ sufficiently close to $J_c$, minimizers are
close in $L^1$, after rescaling, to periodic lamellae, and a
stability argument around stripes then upgrades this
closeness to exact global minimality of periodic lamellae.
In the isotropic screened case considered here, the near-critical regime is expected to be accessible, at least in spirit, by the methods of \cite{DR24}.

Despite this progress, to the best of our knowledge, a sharp lower
bound by the optimal stripe energy density has remained unavailable
for the isotropic Ohta--Kawasaki and Yukawa energies throughout the
parameter range admitting a finite optimal stripe width.
The question addressed here is whether such a comparison can be
established without a near-critical assumption or a thin-domain
reduction, and whether its equality cases can be classified when
the optimal width is compatible with the domain.

\subsection{Overview of the results and proof strategy}

In the present paper we establish a sharp lower bound for the screened
and unscreened energies in dimensions $2\le d\le7$, under periodic and
zero-flux Neumann boundary conditions. The comparison holds for every
admissible binary $BV$ configuration and yields the global minimality
of periodic lamellae whenever their optimal width is compatible with
the domain. We also characterize all equality cases, obtaining a
complete description of the global minimizers under the corresponding
compatibility condition.

The relevant length scale is the width $h_*$ that minimizes the
energy density among periodic stripes. It is characterized by
\[
J=\gamma j_\lambda(h_*),
\]
where $j_\lambda$ is defined in \eqref{eq:j}. In the unscreened case,
this gives the explicit scale
\[
h_*=\left(\frac{12J}{\gamma}\right)^{1/3}.
\]
In the screened case, a finite optimal width exists precisely when
$0<J/\gamma<\lambda^{-3}$, and it is unique. Our main estimate is
\[
\E_{\lambda,J,\gamma}^{\mathrm b}(u)
\ge \gamma\sigma_\lambda(h_*)L^d,
\qquad \mathrm b\in\{\mathrm{per},N\},
\]
where $\gamma\sigma_\lambda(h_*)$ is the optimal stripe energy density,
given by \eqref{eq:sigma}. Thus the value obtained by optimizing the
one-dimensional stripe family provides a lower bound for the full
multidimensional problem.

On the torus $\T_L^d$, the lower bound is attained if and only if
there exists $a\in\Z^d\setminus\{0\}$ such that
$|a|=L/(2h_*)$. Whenever this condition holds, all global minimizers
are of the form
\[
u(x)=\sgn\cos\!\left(\frac{2\pi}{L}a\cdot x+\varphi\right),
\qquad
|a|=\frac{L}{2h_*},\quad \varphi\in\R.
\]
The admissible directions therefore include oblique stripes,
according to the geometry of the frequency lattice.
For the Neumann problem on $Q_L=(0,L)^d$, equality holds precisely
when $n=L/h_*$ is a positive integer, and the complete class of
global minimizers is
\[
u(x)=\pm\sgn\cos\!\left(\frac{\pi n x_j}{L}\right),
\qquad j\in\{1,\ldots,d\}.
\]
The boundary phase intervals have width $h_*/2$, while consecutive
interfaces are separated by $h_*$. In the screened case, no
constraint on the average is imposed: attainment of the sharp
bound itself forces the two phases to have equal volume.
The precise statements are given in
Theorems~\ref{thm:periodic} and~\ref{thm:neumann}.

The bound comes from an auxiliary radial measure or distribution~$\mu$,
which acts as a \emph{dual certificate}. It is constructed so that the
two competing terms of the energy can be compared without loss along
stripes. The unscreened case $\lambda=0$ is treated directly by the
same approach. See Subsection~\ref{susec:ideas} for a detailed explanation.

The comparison starts from the $BV$ translation inequality
(see \cite{Gal11,Mil26}), which bounds translation differences by
the directional variation. In dimensions $d\le5$, averaging this
inequality against a positive measure~$\mu$ bounds the averaged
translations by the perimeter, with coefficient given by the first
absolute directional moment of~$\mu$. In dimensions six and seven,
$\mu$ is instead a distribution. The comparison remains valid because
the spherical average of the translation defect is nonnegative and
nondecreasing, and the distribution has a nonnegative action on this
class of radial functions. A one-sided Fourier bound
$\widehat\mu\le\widehat G_\lambda$ provides the corresponding comparison
with the nonlocal term. Together, these yield an exact identity:
the energy excess over $\gamma\,\sigma_\lambda(h_*)L^d$ is the sum
of a geometric defect and a spectral defect, both nonnegative
(see \eqref{eq:exactdefect}).

The properties of~$\mu$ are guided by requiring both defects to vanish
on the optimal periodic stripes. The translation defect vanishes for
translations of length at most the stripe width, motivating the
support condition
$\operatorname{supp}\mu\subset\overline B_{h_*}$.
Stripes have Fourier support on the odd harmonics of their period,
so vanishing of the spectral defect requires
$\widehat\mu=\widehat G_\lambda$ at those frequencies.
We first construct the one- and two-dimensional profiles and then
obtain the higher-dimensional objects through the Fourier dimension
walk, preserving the radial Fourier multiplier
(Proposition~\ref{prop:measure}).
In this respect, the method resembles the Delsarte and
Cohn--Elkies bounds~\cite{CE03,CKMRV17,V17}: constraints in physical
and Fourier space produce a bound attained by the conjectured
optimizer. Here the certificate is compactly supported and satisfies
a one-sided Fourier bound, while positivity is required only through
its action on the geometric defect.
In the equality case, vanishing of the geometric defect forces
complete parallel interfaces separated by at least $h_*$.
The vanishing spectral defect then fixes every layer width to be
exactly $h_*$. Even reflection transfers the comparison and its
equality cases to the Neumann problem, where the reflection
symmetries determine the stripe direction and phase.

The dimensional restriction reflects the regularity of these
comparison kernels. The one-dimensional profile has a derivative
jump at the edge of its support, and each lift by two dimensions
introduces one radial derivative. Along the odd-dimensional chain,
this produces a jump in the density in dimension three and a surface
measure in dimension five. Positive finite measures with the
prescribed Fourier multiplier cannot exist in dimensions $d\ge6$,
as shown in Appendix~\ref{app:dimension}. Nevertheless, one further
lift preserves nonnegativity of the pairing with the geometric
defect, due to the nonnegativity and monotonicity of its spherical
average. This extends the comparison to $d\le7$. Further lifts
introduce second and higher radial derivatives, whose contributions
are not controlled by these properties alone. Extending the argument
to higher dimensions would likely require additional quantitative
information on the geometric defect, beyond nonnegativity and
monotonicity, possibly combined with a different construction of the
comparison measure. We do not pursue this here.

\subsection*{Acknowledgments} 
\noindent This work was supported by the Deutsche Forschungsgemeinschaft (DFG, German Research Foundation)  under the Germany's Excellence Strategy EXC 2044/2-390685587, Mathematics M\"unster: Dynamics--Geometry--Structure.

\section{Notations and preliminaries}

\noindent\emph{Geometric notation.}
The symbols $|x|$ and $x\cdot y$ denote the Euclidean norm and
scalar product, and $e_1,\ldots,e_d$ are the coordinate unit vectors.
For $r>0$ and $x\in\R^d$, we set
\[
 B_r(x):=\{y\in\R^d:|y-x|<r\},\qquad
 B_r:=B_r(0),\qquad \mathbb S^{d-1}:=\partial B_1.
\]
We write $B_r^d$ when the ambient dimension needs to be specified.
For $e\in\mathbb S^{d-1}$, the orthogonal complement of $e$ is
$e^\perp:=\{y\in\R^d:y\cdot e=0\}$.

We denote by $\LL^d$ the Lebesgue measure on $\R^d$ and by
$\HH^k$ the $k$-dimensional Hausdorff measure, normalized to agree
with $k$-dimensional Lebesgue measure on affine $k$-planes.
For a measurable set $E$, we write $|E|:=\LL^d(E)$,
$\one_E$ for its characteristic function, and
$E\triangle F:=(E\setminus F)\cup(F\setminus E)$ for symmetric
difference. Equalities between measurable sets or functions are
understood up to Lebesgue-null sets unless otherwise stated. We set $\omega_{d-1}:=\HH^{d-1}(\mathbb S^{d-1})$. Given a set $E \subset \R^d$, $\mathrm{co}E$ indicates its convex hull in $\R^d$.

For $\ell>0$, the flat torus
$\T_\ell^d:=\R^d/(\ell\Z)^d$ is endowed with the measure induced
by Lebesgue measure, so $|\T_\ell^d|=\ell^d$.
A function on $\T_\ell^d$ is identified with its $\ell$-periodic
lift to $\R^d$, and we use the same symbol for both.
Integration over the torus means integration over a fundamental
cell, for instance $[0,\ell)^d$. Translations on the torus are
understood through this periodic lift.
If $\Omega$ is a measurable domain or a torus with
$0<|\Omega|<\infty$, the average of $u\in L^1(\Omega)$ is
\[
 \mean u:=\frac{1}{|\Omega|}\int_\Omega u\dd x.
\]
The domain of the average will always be clear from the context.
The pairing $\langle u,v\rangle$ denotes $\int_\Omega uv\dd x$
for real-valued $L^2$ functions. We use the usual Sobolev space
$H^1(\Omega)$ of $L^2$ functions with first weak derivatives in
$L^2$; on a torus, functions and test functions are periodic.

\noindent\emph{Measures, restrictions, and projections.}
For a Radon measure $\mu$ and a Borel set $A$, its restriction is
$(\mu \llcorner A)(B):=\mu(A\cap B)$.
The support $\supp\mu$ of a positive measure is the set of points
whose every open neighborhood has positive $\mu$-measure.
For a Borel map $P:\R^d\to\R^k$, the pushforward $P_\#\mu$ is
defined by $P_\#\mu(A):=\mu(P^{-1}(A))$. Equivalently,
\[
 \int_{\R^k}f(y)\dd(P_\#\mu)(y)
 =\int_{\R^d}f(Px)\dd\mu(x)
\]
for every nonnegative Borel function $f$ and every positive
measure $\mu$. When $P$ is a coordinate projection, $P_\#\mu$
is also called the corresponding marginal of $\mu$.

A positive measure is \emph{radial} if $R_\#\mu=\mu$ for every
orthogonal transformation $R$, and \emph{even} if
$\mu(-A)=\mu(A)$ for every Borel set $A$.
Radial measures are even. 

\noindent\emph{Functions of bounded variation and perimeter.}
For an open set $\Omega\subset\R^d$, the space $BV(\Omega)$
consists of the functions $u\in L^1(\Omega)$ whose distributional
gradient $Du$ is a finite $\R^d$-valued Radon measure. Thus
\[
 \int_\Omega u\,\operatorname{div}\phi\dd x
 =-\int_\Omega\phi\cdot\dd Du
 \qquad\text{for every }\phi\in C_c^1(\Omega;\R^d).
\]
We denote by $|Du|$ its total variation measure and equip
$BV(\Omega)$ with the norm $\|u\|_{L^1(\Omega)}+|Du|(\Omega)$.
The notation $BV_{\mathrm{loc}}$ means that these conditions hold
locally. On a torus, the definition uses smooth periodic test
fields. In particular, the variation on a torus includes any
jump across the identified faces of a fundamental cell.
On an open cube, $|Du|(Q_L)$ is the relative variation inside
the cube and contains no boundary contribution from its faces.

On the bounded domains considered here, a measurable set $E$
has finite perimeter in $\Omega$ if
$\one_E\in BV(\Omega)$, and
$P(E;\Omega):=|D\one_E|(\Omega)$.
We write $BV(\Omega;\{-1,1\})$ for the $BV$ functions taking
only the values $-1$ and $1$ almost everywhere.
For such a function, with $E:=\{u=1\}$, one has
\[
 u=2\one_E-1,\qquad Du=2D\one_E,
 \qquad |Du|(\Omega)=2P(E;\Omega).
\]
If $\partial^*E$ denotes the reduced boundary of $E$, the
perimeter measure satisfies
$|D\one_E|=\HH^{d-1}\llcorner(\partial^*E\cap\Omega)$.

When a specific representative of a measurable set in $\R^d$
is needed, its density-one representative is
\[
 E^{(1)}:=\left\{x\in\R^d:
 \lim_{r\downarrow0}\frac{|E\cap B_r(x)|}{|B_r|}=1\right\}.
\]

\noindent\emph{Fourier conventions.}
On $T=\T_\ell^d$, with volume $V=\ell^d$, the frequency lattice is $\mathcal K_\ell:=2\pi\Z^d/\ell$.
For $U\in L^2(T)$ we use the normalized coefficients
\[
 \widehat U(q):=\frac1V\int_T U(x)e^{-iq\cdot x}\dd x,
 \qquad q\in\mathcal K_\ell.
\]
With this convention,
\[
 U=\sum_{q\in\mathcal K_\ell}\widehat U(q)e^{iq\cdot x}
 \quad\text{in }L^2(T),\qquad
 \int_T|U|^2\dd x=V\sum_{q\in\mathcal K_\ell}|\widehat U(q)|^2.
\]
In particular, $\widehat U(0)=\mean U$, so zero mean is
equivalent to the absence of the constant Fourier mode.
For real-valued $U$, one has
$\widehat U(-q)=\overline{\widehat U(q)}$.

For a finite measure $\mu$ on $\R^d$, we instead use the
un-normalized Euclidean Fourier transform
\[
 \widehat\mu(\xi):=\int_{\R^d}e^{-i\xi\cdot z}\dd\mu(z),
 \qquad \xi\in\R^d.
\]
If $d\mu=k(z)\dd z$, we also write $\widehat k=\widehat\mu$.
For an even positive measure, $\widehat\mu$ is real-valued;
for a radial measure it depends only on $|\xi|$.
If $P:\R^d\to\R^k$ is a coordinate projection, then
\[
 \widehat{P_\#\mu}(\eta)=\widehat\mu(P^{\mathsf T}\eta),
 \qquad \eta\in\R^k,
\]
where $P^{\mathsf T}\eta$ inserts zero entries in the omitted
coordinates. This identity follows directly from the
definition of pushforward.

For a finite positive measure $\mu$ and a periodic function
$U\in L^2(T)$, convolution is defined on the periodic lift by
\[
 (\mu*U)(x):=\int_{\R^d}U(x-z)\dd\mu(z).
\]
It is well-defined almost everywhere and satisfies
\[
 \|\mu*U\|_{L^2(T)}\le\mu(\R^d)\|U\|_{L^2(T)},
 \qquad
 \widehat{\mu*U}(q)=\widehat\mu(q)\widehat U(q).
\]
If $\mu$ is even, $U(x-z)$ may equivalently be replaced by
$U(x+z)$. For real-valued $U$ and even positive $\mu$, Parseval's
identity therefore gives
\[
 \langle U,\mu*U\rangle
 =V\sum_{q\in\mathcal K_\ell}\widehat\mu(q)|\widehat U(q)|^2.
\]

\noindent\emph{One-dimensional functions and striped sets.}
We call a binary function on $\R^d$ \emph{one-dimensional} or \emph{striped} if
there exist $\nu\in\mathbb S^{d-1}$ and
$g\in BV_{\mathrm{loc}}(\R;\{-1,1\})$ such that
\[
 U(x)=g(x\cdot\nu)\qquad\text{for almost every }x\in\R^d.
\]
For a function on a torus this definition refers to its periodic
lift. The positive phase is then
$\{x:x\cdot\nu\in I\}$, where $I:=\{t:g(t)=1\}$.
If $g$ is nonconstant, its jumps describe complete parallel
interfaces, and the regions between consecutive interfaces are
the layers or stripes. The distance between consecutive
interfaces is the layer width. Constant functions are included as degenerate
one-dimensional functions.

By \emph{periodic stripes of width $h>0$} we mean the profiles
\[
 U_{h,\nu,\varphi}(x)
 :=\sgn\cos\!\left(\frac{\pi}{h}x\cdot\nu+\varphi\right),
 \qquad \nu\in\mathbb S^{d-1},\quad\varphi\in\R,
\]
or their positive-phase sets. Their interfaces are
\[
 x\cdot\nu=h\left(k+\frac12-\frac{\varphi}{\pi}\right),
 \qquad k\in\Z.
\]
Both phases have layer width $h$, whereas the full period in
the normal direction is $2h$: translation by $h\nu$ exchanges
the phases, and translation by $2h\nu$ preserves them.
Such a profile defines a function on $\T_\ell^d$ precisely when
\[
 \ell\nu_i\in2h\Z\quad(i=1,\ldots,d),
 \qquad\text{equivalently}\qquad
 \frac{\ell\nu}{2h}\in\Z^d.
\]
We will use the word "width" always
referring to the distance between consecutive interfaces in the
full profile.

\section{Main results}\label{sec:results}

\subsection{The functionals and the admissible classes}
Let $Q_L:=(0,L)^d$ and $\T_L^d:=\R^d/(L\Z)^d$, with $L>0$.  

For $\lambda\ge0$ and either $\Omega=Q_L$ or $\Omega=\T_L^d$, we define the admissible classes
\begin{equation}\label{eq:admissible}
 \A_\lambda(\Omega)=
 \begin{cases}
 BV(\Omega;\{-1,1\}),&\lambda>0,\\
 \{u\in BV(\Omega;\{-1,1\}):\int_\Omega u=0\},&\lambda=0.
 \end{cases}
\end{equation}
For $\lambda>0$, let $R_\lambda^{\mathrm{per}}$ and $R_\lambda^N$ denote the inverses of the operator $(-\Delta+\lambda^2)$ with periodic and zero-flux Neumann boundary conditions, respectively. We remark that in the Neumann case, $w=R_\lambda^Nu$ means
\begin{equation}\label{eq:weakN}
 \int_{Q_L}(\nabla w\cdot\nabla\varphi+\lambda^2w\varphi)\dd x
 =\int_{Q_L}u\varphi\dd x
 \qquad\text{for every }\varphi\in H^1(Q_L).
\end{equation}
The definition for $R_\lambda^{\mathrm{per}}$ uses periodic $H^1$ functions and test functions instead.

When $\lambda=0$, the inverse is defined only if the average of $u$ on $\Omega$ is zero, and the solution is normalized imposing $\int_\Omega w=0$. On the other hand, if $\lambda>0$, then we get $\mean w=\mean u/\lambda^2$, and no additional normalization is imposed.

For $\mathrm b\in\{\mathrm{per},N\}$ then the screened and unscreened Ohta-Kawasaki energies are
\begin{equation}\label{eq:energy}
 N_\lambda^{\mathrm b}(u)=\langle u,R_\lambda^{\mathrm b}u\rangle,
 \qquad
 \E_{\lambda,J,\gamma}^{\mathrm b}(u)
 =J|Du|(\Omega)+\gamma N_\lambda^{\mathrm b}(u),
 \qquad J,\gamma>0.
\end{equation}
In particular, it holds that
\begin{equation}\label{eq:field}
 N_\lambda^{\mathrm b}(u)
 =\int_\Omega\bigl(|\nabla w|^2+\lambda^2w^2\bigr)\dd x.
\end{equation}
For $h>0$ we define the following quantities (they will be motivated by the computations in Section \ref{sec:1D})
\begin{align}
 j_\lambda(h)
 &:=
 \begin{cases}
 \displaystyle\frac{\sinh(\lambda h)-\lambda h}
 {\lambda^3(1+\cosh(\lambda h))},&\lambda>0,\\[1.5ex]
 \displaystyle\frac{h^3}{12},&\lambda=0,
 \end{cases}
 \label{eq:j} \\
 \sigma_\lambda(h)
 &:=
 \begin{cases}
 \displaystyle\frac{\cosh(\lambda h)-1}
 {\lambda^2(1+\cosh(\lambda h))},&\lambda>0,\\[1.5ex]
 \displaystyle\frac{h^2}{4},&\lambda=0.
 \end{cases}
 \label{eq:sigma}
\end{align}
Assuming $\gamma=1$, the quantity $j_\lambda(h)$ represents the parameter $J$ in \eqref{eq:energy} such that periodic stripes of width $h$ are optimal among periodic striped competitors and $\sigma_\lambda(h)$ is the energy density associated to such optimal periodic stripes.

\subsection{Periodic and Neumann minimizers}
We now present the main results of this paper, both in the screened and unscreened case as well as in the periodic and Neumann case.

\begin{theorem}[Periodic comparison and equality cases]\label{thm:periodic}
Let $2\le d\le 7$, $\lambda\ge0$, and $L,h,\gamma>0$. Set $J=\gamma j_\lambda(h)$. Then every $u\in\A_\lambda(\T_L^d)$ satisfies
\begin{equation}\label{eq:mainper}
 \E_{\lambda,J,\gamma}^{\mathrm{per}}(u)
 \ge\gamma\sigma_\lambda(h)L^d.
\end{equation}
Equality holds if and only if
\begin{equation}\label{eq:periodicminimizers}
 u(x)=\sgn\cos\!\left(\frac{2\pi}{L}a\cdot x+\varphi\right),
 \qquad
 a\in\Z^d\setminus\{0\},\quad |a|=\frac{L}{2h},\quad \varphi\in\R.
\end{equation}
Consequently, if such an integer vector exists, the minimum equals $\gamma\sigma_\lambda(h)L^d$, and \eqref{eq:periodicminimizers} is the complete class of global minimizers.
\end{theorem}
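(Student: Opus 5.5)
The proof rests on a dual certificate $\mu$: a radial, even, compactly supported object on $\R^d$ with $\supp\mu\subset\overline B_h$ and a one-sided Fourier bound. Write $\widehat G_\lambda(\xi)=(|\xi|^2+\lambda^2)^{-1}$ for the multiplier of $R_\lambda^{\mathrm{per}}$, so that
\[
N_\lambda^{\mathrm{per}}(u)=L^d\sum_{q\in\mathcal K_L}\widehat G_\lambda(q)|\widehat u(q)|^2,
\]
with $q=0$ excluded when $\lambda=0$. The certificate will satisfy $\widehat\mu\le\widehat G_\lambda$ on $\R^d$, with equality at $|\xi|=\pi k/h$ for all odd $k$. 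Its profile will be fixed so that the combination $J|Du|(\T^d_L)+\gamma\langle u,\mu*u\rangle$ can be computed against the translation defect.

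\textbf{Step 1: comparison identity.} Since $u^2=1$, we have $\tfrac12\int_{\T_L^d}|u(x+z)-u(x)|\,dx=\int(1-u(x)u(x+z))\,dx$. The BV translation inequality gives
\[
\int|u(x+z)-u(x)|\,dx\le |z|\int_{\T_L^d}|e\cdot \nu_u|\,d|Du|,\qquad e=z/|z|,
\]
so the directional defect
\[
D_u(z):=|z|\int|e\cdot\nu_u|\,d|Du|-\int|u(\cdot+z)-u|\,dx
\]
is nonnegative. Pairing with $\mu$ and using radial symmetry, $\int|e\cdot\nu|\,d\mu$ averaged over directions is a constant $c_d\int|z|\,d\mu$. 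This yields
\[
2\,m_\mu\int(1-u\,\mu\!*\!u)\le \kappa_\mu |Du|(\T_L^d)-\langle\mu,D_u\rangle .
\]
I normalize $\mu$ (this is the content of Proposition~\ref{prop:measure}, which I would invoke) so that $\kappa_\mu/(\text{mass factor})=J/\gamma=j_\lambda(h)$. Rearranging and adding the spectral part gives the exact decomposition
\[
\E(u)-\gamma\sigma_\lambda(h)L^d=\gamma\langle\mu,D_u\rangle+\gamma L^d\sum_q(\widehat G_\lambda-\widehat\mu)(q)|\widehat u(q)|^2 ,
\]
where the constant $\sigma_\lambda(h)L^d$ arises from $\widehat\mu(0)$ and the mass term $\int u^2=L^d$. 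The constant is checked by plugging in stripes, for which both defects vanish: $D_u(z)=0$ for $|z|\le h$ because every line segment crosses at most one interface, and the stripe spectrum lives on the odd harmonics $\pi k/h$. For $\lambda=0$ the zero mode is absent.

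\textbf{Step 2: positivity of the geometric pairing.} For $d\le5$, $\mu$ is a positive measure, so $\langle\mu,D_u\rangle\ge0$ is immediate. For $d=6,7$, $\mu$ is obtained from the $5$D measure by one further Fourier dimension lift, i.e.\ it is a first radial derivative of a positive measure. I would show that the spherical average $\bar D_u(r)$ is nonnegative and nondecreasing in $r$: $r\mapsto r\int|e\cdot\nu|$ is linear, while the averaged translation term is concave/subadditive in $r$ along each direction, since the $L^1$ modulus of continuity satisfies $\omega(s+t)\le\omega(s)+\omega(t)$. Integrating by parts in $r$ then gives $\langle\mu,\bar D_u\rangle\ge0$. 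This is the main obstacle, specifically verifying that the boundary term at $r=h$ and the sign of the derivative measure cooperate. I would first try a direct integration by parts against the explicit $7$D radial profile.

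\textbf{Step 3: equality.} If equality holds, then $D_u(z)=0$ for a.e.\ $z\in\supp\mu$, which contains a neighbourhood of the sphere $|z|=h$ or the full ball. Vanishing of the directional defect for all directions and all lengths up to $h$ forces every line to meet $\partial^*E$ at most once within length $h$. By a rigidity argument for the translation inequality, the interfaces are then complete parallel hyperplanes at mutual distance $\ge h$, so $u=g(x\cdot\nu)$. Vanishing of the spectral defect, together with the strict inequality $\widehat\mu<\widehat G_\lambda$ off the odd harmonics (including $q=0$ when $\lambda>0$, which forces zero mean), confines $\widehat u$ to $|q|\in\{\pi k/h: k \text{ odd}\}$. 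Hence $g$ is $2h$-antiperiodic, and combined with spacing $\ge h$ this gives all widths equal to $h$, i.e.\ $u=\sgn\cos(\pi x\cdot\nu/h+\varphi)$. Periodicity on $\T_L^d$ requires $L\nu/(2h)=a\in\Z^d$, so $|a|=L/(2h)$. The converse direction follows because such $u$ make both defects vanish.
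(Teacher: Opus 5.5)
Your outline has the same architecture as the paper's proof. It uses the certificate of Proposition~\ref{prop:measure}, the translation inequality, the exact two-defect identity \eqref{eq:exactdefect}, and rigidity. Vanishing of the spectral term then gives $U(x+h\nu)=-U(x)$ (note: $h$-antiperiodic, not ``$2h$-antiperiodic''), width exactly $h$, and the lattice condition. Your monotonicity argument, subadditivity of $r\mapsto\trn_U(re)$ combined with $\trn_U\le\varn_U$, is also exactly the paper's; concavity is neither true on a torus nor needed.

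\textbf{The gap is Step~2 for $d=6,7$.} This is the step that extends the result from $d\le5$ to $d\le7$, and you leave it open. Behind it is an analytic issue you do not raise. For $d=6,7$, $\mu$ is a distribution of order one, lifted from $\mu^{(4)}$ resp.\ $\mu^{(5)}$ (not from the $5$D measure in both cases). So $\langle u,\mu*u\rangle$ and $\langle\mu,D_u\rangle$ in your Step~1 have no a priori meaning: $D_u$ is only Lipschitz. Three ingredients are missing.

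(i) The weak dimension-walk identity
\[
\big\langle\mu^{(d)}_{\lambda,h},\psi_0(|\cdot|)\big\rangle=\int_{\R^{d-2}}\Big[\psi_0(|z|)+\tfrac{|z|}{d-2}\psi_0'(|z|)\Big]\,d\mu^{(d-2)}_{\lambda,h}(z).
\]
This puts the derivative on the test function and pairs it with a positive measure. Positivity for nonnegative nondecreasing $\psi_0$ is then immediate, and no separate boundary term at $r=h$ has to be tracked. The surface part $\frac{A_{\lambda,h}}{h^2}\HH^4\llcorner\partial B_h$ of $\mu^{(5)}$ simply contributes a positive multiple of $\psi_0(h)+\frac h5\psi_0'(h)$.

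(ii) Regularity of the averaged defect. At least for $d=7$, the formula in (i) evaluates $\psi_0'$ on the sphere $r=h$, and Lipschitz continuity does not give this. The paper proves $\psi_U\in C^1((0,\infty))$ from $\overline{\mathcal C}_U'(r)=(\Delta\mathcal C_U)(B_r)/(\omega_{d-1}r^{d-1})$. Here $\Delta\mathcal C_U=\sum_iD_iU*D_i\check U$ is a finite measure that does not charge spheres, because $|DU|$ vanishes on $\HH^{d-1}$-null sets.

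(iii) A mollification argument. Using $C^1_{\mathrm{loc}}$ convergence of $\overline{\mathcal C}_{U,\varepsilon}$ away from $0$ and the uniform bounds $|\overline{\mathcal C}_{U,\varepsilon}|\le V$, $|\overline{\mathcal C}_{U,\varepsilon}'|\le c_d|DU|(T)$, one shows that $V\sum_q\widehat\mu(q)|\widehat U(q)|^2$ equals the extended pairing with $\overline{\mathcal C}_U$. Only after this does your decomposition hold in $d=6,7$.

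\textbf{Consequences for Step~3, and smaller slips.} The phrase ``$D_u=0$ for a.e.\ $z\in\supp\mu$'' has no meaning for a distribution. The paper argues instead that both terms in the bracket above are nonnegative. Hence the $\psi_0$-term, integrated against the strictly positive absolutely continuous density of $\mu^{(d-2)}$ on $B_h$, must vanish. This gives $\psi_U=0$ on $(0,h)$, and then $\defect_U=0$ on $B_h$ by continuity and nonnegativity. The smaller slips are:
\begin{itemize}
\item The binary identity is $|a-b|=1-ab$ for $a,b\in\{-1,1\}$, with no factor $\tfrac12$.
\item $\mu$ cannot be ``normalized''. Its mass $\sigma_\lambda(h)$ and moment $j_\lambda(h)$ are forced by the multiplier: the paper computes them from the line marginal for $d\le5$, and via the lift formula with $c_{d+2}=\frac d{d+1}c_d$ for $d=6,7$.
\item The rigidity step you only assert is a substantial lemma in its own right. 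It is Lemma~\ref{lem:rigidity}, where the convexity argument of Lemma~\ref{lem:euclidean-monotone} produces the local half-space structure.
\end{itemize}
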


\begin{theorem}[Neumann comparison and equality cases]\label{thm:neumann}
Let $2\le d\le 7$, $\lambda\ge0$, and $L,h,\gamma>0$. Set $J=\gamma j_\lambda(h)$. Then every $u\in\A_\lambda(Q_L)$ satisfies
\begin{equation}\label{eq:mainN}
 \E_{\lambda,J,\gamma}^N(u)
 \ge\gamma\sigma_\lambda(h)L^d.
\end{equation}
Equality is possible if and only if $n=L/h$ is a positive integer. In this case its complete class is
\begin{equation}\label{eq:neumannminimizers}
 u(x)=\pm\sgn\cos\!\left(\frac{\pi n x_j}{L}\right),
 \qquad j\in\{1,\ldots,d\}.
\end{equation}
These functions are exactly the global minimizers.
\end{theorem}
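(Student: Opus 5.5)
The plan is to reduce Theorem~\ref{thm:neumann} to Theorem~\ref{thm:periodic} by even reflection. Given $u\in\A_\lambda(Q_L)$, extend it to $(-L,L)^d$ by $U(\epsilon_1x_1,\dots,\epsilon_dx_d)=u(x)$ for $\epsilon\in\{\pm1\}^d$, and regard $U$ as a function on $\T_{2L}^d$. The extension is well defined because even reflection is compatible with the identification of opposite faces of $[-L,L)^d$.

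The first step is to verify the two identities
\[
|DU|(\T_{2L}^d)=2^d|Du|(Q_L),\qquad N^{\mathrm{per}}_\lambda(U)=2^dN^N_\lambda(u).
\]
\begin{enumerate}[label=(\roman*)]
\item For the perimeter, the traces of $U$ from both sides of every reflection hyperplane $\{x_i=0\}$ and $\{x_i=\pm L\}$ coincide. Hence $|DU|$ charges none of these faces, and the variation is $2^d$ copies of the interior one.
\item For the nonlocal term, let $w=R^N_\lambda u$ and let $W$ be its even reflection, so $W\in H^1(\T_{2L}^d)$. Take a periodic test function $\varphi$, split $\int_{\T_{2L}^d}$ into the $2^d$ reflected cubes, and change variables. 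This turns the periodic weak equation for $W$ into \eqref{eq:weakN} tested with the symmetrization $\sum_\epsilon\varphi(\epsilon\cdot)\in H^1(Q_L)$. Hence $W=R^{\mathrm{per}}_\lambda U$, and \eqref{eq:field} gives the factor $2^d$.
\item For $\lambda=0$, the zero average of $u$ passes to $U$, and likewise the normalization $\int W=0$.
\end{enumerate}
Applying \eqref{eq:mainper} on $\T_{2L}^d$ with the same $h$ yields $2^d\E^N(u)=\E^{\mathrm{per}}(U)\ge\gamma\sigma_\lambda(h)(2L)^d$, which is \eqref{eq:mainN}.

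For the equality case, equality for $u$ forces equality for $U$. By \eqref{eq:periodicminimizers} on $\T_{2L}^d$,
\[
U(x)=\sgn\cos\bigl(\tfrac{\pi}{L}a\cdot x+\varphi\bigr),\qquad a\in\Z^d\setminus\{0\},\quad |a|=\tfrac{L}{h}.
\]
We must then exploit the evenness $U(R_ix)=U(x)$, where $R_i$ is the reflection $x_i\mapsto-x_i$. The interface set of $U$ is a family of hyperplanes with normal $a/|a|$, and it determines $\pm a$. Its image under $R_i$ has normal $R_ia$, so invariance forces $R_ia=\pm a$, that is, either $a_i=0$ or $a\parallel e_i$. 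Since this holds for every $i$ and $a\neq0$, we get $a=\pm n e_j$ for some $j$ and some positive integer $n$, and then $n=|a|=L/h$. Evenness of $t\mapsto\sgn\cos(\pi n t/L+\varphi)$ in $t$ forces its interfaces to be symmetric about $0$, which gives $\varphi\in\pi\Z$. Hence $u=\pm\sgn\cos(\pi n x_j/L)$, and equality is impossible unless $L/h\in\N$.

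Conversely, each function in \eqref{eq:neumannminimizers} has an even reflection that is one of the periodic equality profiles, so the two identities above show it attains equality in \eqref{eq:mainN}. Its average is zero, so it is admissible also for $\lambda=0$, and these functions are therefore exactly the global minimizers.

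I expect the main obstacle to be the careful justification of the two identities, in particular $R^{\mathrm{per}}_\lambda U=\text{reflection of }R^N_\lambda u$. Once that is settled, the symmetry argument in the equality case is elementary. One further point needs care there: the profile must determine $a$ up to sign and $\varphi$ modulo $2\pi$ only up to null sets, and this is used when comparing $U\circ R_i$ with $U$.
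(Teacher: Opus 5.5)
Your proposal follows the paper's proof essentially step for step. It uses even reflection, the two reflection identities (via the symmetrized test function $\sum_\varepsilon\varphi(\varepsilon\,\cdot)$ in \eqref{eq:weakN}), reduction to Theorem~\ref{thm:periodic} on $\T_{2L}^d$, and the symmetry argument $R_ia=\pm a$, which forces $a=\pm n e_j$ with $n=L/h$.

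The phase step needs a correction. Set $\kappa:=\pi n/L$. Symmetry about $0$ of the interface set $\{t:\kappa t+\varphi\in\frac\pi2+\pi\Z\}$ only gives $2\varphi\in\pi\Z$, not $\varphi\in\pi\Z$. The remaining case $\varphi\equiv\frac\pi2\pmod{\pi}$ must be excluded separately. There $U=\mp\sgn\sin(\kappa x_j)$ is odd and has an interface at $x_j=0$.

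There are two easy ways to exclude it:
\begin{itemize}
\item Evenness forbids a jump at the origin, since the phases alternate across each interface.
\item As the paper does, use that $U\circ R_j=U$ interchanges $\widehat U(\pm\kappa e_j)=\frac2\pi e^{\pm i\varphi}$. This gives $\sin\varphi=0$ directly.
\end{itemize}
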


\begin{remark}\label{rem:scope}
For $\lambda>0$, neither theorem assumes zero mean. Attainment of the sharp comparison bound forces it (see Corollary \ref{cor:mean}). On the other hand, for $\lambda=0$, zero mean is necessary for the inverse Laplacian to exist.
\end{remark}

\begin{remark}\label{rem:L}
The condition $L/(2h)\in\N$ is sufficient for coordinate-aligned periodic stripes, however, being the problem isotropic, on a multidimensional torus the lattice condition in \eqref{eq:periodicminimizers} also allows oblique stripes.
\end{remark}

\subsection{Ideas of the proof}\label{susec:ideas}
In this subsection we present the main ideas behind inequalities
\eqref{eq:mainper} and \eqref{eq:mainN}.

Consider first $U \in BV(T;\{-1,1\})$ on a torus $T$ of volume
$V$. Write $DU=\theta|DU|$ and introduce
\begin{equation}\label{eq:translationdefs}
	\trn_U(z):=\int_T|U(x+z)-U(x)|\dd x,\qquad
	\varn_U(z):=\int_T|z\cdot\theta(x)|\,\dd|DU|(x).
\end{equation}
The directional translation inequality (see Lemma~\ref{lem:translation}) gives
\begin{equation}\label{eq:translationheuristic}
	\trn_U(z)\le\varn_U(z).
\end{equation}
Heuristically, the right-hand side counts every interface crossing along
a segment, while the left-hand side only records whether the two
endpoints lie in different phases. Two crossings may return to the
original phase, and this creates a strict loss in
\eqref{eq:translationheuristic}.

Suppose now that $\mu$ is an even, positive, finite measure on $\R^d$ such
that
\begin{equation}\label{eq:designmoments}
	\mu(\R^d)=\sigma_\lambda(h),\qquad
	\int_{\R^d}|z\cdot\nu|\dd\mu(z)=j_\lambda(h)
	\quad\text{for every }\nu\in \mathbb S^{d-1}.
\end{equation}
The identity $ab=1-|a-b|$ for $a,b\in\{-1,1\}$ gives
$\int_T U(x)U(x+z)\dd x=V-\trn_U(z)$. Integrating against $\mu$, and then
integrating \eqref{eq:translationheuristic} against $\mu$ and using
\eqref{eq:designmoments}, we obtain
\begin{equation}
	\langle U,\mu*U\rangle
	=\sigma_\lambda(h)V-\int_{\R^d}\trn_U(z)\dd\mu(z)
	\ge \sigma_\lambda(h)V-j_\lambda(h)|DU|(T),
	\label{eq:heuristicgeometric}
\end{equation}
where the convolution is evaluated on the periodic extension of $U$.

For a one-dimensional striped set with layer width at least $h$, we have
$\trn_U(z)=\varn_U(z)$ for every $|z|\le h$. Assume that the support of
$\mu$ is $\overline{B_h}$ and that its Lebesgue density is positive on
$B_h$. Equality in \eqref{eq:heuristicgeometric} then forces
$\trn_U(z)=\varn_U(z)$ for a.e.\ $|z|<h$, hence for all $|z|<h$ by
continuity of both sides in $z$. By the rigidity statement
(Lemma~\ref{lem:rigidity}), this happens if and only if $U$ is a striped set with layer width at
least $h$.

To control the left-hand side of \eqref{eq:heuristicgeometric} by the
energy, we need $N_\lambda^{\mathrm{per}}(U)\ge\langle U,\mu*U\rangle$. A
sufficient condition is
\begin{equation}\label{eq:designfourier}
	\widehat\mu(\xi)\le\frac1{\lambda^2+|\xi|^2}.
\end{equation}
For $\lambda=0$ this is imposed only for $\xi\ne0$, and the zero-mean
constraint removes the constant mode.

Combining \eqref{eq:designfourier} with \eqref{eq:heuristicgeometric}, we
get
\[
J|DU|(T)+\gamma N_\lambda^{\mathrm{per}}(U)
\ge \gamma\sigma_\lambda(h)V+\bigl(J-\gamma j_\lambda(h)\bigr)|DU|(T).
\]
We choose $h=h_*$ so that $\gamma j_\lambda(h_*)=J$. The perimeter terms
then cancel, and the energy is bounded below by $\gamma\sigma_\lambda(h_*)V$.
More precisely, the energy excess over $\gamma\sigma_\lambda(h_*)V$ equals
the sum of a geometric defect, $\gamma\int(\varn_U-\trn_U)\dd\mu$, and a
spectral defect, $\gamma\bigl(N_\lambda^{\mathrm{per}}(U)
-\langle U,\mu*U\rangle\bigr)$. Both are nonnegative (see
\eqref{eq:exactdefect}).

For the periodic striped set with width $h$ and normal $\nu$, the nonzero
Fourier frequencies are the odd multiples of $(\pi/h)\nu$. The spectral
defect vanishes on this configuration only if
\begin{equation}\label{eq:contact}
	\widehat\mu((2k+1)\pi\nu/h)
	=\frac1{\lambda^2+((2k+1)\pi/h)^2},
	\qquad k\in\Z.
\end{equation}
The measure $\mu$ is thus designed to give equality for the candidate
periodic striped set and a lower bound for every other competitor.

We now sketch how to construct $\mu$, starting from the one-dimensional
case. For $\lambda>0$ the whole-line fundamental solution is
\[
G_\lambda(t)=\frac{e^{-\lambda|t|}}{2\lambda},
\qquad (-\partial_t^2+\lambda^2)G_\lambda=\delta_0.
\]
We look for a symmetric finite difference
\begin{equation}\label{eq:ansatz}
	k_1(t)=aG_\lambda(t)
	-\frac b2\bigl(G_\lambda(t-h)+G_\lambda(t+h)\bigr).
\end{equation}
For $t\ge h$, we have $k_1(t)=\bigl(a-b\cosh(\lambda h)\bigr)G_\lambda(t)$,
and by evenness the same holds for $t\le-h$. Hence compact support in
$[-h,h]$ requires
\[
a=b\cosh(\lambda h).
\]
At an odd stripe frequency $\xi=(2k+1)\pi/h$ we have $\cos(\xi h)=-1$, so
\[
\widehat{k}_1(\xi)=\frac{a-b \cos(\xi h)}{\xi^2+\lambda^2}
=\frac{a+b}{\xi^2+\lambda^2}.
\]
Matching $\widehat{G}_\lambda(\xi)$ therefore requires $a+b=1$. Writing
$\Cosh:=\cosh(\lambda h)$, these two conditions give
\begin{equation}\label{eq:ab}
	a=\frac{\Cosh}{1+\Cosh},\qquad b=\frac1{1+\Cosh}.
\end{equation}
For $0\le t<h$, substituting these values gives
\[
k_1(t)=
\frac{\Cosh e^{-\lambda t}-e^{-\lambda h}\cosh(\lambda t)}
{2\lambda(1+\Cosh)}
=\frac{\sinh(\lambda(h-t))}{2\lambda(1+\Cosh)}.
\]
Therefore
\begin{equation}\label{eq:k1heuristic}
	k_1(t)=
	\frac{\sinh(\lambda(h-|t|))}
	{2\lambda(1+\Cosh)}\,\one_{\{|t|<h\}},
\end{equation}
which is positive on $(-h,h)$, and
\begin{equation}\label{eq:F}
	\Fp(\xi):=\widehat k_1(\xi)
	=\frac{\Cosh-\cos(h|\xi|)}
	{(1+\Cosh)(\lambda^2+|\xi|^2)}.
\end{equation}
Since $\Cosh-\cos(h|\xi|)\le 1+\Cosh$, with equality if and only if
$h|\xi|\in(2\Z+1)\pi$, the multiplier $\Fp$ satisfies
\eqref{eq:designfourier}, with equality exactly at the odd stripe
frequencies \eqref{eq:contact}. In particular, the inequality is strict
at $\xi=0$. A direct computation gives
\[
\int_\R k_1\dd t=\frac{\Cosh-1}{\lambda^2(1+\Cosh)},\qquad
\int_\R |t|\,k_1(t)\dd t=\frac{\sinh(\lambda h)-\lambda h}{\lambda^3(1+\Cosh)},
\]
and these are the quantities $\sigma_\lambda(h)$ and $j_\lambda(h)$ in
\eqref{eq:designmoments}.
In dimension one, the measure $\dd\mu:=k_1(t)\dd t$ therefore has all
the required properties.

The key observation is that a striped set only sees the one-dimensional
marginal of a kernel. Suppose a radial measure $\mu$ on $\R^d$ projects
onto every line to $k_1(t)\dd t$. Then its mass and its directional first
moment are those of $k_1$. By the projection-slice theorem,
$\widehat\mu(\xi)=\Fp(|\xi|)$. Hence \eqref{eq:designmoments},
\eqref{eq:designfourier} and \eqref{eq:contact} are inherited
automatically, and the only nontrivial requirement in higher dimensions
is \emph{positivity} (or, for $d=6,7$, the weaker substitute below).

Projection through two transverse dimensions links radial densities in
dimensions $d$ and $d+2$:
\begin{equation}\label{eq:codim2}
	k_d(r)=2\pi\int_r^h s\,k_{d+2}(s)\dd s,
	\qquad
	k_{d+2}(r)=-\frac{k_d'(r)}{2\pi r}\quad \text{for } 0<r<h.
\end{equation}
In particular, \eqref{eq:k1heuristic} gives the three-dimensional density
\begin{equation}\label{eq:k3heuristic}
	k_3(r)=\frac{\cosh(\lambda(h-r))}
	{4\pi(1+\Cosh)r}\,\one_{\{0<r<h\}}.
\end{equation}
Unlike $k_1$, which vanishes continuously at $|t|=h$,
the density $k_3$ has a jump at $r=h$.
Each lift by two dimensions introduces one radial derivative.
In dimension five, the distributional identity
$k_5=-k_3'/(2\pi r)$ therefore produces a positive surface
measure on $\partial B_h$ accounting for this jump.
Positive finite measures with the prescribed multiplier
cannot exist in dimensions $d\ge6$, as proved in
Appendix~\ref{app:dimension}.
This describes the main mechanism of the comparison
in dimensions $d\le5$.

For $d=6,7$, we continue the Fourier walk in the sense
of distributions and exploit additional properties
of the geometric defect.
Its spherical average
\[
\psi_U(r):=
\fint_{\mathbb S^{d-1}}
\bigl(\varn_U(re)-\trn_U(re)\bigr)\,d\HH^{d-1}(e)
\]
is nonnegative, nondecreasing and Lipschitz on
$[0,\infty)$, and belongs to $C^1((0,\infty))$
(Lemma~\ref{lem:translation}).
The weak form of the dimension walk gives
\[
	\big\langle\mu_{\lambda,h}^{(d)},
	\psi_U(|\cdot|)\big\rangle
	=
	\int_{\R^{d-2}}
	\left[
	\psi_U(|z|)
	+\frac{|z|}{d-2}\psi_U'(|z|)
	\right]\,d\mu_{\lambda,h}^{(d-2)}(z)
	\ge0.
\]
Thus, positivity of the lower-dimensional measure,
together with monotonicity of the averaged defect,
ensures that the geometric contribution remains
nonnegative in dimensions six and seven.
The Fourier multiplier, mass and radial first-moment
identity are preserved by the lift, so the same sharp
comparison follows.

Moreover, a vanishing geometric contribution forces
$\psi_U(r)=0$ for $0<r<h$, because the lower-dimensional
measure has strictly positive absolutely continuous density.
Non-negativity and continuity of the original defect
then imply $\varn_U(z)=\trn_U(z)$ for every $|z|<h$.
Consequently, the same rigidity argument applies.
The construction and the extended pairings are
established in Proposition~\ref{prop:measure}.

Finally, for $\lambda=0$ the same construction applies directly. The corresponding
kernel and multiplier are
\[
k_1(t)=\tfrac14\,(h-|t|)_+,\qquad
\Fp(\xi)=\frac{1-\cos(h|\xi|)}{2|\xi|^2},
\]
with $\sigma_0(h)=h^2/4$ and $j_0(h)=h^3/12$, so that
$h_*=(12J/\gamma)^{1/3}$.

\section{Continuous optimization of one-dimensional periodic stripes}\label{sec:1D}
We now focus on the optimization in one dimension and the derivation of the relevant quantities in \eqref{eq:j} and \eqref{eq:sigma}. Although the results in this section are already known, we present them for completeness. 

\subsection{The periodic cell problem}
For each $h>0$, let $s_h(x):=\sgn\cos(\pi x/h)$ on the one-dimensional torus $\T_{2h}^1$. Define $w_{\lambda,h}:=R_\lambda^{\mathrm{per}}s_h$ as its periodic potential, normalized to have zero mean when $\lambda=0$. We define the \emph{periodic stripe energy density} to be the energy of one full period divided by its length:
\begin{equation}\label{eq:edensitydefinition}
	\begin{split}
		e_{\lambda,J,\gamma}(h)
		:=\frac1{2h}\E_{\lambda,J,\gamma}^{\mathrm{per}}(s_h)=\frac{J}{2h}|Ds_h|(\T_{2h}^1)
		+\frac{\gamma}{2h}\int_{-h}^{h}s_h(x)w_{\lambda,h}(x)\dd x.
	\end{split}
\end{equation}
For $\lambda>0$, $w$ is even and anti-periodic, that is
\[
 w(-x)=w(x),\qquad w(x+h)=-w(x).
\]
On the positive layer $[-h/2,h/2]$, the equation and boundary values are
\[
 -w''+\lambda^2w=1,\qquad w'(0)=0,\qquad w(\pm h/2)=0.
\]
Solving the boundary-value problem gives the solution
\begin{equation}\label{eq:cellpotential}
 p_{\lambda,h}(x)=
 \frac1{\lambda^2}
 \left(1-\frac{\cosh(\lambda x)}{\cosh(\lambda h/2)}\right),
 \qquad |x|\le h/2.
\end{equation}
We extend it by setting $w(x+kh)=(-1)^kp_{\lambda,h}(x)$. Notice that the interface values are zero, and the derivatives match because
\[
 p_{\lambda,h}'(h/2)
 =-\frac{\tanh(\lambda h/2)}{\lambda}
 =-p_{\lambda,h}'(-h/2).
\]
Thus, the extension is $C^1$ and solves the equation distributionally. Moreover, anti-periodicity gives zero mean.

Using the fact that $s_hw$ has period $h$, the nonlocal energy per unit length is given by
\begin{align}
 n_\lambda(h)
 :=\frac1h\int_{-h/2}^{h/2}p_{\lambda,h}(x)\dd x
 =\frac1{\lambda^2}
 -\frac{1}{\lambda^2h\cosh(\lambda h/2)}
       \frac{2\sinh(\lambda h/2)}{\lambda}
 =\frac1{\lambda^2}
 -\frac{2}{\lambda^3h}\tanh(\lambda h/2).
 \label{eq:nlambda}
\end{align}
A similar computation for $\lambda=0$ gives
\[
 p_{0,h}(x)=\frac{h^2}{8}-\frac{x^2}{2}.
\]
The same antiperiodic extension is $C^1$, has zero mean, and solves $-w''=s_h$. Direct integration gives
\begin{equation}\label{eq:n0}
 n_0(h)=\frac1h\int_{-h/2}^{h/2}
 \left(\frac{h^2}{8}-\frac{x^2}{2}\right)\dd x
 =\frac1h\left(\frac{h^3}{8}-\frac{h^3}{24}\right)
 =\frac{h^2}{12}.
\end{equation}
There are two jumps of size two in each cell of length $2h$. The total energy density is therefore
\begin{equation}\label{eq:stripeenergy}
 e_{\lambda,J,\gamma}(h)=\frac{2J}{h}+\gamma n_\lambda(h).
\end{equation}

\subsection{Continuous minimization}
For $\lambda=0$,
\[
 e_{0,J,\gamma}'(h)=-\frac{2J}{h^2}+\frac{\gamma h}{6},
 \qquad
 e_{0,J,\gamma}''(h)=\frac{4J}{h^3}+\frac{\gamma}{6}>0.
\]
The energy diverges at both $0$ and $+\infty$. Hence, its unique global minimizer is given by
\begin{equation}\label{eq:optimal0}
 h_*=\left(\frac{12J}{\gamma}\right)^{1/3},
 \qquad
 e_{0,J,\gamma}(h_*)=\frac{\gamma h_*^2}{4}.
\end{equation}
For $\lambda>0$, define for brevity $a:=\lambda h/2$. Differentiating \eqref{eq:nlambda} yields
\[
 n_\lambda'(h)
 =\frac{2}{\lambda^3h^2}\tanh a
 -\frac1{\lambda^2h}\sech^2a.
\]
Consequently,
\begin{equation}\label{eq:ederivative}
 e_{\lambda,J,\gamma}'(h)
 =\frac{2\gamma}{h^2}
 \left[
 \frac{\tanh a-a\sech^2a}{\lambda^3}
 -\frac J\gamma
 \right]
 =\frac{2\gamma}{h^2}\left[j_\lambda(h)-\frac J\gamma\right].
\end{equation}
Above we used the expression for $j_\lambda(h)$ in \eqref{eq:j}, since
$\sinh(2a)/(1+\cosh(2a))=\tanh a$ and
$2a/(1+\cosh(2a))=a\sech^2a$. Moreover,
\begin{equation}\label{eq:jprime}
 j_\lambda'(h)=\frac{h}{2\lambda}\sech^2a\,\tanh a>0,
 \qquad
 \lim_{h \to 0^+}j_\lambda(h)=0,\quad \lim_{h \to +\infty}j_\lambda(h)=\lambda^{-3}.
\end{equation}
Thus a finite continuous optimal width exists exactly when
$0<J/\gamma<\lambda^{-3}$. It is unique and is characterized by
\begin{equation}\label{eq:tuning}
 J=\gamma j_\lambda(h_*).
\end{equation}
Indeed, the derivative in \eqref{eq:ederivative} is negative before $h_*$ and positive after it, proving global minimality within the continuous periodic stripe family. At this width $h_*$ we have
\begin{align}
 e_{\lambda,J,\gamma}(h_*)
 =\frac{\gamma}{\lambda^2}
 -\frac{2\gamma}{\lambda^3h_*}
       \left(\frac{\lambda h_*}{2}\right)
       \sech^2(\lambda h_*/2)
 =\frac{\gamma}{\lambda^2}\tanh^2(\lambda h_*/2)
 =\gamma\sigma_\lambda(h_*).
 \label{eq:optimalvalue}
\end{align}
If $J/\gamma\ge\lambda^{-3}$, the energy is strictly decreasing at every finite width and converges to $\gamma/\lambda^2$ as $h\to\infty$; and there is no minimizer in the stripe family.

\subsection{Neumann cells and compatibility with the domain}
On $(0,L)$, let $n=L/h\in\N$ and set
\[
 u_n(x):=\sgn\cos(\pi n x/L).
\]
The interfaces are at $(k+\tfrac12)h$, $k=0,\ldots,n-1$. Notice also that the endpoint phase intervals have length $h/2$, whereas all interior intervals have length $h$. There are exactly $n$ interior jumps, giving $|Du_n|((0,L))=2n$.

Restrict the potential constructed above to $(0,L)$.
By evenness and anti-periodicity of $w$, we have
\[
w'(0)=0,
\qquad
w'(L)=w'(nh)=(-1)^n w'(0)=0.
\]
Thus the restriction of $w$ to $(0,L)$ solves
\[
-w''+\lambda^2w=u_n
\quad\text{in }(0,L),
\qquad
w'(0)=w'(L)=0.
\]
Moreover, evenness and anti-periodicity imply
$w(h-x)=-w(x)$, and hence
\[
\int_0^h w(x)\,dx=0.
\]
Consequently,
\[
\int_0^L w(x)\,dx
=\sum_{k=0}^{n-1}\int_{kh}^{(k+1)h}w(x)\,dx
=\sum_{k=0}^{n-1}(-1)^k\int_0^h w(x)\,dx
=0.
\]
In particular, when $\lambda=0$, this restriction satisfies the
required mean-zero normalization.

Finally, both $s_h$ and $w$ are anti-periodic with period $h$, so their product
is $h$-periodic. Since $u_n=s_h|_{(0,L)}$ and $L=nh$, we obtain
\[
\frac1L\int_0^L u_n(x)w(x)\,dx
=\frac1h\int_0^h s_h(x)w(x)\,dx
=\frac1h\int_{-h/2}^{h/2}p_{\lambda,h}(x)\,dx
=n_\lambda(h).
\]
Hence,
\begin{equation}\label{eq:Neumann1D}
 \frac{\E_{\lambda,J,\gamma}^N(u_n)}L
 =\frac{2J}{h}+\gamma n_\lambda(h).
\end{equation}
Thus, for every $h$ with $L/h\in\N$, the Neumann stripe $u_{L/h}$ has
the same energy per unit length as the periodic stripe of width $h$.
In particular, if $L/h_*\in\N$, then $u_{L/h_*}$ attains the optimal periodic
stripe energy density $e_{\lambda,J,\gamma}(h_*)$.

\section{Construction of the comparison measure}\label{sec:measure}
In this section we construct the radial comparison measures and distributions used to derive the sharp comparison inequality. We begin with the one- and two-dimensional profiles and obtain the higher-dimensional objects through the Fourier dimension walk. We then establish their Fourier transforms and moment identities, positivity as measures up to dimension five, and non-negativity of their action on nonnegative, nondecreasing radial test functions in dimensions six and seven.

\begin{proposition}[Radial comparison measures and distributions]\label{prop:measure}
	Let $h>0$, $\lambda\ge0$, and $1\le d\le7$. There is a unique compactly supported distribution
	$\mu_{\lambda,h}^{(d)}$ on $\R^d$ with Fourier transform
	\begin{equation}\label{eq:measureFT}
		\widehat{\mu_{\lambda,h}^{(d)}}(\xi)=
		\frac{\cosh(\lambda h)-\cos(h|\xi|)}
		{(1+\cosh(\lambda h))(\lambda^2+|\xi|^2)}.
	\end{equation}
	The distribution is invariant under orthogonal transformations and supported in
	$\overline B_h$. At $\lambda=0$ the right-hand side is understood by continuity
	at $\xi=0$, where its value is $h^2/4$.
	
	For $1\le d\le5$, $\mu_{\lambda,h}^{(d)}$ is a finite positive measure whose
	absolutely continuous part has strictly positive density almost everywhere in $B_h$.
	Its mass and first absolute directional moments satisfy
	\begin{equation}\label{eq:measuremoments}
		\mu_{\lambda,h}^{(d)}(\R^d)=\sigma_\lambda(h),\qquad
		\int_{\R^d}|z\cdot\nu|\,d\mu_{\lambda,h}^{(d)}(z)=j_\lambda(h)
		\quad(\nu\in\mathbb S^{d-1}).
	\end{equation}
	
	For $d\in\{6,7\}$, $\mu_{\lambda,h}^{(d)}$ has distributional order at most one.
	Its action on smooth radial tests extends to radial functions
	$\psi(z)=\psi_0(|z|)$ with
	\[
	\psi_0\in\operatorname{Lip}([0,h+\delta])\cap C^1((0,h+\delta))
	\quad\text{for some } \delta>0,
	\]
	by the pairing formula
	\begin{equation}\label{eq:liftedradialpairing}
		\big\langle\mu_{\lambda,h}^{(d)},\psi\big\rangle
		:=\int_{\R^{d-2}}
		\left[\psi_0(|z|)+\frac{|z|}{d-2}\psi_0'(|z|)\right]
		\,d\mu_{\lambda,h}^{(d-2)}(z).
	\end{equation}
	The bracket at $z=0$ is assigned the value $\psi_0(0)$.
	In particular, this pairing is nonnegative whenever $\psi_0$ is nonnegative
	and nondecreasing on $[0,h]$.
	The mass and generalized first absolute directional moments in dimensions
	$d\in\{6,7\}$ satisfy
	\begin{equation*}
		\big\langle\mu_{\lambda,h}^{(d)},1\big\rangle=\sigma_\lambda(h),\qquad
		c_d\big\langle\mu_{\lambda,h}^{(d)},|\cdot|\big\rangle=j_\lambda(h),
		\qquad
		c_d:=\frac{\Gamma(d/2)}{\sqrt\pi\,\Gamma((d+1)/2)}.
	\end{equation*}
	Above, the first pairing uses a smooth cut-off $\chi\in C_c^\infty(\R^d)$ equal to one on a neighborhood of $\overline B_h$, while the radial pairing with $|\cdot|$ is defined by \eqref{eq:liftedradialpairing}.
\end{proposition}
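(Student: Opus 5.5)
Uniqueness is immediate: a compactly supported distribution is determined by its Fourier transform. For existence I will write down the objects explicitly, in two chains.

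\emph{Base cases.} For $d=1$ take $\mu^{(1)}:=k_1(t)\dd t$ with $k_1$ from \eqref{eq:k1heuristic} (for $\lambda=0$, $k_1=\tfrac14(h-|t|)_+$). The computation leading to \eqref{eq:F} gives \eqref{eq:measureFT}. Positivity on $(-h,h)$ is clear. The mass and first moment follow from the elementary integrals already recorded, and these agree with $\sigma_\lambda(h)$ and $j_\lambda(h)$.

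For $d=2$ I want a radial density $k_2$ on $B_h$ whose one-dimensional marginal is $k_1$. This is the Abel-type inversion of $k_1(t)=2\int_{|t|}^h k_2(r)\,r(r^2-t^2)^{-1/2}\dd r$:
\[
k_2(r)=-\frac1\pi\int_r^h\frac{k_1'(s)}{\sqrt{s^2-r^2}}\dd s .
\]
Since $k_1'<0$ on $(0,h)$ (as $\sinh(\lambda(h-s))$ is decreasing), we get $k_2>0$ on $B_h$. The density is integrable, with at most a mild singularity at $r=h$ where $k_1'$ is bounded.

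\emph{Dimension walk.} I then define $k_{d+2}=-k_d'/(2\pi r)$ in the distributional sense, as in \eqref{eq:codim2}. This gives:
\begin{itemize}
\item $k_3$ as in \eqref{eq:k3heuristic};
\item $k_4$ from $k_2$;
\item $\mu^{(5)}=-\frac{k_3'}{2\pi r}\one_{B_h}\dd z+\frac{k_3(h^-)}{2\pi h}\,\HH^4\llcorner\partial B_h$.
\end{itemize}
Each of these is positive. The explicit $k_3$ is decreasing and positive on $(0,h)$, so the absolutely continuous part of $\mu^{(5)}$ is positive and the surface term is positive. For $k_4$ I will show that $k_2$ is decreasing. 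The cleanest way is to write $k_2(r)=-\frac1\pi\int_0^{\sqrt{h^2-r^2}}k_1'(\sqrt{r^2+u^2})(r^2+u^2)^{-1/2}\dd u$. Then the integrand $-k_1'(s)/s$ is positive and nonincreasing in $s$ when $k_1''\ge0$ on $(0,h)$, which holds here. Together with the shrinking domain of integration, this gives monotonicity.

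The Fourier identity is preserved by the walk. The projection--slice theorem says that integrating a radial $\R^{d+2}$ object over two transverse directions yields the radial $\R^d$ object. Equivalently, one verifies
\[
\int_{\R^{d+2}}\phi(|z|)\,d\mu^{(d+2)}=\int_{\R^d}\Big[\phi+\tfrac{r}{d}\phi'\Big](|z|)\,d\mu^{(d)}
\]
for radial tests by integrating by parts in $r$. This uses the surface-measure constants $\omega_{d+1}=2\pi\omega_{d-1}/d$. Taking $\phi(r)=$ the radial profile of $e^{-i\xi\cdot z}$ averaged over spheres shows that the two Fourier transforms agree, and hence $\widehat{\mu^{(d)}}$ is independent of $d$. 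Mass and directional first moments then follow by evaluating the Fourier transform at $\xi=0$, and from the marginal: the projection of $\mu^{(d)}$ onto a line is $k_1$. Support in $\overline B_h$ and rotation invariance are built in.

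\emph{Dimensions six and seven.} I \emph{define} $\mu^{(6)},\mu^{(7)}$ by the lifted pairing \eqref{eq:liftedradialpairing} on radial tests, extended to general tests by spherical averaging. This gives a distribution of order one, because the pairing involves only $\psi_0$ and $\psi_0'$, integrated against a finite measure. The Fourier identity follows from the same walk identity. The pairing extends to Lipschitz, $C^1((0,h+\delta))$ profiles by dominated convergence, since $|z|\psi_0'(|z|)$ is bounded and $\mu^{(d-2)}$ is finite. Nonnegativity for nonnegative nondecreasing $\psi_0$ follows directly from positivity of $\mu^{(4)}$ and $\mu^{(5)}$.

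For the moment identities:
\begin{itemize}
\item With $\psi_0=1$, the pairing reduces to the mass of $\mu^{(d-2)}$, which is $\sigma_\lambda(h)$.
\item With $\psi_0=r$, the bracket equals $\frac{d-1}{d-2}|z|$. The first absolute directional moment of a radial measure on $\R^{d-2}$ equals $c_{d-2}\int|z|$. The identity $c_{d-2}(d-2)/(d-1)=c_d$, which follows from Gamma function recursion, then gives $c_d\langle\mu^{(d)},|\cdot|\rangle=j_\lambda(h)$.
\end{itemize}

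\emph{Main obstacle.} I expect the hardest step to be the positivity and monotonicity of $k_2$, which feeds $\mu^{(4)}$ and hence $d=6$. It needs the convexity $k_1''\ge0$; for $\lambda=0$ this is $k_1''=0$ on $(0,h)$, which is fine. It also needs care with the boundary behaviour at $r=h$: $k_2(r)\sim c\sqrt{h-r}$ only if $k_1'(h)\neq0$, and for $\lambda=0$, $k_1'(h^-)=-\tfrac14\neq0$. This boundary behaviour determines whether $\mu^{(4)}$ acquires a surface term.
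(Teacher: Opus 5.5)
Your proposal is correct and follows essentially the paper's route: the explicit $k_1$, then the Abel-inverted $k_2$, which coincides with the paper's formula \eqref{eq:k2}, then the walk $k_{d+2}=-k_d'/(2\pi r)$ with the positive surface term on $\partial B_h$ in $d=5$, and finally the lifted radial pairing with $c_d=\frac{d-2}{d-1}c_{d-2}$ for $d=6,7$. It differs only in that you obtain $k_4>0$ from convexity and monotonicity of $k_1$ rather than from the explicit formula \eqref{eq:k4}, and that you define $\mu^{(6)},\mu^{(7)}$ by the pairing, checking \eqref{eq:measureFT} through the Bessel recurrence $\Omega_d=\Omega_{d+2}+\frac{\rho}{d}\Omega_{d+2}'$ for spherical means of plane waves (which you should state), instead of invoking the distributional walk of \cite{GT13}; two minor slips are that the singularity of $k_2$ is logarithmic at $r=0$ (at $r=h$ it vanishes), and that $k_2(h^-)=0$ already rules out a surface term in $\mu^{(4)}$, so the point you flag as the main obstacle is settled.
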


\begin{proof}
	Set $\Cosh:=\cosh(\lambda h)$ and
	$\Alh:=[8\pi^2(1+\Cosh)]^{-1}$.
	We first construct the one- and two-dimensional measures and then
	obtain the higher-dimensional distributions by the two-step Fourier
	dimension walk.
		
		We shall use the following observation. Let $\mu_d$ be a compactly
		supported radial distribution on $\R^d$, and let
		$\pi_d:\R^d\to\R$ be the coordinate projection
		$\pi_d(z)=z_1$. If $(\pi_d)_\#\mu_d=\mu_1$, then
		\[
		\widehat{\mu_d}(\eta,0,\ldots,0)
		=
		\big\langle\mu_d,e^{-i\eta\pi_d(\cdot)}\big\rangle
		=
		\big\langle\mu_1,e^{-i\eta\,\cdot}\big\rangle
		=
		\widehat{\mu_1}(\eta),
		\qquad \eta\in\R.
		\]
		Here, compact support allows the distributions to act on smooth
		functions without compact support, by inserting a cutoff equal
		to one near their support.
		Moreover, orthogonal invariance of $\mu_d$ implies orthogonal
		invariance of its Fourier transform. Thus, for every $\xi\in\R^d$,
		\[
		\widehat{\mu_d}(\xi)
		=
		\widehat{\mu_d}(|\xi|,0,\ldots,0)
		=
		\widehat{\mu_1}(|\xi|).
		\]
		Consequently, a compactly supported radial distribution with
		one-dimensional marginal $\mu_1$ satisfies
		\eqref{eq:measureFT} whenever $\mu_1$ does.
	
	\noindent\emph{Step 1: the one-dimensional profile.}
	For $\lambda>0$, define
	\begin{equation}\label{eq:k1}
		k_{\lambda,h}^{(1)}(t)
		:=\frac{\sinh(\lambda(h-|t|))}{2\lambda(1+\Cosh)}
		\mathbf1_{\{|t|<h\}},
		\qquad d\mu_{\lambda,h}^{(1)}(t):=k_{\lambda,h}^{(1)}(t)\,dt.
	\end{equation}
	For $\lambda=0$, use its continuous limit
	\begin{equation}\label{eq:k10}
		k_{0,h}^{(1)}(t)=\frac{(h-|t|)_+}{4}.
	\end{equation}
	These are finite positive even densities. 
	Taking Fourier transforms with the convention $e^{-it\xi}$ proves
	\eqref{eq:measureFT} in dimension one (see Subsection \ref{susec:ideas} for the computations). At $\lambda=\xi=0$, the value follows
	by continuity (or by integration of \eqref{eq:k10}).
	
	\noindent\emph{Step 2: the two-dimensional profile.}
	Define $d\mu_{\lambda,h}^{(2)}(z)=k_{\lambda,h}^{(2)}(|z|)\,dz$, where
	\begin{equation}\label{eq:k2}
		k_{\lambda,h}^{(2)}(r)
		:=\frac1{2\pi(1+\Cosh)}
		\int_r^h\frac{\cosh(\lambda(h-s))}{\sqrt{s^2-r^2}}\,ds,
		\qquad 0<r<h,
	\end{equation}
	and the density is zero for $r\ge h$. It is strictly positive on $(0,h)$.
	For $|t|<h$, Tonelli's theorem and the change of variables
	$r^2=t^2+(s^2-t^2)v$ yield
	\begin{align*}
		2\int_{|t|}^h\frac{r k_{\lambda,h}^{(2)}(r)}{\sqrt{r^2-t^2}}\,dr
		&=\frac1{\pi(1+\Cosh)}\int_{|t|}^h\cosh(\lambda(h-s))
		\left[\int_{|t|}^s
		\frac{r\,dr}{\sqrt{s^2-r^2}\sqrt{r^2-t^2}}\right]ds\\
		&=\frac1{2(1+\Cosh)}\int_{|t|}^h\cosh(\lambda(h-s))\,ds
		=k_{\lambda,h}^{(1)}(t).
	\end{align*}
	Thus the one-dimensional marginal of $\mu_{\lambda,h}^{(2)}$
	is $\mu_{\lambda,h}^{(1)}$. In particular,
	$\mu_{\lambda,h}^{(2)}$ is finite. Its rotational invariance,
	together with the preceding observation, proves
	\eqref{eq:measureFT} in dimension two.
	At $\lambda=0$,
	\begin{equation*}
		k_{0,h}^{(2)}(r)=\frac1{4\pi}\operatorname{arcosh}(h/r)
		=\frac1{4\pi}\log\frac{h+\sqrt{h^2-r^2}}r.
	\end{equation*}
	
	\noindent\emph{Step 3: the Fourier walk and positivity through dimension five.}
	For inverse Fourier transforms of a common radial multiplier, the
	distributional dimension walk is
	\begin{equation}\label{eq:fourierwalk}
		k_{\lambda,h}^{(d+2)}(r)
		=-\frac1{2\pi r}\,\partial_r k_{\lambda,h}^{(d)}(r).
	\end{equation}
	We use the radial distributional interpretation of this identity from \cite[Theorem~4.1]{GT13}.
	Starting from dimensions one and two, this constructs the chains
	$1\to3\to5\to7$ and $2\to4\to6$.
	At each step the resulting distribution is orthogonally invariant, has the
	Fourier transform \eqref{eq:measureFT}, and is supported in $\overline B_h$.
	The last assertion also follows from the radial test identity below, since
	differentiation does not enlarge the radial support.
	Uniqueness follows from injectivity of the Fourier transform on tempered
	distributions.
	
	The first odd lift gives
	\begin{equation*}
		k_{\lambda,h}^{(3)}(r)
		=\frac{\cosh(\lambda(h-r))}{4\pi(1+\Cosh)r}
		\mathbf1_{\{0<r<h\}}.
	\end{equation*}
	The profile in dimension one is continuous at $h$, so this step has no
	surface term. In the next lift, its three-dimensional successor has a jump
	at $h$, and thus its distributional derivative gives
	\begin{equation*}
			d\mu_{\lambda,h}^{(5)}(r)
			=\Alh\left[
			\frac{\lambda\sinh(\lambda(h-r))}{r^2}
			+\frac{\cosh(\lambda(h-r))}{r^3}
			\right]\mathbf1_{\{0<r<h\}}\,dz +\frac{\Alh}{h^2}\,d\HH^4\llcorner\partial B_h.
	\end{equation*}
	Both measures are positive, with strictly positive absolutely continuous densities. Their
	origin singularities are respectively $O(r^{-1})$ and $O(r^{-3})$, which
	are integrable in their respective dimensions; the surface part is finite.
	
	For the even lift, integration by parts in \eqref{eq:k2} gives
	\[
	k_{\lambda,h}^{(2)}(r)=\frac1{2\pi(1+\Cosh)}
	\left[\operatorname{arcosh}(h/r)
	+\lambda\int_r^h\sinh(\lambda(h-s))\operatorname{arcosh}(s/r)\,ds\right].
	\]
	Differentiation on $(0,h)$ and \eqref{eq:fourierwalk} yield
	\begin{equation}\label{eq:k4}
		k_{\lambda,h}^{(4)}(r)
		=\frac1{4\pi^2(1+\Cosh)r^2}
		\left[\frac h{\sqrt{h^2-r^2}}
		+\lambda\int_r^h
		\frac{s\sinh(\lambda(h-s))}{\sqrt{s^2-r^2}}\,ds\right],
		\qquad 0<r<h.
	\end{equation}
	The two-dimensional profile tends to zero at $h$, and its derivative is
	integrable near $h$, so this last lift has no surface term.
	The expression in \eqref{eq:k4} is strictly positive, is $O(r^{-2})$ near
	zero, and is $O((h-r)^{-1/2})$ near $h$. Thus it defines a finite positive
	measure in dimension four. These calculations prove all the positivity and
	density assertions through dimension five.
	
	\noindent\emph{Step 4: radial pairings in dimensions six and seven.}
	For a smooth compactly
	supported radial test $\psi(z)=\psi_0(|z|)$, the weak form of
	\eqref{eq:fourierwalk} is
	\begin{align}
		\big\langle\mu_{\lambda,h}^{(d+2)},\psi\big\rangle
		=-\frac{\omega_{d+1}}{2\pi}
		\big\langle \partial_r k_{\lambda,h}^{(d)},r^d\psi_0\big\rangle
		=\frac{\omega_{d+1}}{2\pi}
		\big\langle k_{\lambda,h}^{(d)},(r^d\psi_0)'\big\rangle =\left\langle\mu_{\lambda,h}^{(d)},
		\psi_0(|\cdot|)+\frac{|\cdot|}{d}\psi_0'(|\cdot|)\right\rangle,
		\label{eq:weakfourierwalk}
	\end{align}
	where we used $\omega_{d+1}=2\pi\omega_{d-1}/d$.
	For $d=4,5$, the second equality above is justified using a cut-off approximation on $(\varepsilon,2\varepsilon)$:
	since $k_{\lambda,h}^{(d)}(r)=O(r^{2-d})$ close to zero, the error is bounded by
	\[
	\frac C\varepsilon\int_\varepsilon^{2\varepsilon}
	r^d|k_{\lambda,h}^{(d)}(r)|\,|\psi_0(r)|\,dr
	\le C\varepsilon^2\|\psi_0\|_\infty.
	\]
	Consequently \eqref{eq:weakfourierwalk} gives
	\eqref{eq:liftedradialpairing}. The resulting expression is bounded by a constant
	times the $C^1$ norm of the test function on a fixed neighborhood of $\overline B_h$. This implies that these distributions have order at most one.
	
	To extend the radial pairing, let $\psi_0$ belong to the class in the
	statement. Its derivative is bounded, and $r\psi_0'(r)\to0$ as $r\to0^+$.
	Choose smooth cut-off functions $\chi_\varepsilon$ equal to zero on $[0,\varepsilon]$
	and to one on $[2\varepsilon,\infty)$, with
	$|\chi_\varepsilon'|\le C/\varepsilon$. The profiles
	\[
	\psi_0(0)+\chi_\varepsilon(r)(\psi_0(r)-\psi_0(0))
	\]
	converge to $\psi_0$ in the norm
	$\|\psi_0\|_\infty+\|r\psi_0'\|_\infty$ on a neighborhood of $[0,h]$.
	They are constant near zero and may then be approximated smoothly away
	from zero in the same norm. A fixed outer cut-off makes the corresponding
	radial tests compactly supported. Since $\mu_{\lambda,h}^{(d)}$ is finite for $d=4,5$,
	we get convergence of their pairings to the right-hand side of
	\eqref{eq:liftedradialpairing} establishing the assertion.

	\noindent\emph{Step 5: mass and first absolute directional moments.}
	Recall that for $1\le d\le5$, the line marginal of $\mu_{\lambda,h}^{(d)}$ is given by $\mu_{\lambda,h}^{(1)}$.
	Rotational invariance makes this true in every direction. For $\lambda>0$,
	integration of \eqref{eq:k1} therefore gives
	\begin{align*}
		\mu_{\lambda,h}^{(d)}(\R^d)
		&=\frac1{\lambda(1+\Cosh)}\int_0^h\sinh(\lambda(h-t))\,dt
		=\frac{\Cosh-1}{\lambda^2(1+\Cosh)}=\sigma_\lambda(h),
		\\
		\int_{\R^d}|z\cdot\nu|\,d\mu_{\lambda,h}^{(d)}(z)
		&=\frac1{\lambda(1+\Cosh)}\int_0^h t\sinh(\lambda(h-t))\,dt
		=\frac{\sinh(\lambda h)-\lambda h}{\lambda^3(1+\Cosh)}
		=j_\lambda(h).
	\end{align*}
	For $\lambda=0$, \eqref{eq:k10} gives
	\begin{equation*}
		\int_\R k_{0,h}^{(1)}(t)\,dt=\frac{h^2}{4},\qquad
		\int_\R|t|k_{0,h}^{(1)}(t)\,dt=\frac{h^3}{12}.
	\end{equation*}
	
	It remains to check the lifted moments. For $d=4,5$, formula
	\eqref{eq:weakfourierwalk} with a cut-off equal to one near the support
	preserves the mass. Moreover, using the extended radial pairing,
	\[
	\big\langle\mu_{\lambda,h}^{(d+2)},|\cdot|\big\rangle
	=\frac{d+1}{d}\int_{\R^d}|z|\,d\mu_{\lambda,h}^{(d)}(z).
	\]
	Since
	\[
	c_d=\frac1{\omega_{d-1}}\int_{\mathbb S^{d-1}}|\theta_1|\,d\HH^{d-1}(\theta),
	\qquad c_{d+2}=\frac d{d+1}c_d,
	\]
	radiality and \eqref{eq:measuremoments} imply
	\[
	c_{d+2}\big\langle\mu_{\lambda,h}^{(d+2)},|\cdot|\big\rangle
	=c_d\int_{\R^d}|z|\,d\mu_{\lambda,h}^{(d)}(z)=j_\lambda(h).
	\]
	This completes the proof.
\end{proof}

\begin{figure}[H]
\centering
\begingroup
\pgfplotsset{compat=1.18}
\definecolor{measurezero}{RGB}{35,79,112}
\definecolor{measurescreened}{RGB}{153,67,36}
\begin{tikzpicture}
\begin{groupplot}[
  group style={group size=3 by 2,horizontal sep=1.13cm,vertical sep=1.55cm},
  width=4.05cm,height=3.25cm,scale only axis,
  xmin=0,xmax=1.23,ymin=0,
  axis lines=left,axis line style={thin},
  xlabel={$r$},ylabel={$k_{\lambda,1}^{(d)}(r)$},
  xtick={0,0.5,1},
  tick align=outside,tick style={thin},
  tick label style={font=\scriptsize},
  label style={font=\footnotesize},
  title style={font=\small,yshift=2pt},
  scaled ticks=false,
  ymajorgrids=true,grid style={gray!18,thin},
  clip=false,
]
\nextgroupplot[title={$d=1,\quad\lambda=0$},ymax=0.28,ytick={0,0.1,0.2}]
\addplot[measurezero,line width=1pt,line cap=round,line join=round] coordinates {
    (0.000000000,0.250000000) (1.000000000,0.000000000)
};
\addplot[measurezero,line width=1pt] coordinates {(1,0) (1.2,0)};
\nextgroupplot[title={$d=2,\quad\lambda=0$},ymax=0.36,ytick={0,0.1,0.2,0.3}]
\addplot[measurezero,line width=1pt,line cap=round,line join=round,<-] coordinates {
    (0.028452618,0.338400000) (0.030347346,0.333267511) (0.032368249,0.328134717) (0.034523728,0.323001575)
    (0.036822746,0.317868038) (0.039274860,0.312734050) (0.041890267,0.307599551) (0.044679840,0.302464469)
    (0.047655178,0.297328724) (0.050828650,0.292192223) (0.054213452,0.287054864) (0.057823655,0.281916528)
    (0.061674270,0.276777078) (0.065781308,0.271636361) (0.070161842,0.266494202) (0.074834087,0.261350400)
    (0.079817468,0.256204727) (0.085132704,0.251056924) (0.090801895,0.245906694) (0.096848610,0.240753699)
    (0.103297991,0.235597554) (0.110176851,0.230437816) (0.117513791,0.225273984) (0.125339316,0.220105481)
    (0.133685961,0.214931649) (0.142588429,0.209751735) (0.152083733,0.204564875) (0.162211353,0.199370081)
    (0.173013395,0.194166214) (0.184534771,0.188951968) (0.196823383,0.183725840) (0.209930323,0.178486095)
    (0.223910085,0.173230728) (0.238820794,0.167957417) (0.254724442,0.162663469) (0.271687152,0.157345745)
    (0.289779449,0.152000579) (0.309076556,0.146623671) (0.329658703,0.141209955) (0.351611465,0.135753430)
    (0.375026114,0.130246947) (0.400000000,0.124681922) (0.410555556,0.122414649) (0.421111111,0.120193386)
    (0.431666667,0.118015381) (0.442222222,0.115878059) (0.452777778,0.113779012) (0.463333333,0.111715973)
    (0.473888889,0.109686810) (0.484444444,0.107689507) (0.495000000,0.105722155) (0.505555556,0.103782940)
    (0.516111111,0.101870134) (0.526666667,0.099982085) (0.537222222,0.098117211) (0.547777778,0.096273989)
    (0.558333333,0.094450948) (0.568888889,0.092646667) (0.579444444,0.090859761) (0.590000000,0.089088880)
    (0.600555556,0.087332703) (0.611111111,0.085589929) (0.621666667,0.083859271) (0.632222222,0.082139453)
    (0.642777778,0.080429202) (0.653333333,0.078727240) (0.663888889,0.077032282) (0.674444444,0.075343021)
    (0.685000000,0.073658128) (0.695555556,0.071976239) (0.706111111,0.070295948) (0.716666667,0.068615791)
    (0.727222222,0.066934243) (0.737777778,0.065249695) (0.748333333,0.063560444) (0.758888889,0.061864671)
    (0.769444444,0.060160422) (0.780000000,0.058445575) (0.790555556,0.056717816) (0.801111111,0.054974593)
    (0.811666667,0.053213069) (0.822222222,0.051430063) (0.832777778,0.049621968) (0.843333333,0.047784655)
    (0.853888889,0.045913341) (0.864444444,0.044002411) (0.875000000,0.042045184) (0.885555556,0.040033583)
    (0.896111111,0.037957660) (0.906666667,0.035804904) (0.917222222,0.033559175) (0.927777778,0.031198999)
    (0.938333333,0.028694687) (0.948888889,0.026003073) (0.959444444,0.023056974) (0.970000000,0.019740896)
    (0.978409430,0.016687123) (0.984461576,0.014120161) (0.988817219,0.011956762) (0.991951913,0.010130082)
    (0.994207907,0.008585660) (0.995831514,0.007278636) (0.997000000,0.006171764) (0.997840943,0.005233934)
    (0.998446158,0.004439049) (0.998881722,0.003765152) (0.999195191,0.003193722) (0.999420791,0.002709117)
    (0.999583151,0.002298105) (0.999700000,0.001949486) (0.999784094,0.001653774) (0.999844616,0.001402932)
    (0.999888172,0.001190146) (0.999919519,0.001009639) (0.999942079,0.000856512) (0.999958315,0.000726611)
    (0.999970000,0.000616412) (1.000000000,0.000000000)
};
\addplot[measurezero,line width=1pt] coordinates {(1,0) (1.2,0)};
\nextgroupplot[title={$d=3,\quad\lambda=0$},ymax=0.4,ytick={0,0.1,0.2,0.3,0.4}]
\addplot[measurezero,line width=1pt,line cap=round,line join=round,<-] coordinates {
    (0.105821106,0.376000000) (0.109309362,0.364001170) (0.112912605,0.352385243) (0.116634624,0.341140002)
    (0.120479334,0.330253616) (0.124450780,0.319714635) (0.128553140,0.309511971) (0.132790729,0.299634892)
    (0.137168005,0.290073008) (0.141689572,0.280816261) (0.146360186,0.271854914) (0.151184761,0.263179539)
    (0.156168373,0.254781011) (0.161316262,0.246650494) (0.166633845,0.238779437) (0.172126716,0.231159560)
    (0.177800651,0.223782846) (0.183661621,0.216641536) (0.189715790,0.209728119) (0.195969527,0.203035321)
    (0.202429411,0.196556102) (0.209102236,0.190283646) (0.215995022,0.184211356) (0.223115020,0.178332843)
    (0.230469720,0.172641924) (0.238066857,0.167132612) (0.245914425,0.161799113) (0.254020678,0.156635814)
    (0.262394143,0.151637286) (0.271043628,0.146798270) (0.279978233,0.142113676) (0.289207355,0.137578575)
    (0.298740704,0.133188197) (0.308588307,0.128937924) (0.318760523,0.124823285) (0.329268054,0.120839952)
    (0.340121952,0.116983734) (0.351333634,0.113250574) (0.362914896,0.109636546) (0.374877918,0.106137849)
    (0.387235287,0.102750801) (0.400000000,0.099471839) (0.410555556,0.096914377) (0.421111111,0.094485125)
    (0.431666667,0.092174677) (0.442222222,0.089974528) (0.452777778,0.087876962) (0.463333333,0.085874969)
    (0.473888889,0.083962162) (0.484444444,0.082132711) (0.495000000,0.080381284) (0.505555556,0.078702994)
    (0.516111111,0.077093352) (0.526666667,0.075548232) (0.537222222,0.074063831) (0.547777778,0.072636637)
    (0.558333333,0.071263407) (0.568888889,0.069941137) (0.579444444,0.068667042) (0.590000000,0.067438535)
    (0.600555556,0.066253214) (0.611111111,0.065108840) (0.621666667,0.064003328) (0.632222222,0.062934731)
    (0.642777778,0.061901231) (0.653333333,0.060901126) (0.663888889,0.059932824) (0.674444444,0.058994831)
    (0.685000000,0.058085746) (0.695555556,0.057204253) (0.706111111,0.056349114) (0.716666667,0.055519166)
    (0.727222222,0.054713311) (0.737777778,0.053930515) (0.748333333,0.053169803) (0.758888889,0.052430252)
    (0.769444444,0.051710992) (0.780000000,0.051011200) (0.790555556,0.050330094) (0.801111111,0.049666938)
    (0.811666667,0.049021030) (0.822222222,0.048391706) (0.832777778,0.047778335) (0.843333333,0.047180319)
    (0.853888889,0.046597088) (0.864444444,0.046028101) (0.875000000,0.045472841) (0.885555556,0.044930818)
    (0.896111111,0.044401565) (0.906666667,0.043884635) (0.917222222,0.043379603) (0.927777778,0.042886063)
    (0.938333333,0.042403626) (0.948888889,0.041931923) (0.959444444,0.041470599) (0.970000000,0.041019315)
    (0.978409430,0.040666754) (0.984461576,0.040416748) (0.988817219,0.040238717) (0.991951913,0.040111557)
    (0.994207907,0.040020538) (0.995831514,0.039955289) (0.997000000,0.039908461) (0.997840943,0.039874828)
    (0.998446158,0.039850657) (0.998881722,0.039833280) (0.999195191,0.039820784) (0.999420791,0.039811795)
    (0.999583151,0.039805329) (0.999700000,0.039800676) (0.999784094,0.039797328) (0.999844616,0.039794919)
    (0.999888172,0.039793186) (0.999919519,0.039791938) (0.999942079,0.039791041) (0.999958315,0.039790394)
    (0.999970000,0.039789929) (1.000000000,0.039788736)
};
\addplot[measurezero,line width=1pt] coordinates {(1,0) (1.2,0)};
\draw[measurezero,densely dotted,thin] (axis cs:1,0) -- (axis cs:1,0.039788736);
\addplot[only marks,mark=*,mark size=0.8pt,mark options={draw=measurezero,fill=white}] coordinates {(1,0.039788736)};
\addplot[only marks,mark=*,mark size=0.5pt,measurezero] coordinates {(1,0)};
\nextgroupplot[title={$d=1,\quad\lambda=3$},ymax=0.28,ytick={0,0.1,0.2}]
\addplot[measurescreened,line width=1pt,line cap=round,line join=round] coordinates {
    (0.000000000,0.150858042) (0.012500000,0.145277502) (0.025000000,0.139901283) (0.037500000,0.134721823)
    (0.050000000,0.129731837) (0.062500000,0.124924309) (0.075000000,0.120292476) (0.087500000,0.115829823)
    (0.100000000,0.111530076) (0.112500000,0.107387186) (0.125000000,0.103395327) (0.137500000,0.099548885)
    (0.150000000,0.095842450) (0.162500000,0.092270809) (0.175000000,0.088828939) (0.187500000,0.085512000)
    (0.200000000,0.082315326) (0.212500000,0.079234421) (0.225000000,0.076264953) (0.237500000,0.073402745)
    (0.250000000,0.070643771) (0.262500000,0.067984153) (0.275000000,0.065420148) (0.287500000,0.062948150)
    (0.300000000,0.060564685) (0.312500000,0.058266398) (0.325000000,0.056050058) (0.337500000,0.053912547)
    (0.350000000,0.051850860) (0.362500000,0.049862097) (0.375000000,0.047943460) (0.387500000,0.046092252)
    (0.400000000,0.044305869) (0.412500000,0.042581798) (0.425000000,0.040917615) (0.437500000,0.039310979)
    (0.450000000,0.037759630) (0.462500000,0.036261388) (0.475000000,0.034814143) (0.487500000,0.033415862)
    (0.500000000,0.032064578) (0.512500000,0.030758389) (0.525000000,0.029495460) (0.537500000,0.028274013)
    (0.550000000,0.027092332) (0.562500000,0.025948753) (0.575000000,0.024841670) (0.587500000,0.023769524)
    (0.600000000,0.022730807) (0.612500000,0.021724060) (0.625000000,0.020747866) (0.637500000,0.019800852)
    (0.650000000,0.018881686) (0.662500000,0.017989075) (0.675000000,0.017121765) (0.687500000,0.016278535)
    (0.700000000,0.015458199) (0.712500000,0.014659604) (0.725000000,0.013881626) (0.737500000,0.013123172)
    (0.750000000,0.012383174) (0.762500000,0.011660593) (0.775000000,0.010954411) (0.787500000,0.010263635)
    (0.800000000,0.009587294) (0.812500000,0.008924437) (0.825000000,0.008274131) (0.837500000,0.007635463)
    (0.850000000,0.007007533) (0.862500000,0.006389458) (0.875000000,0.005780370) (0.887500000,0.005179411)
    (0.900000000,0.004585737) (0.912500000,0.003998512) (0.925000000,0.003416910) (0.937500000,0.002840115)
    (0.950000000,0.002267313) (0.962500000,0.001697701) (0.975000000,0.001130476) (0.987500000,0.000564841)
    (1.000000000,0.000000000)
};
\addplot[measurescreened,line width=1pt] coordinates {(1,0) (1.2,0)};
\nextgroupplot[title={$d=2,\quad\lambda=3$},ymax=0.36,ytick={0,0.1,0.2,0.3}]
\addplot[measurescreened,line width=1pt,line cap=round,line join=round,<-] coordinates {
    (0.036943496,0.338400000) (0.039153468,0.330141642) (0.041495641,0.321898594) (0.043977924,0.313672311)
    (0.046608698,0.305464377) (0.049396845,0.297276508) (0.052351780,0.289110570) (0.055483481,0.280968584)
    (0.058802521,0.272852740) (0.062320107,0.264765404) (0.066048116,0.256709136) (0.069999135,0.248686693)
    (0.074186506,0.240701048) (0.078624367,0.232755399) (0.083327702,0.224853179) (0.088312392,0.216998071)
    (0.093595268,0.209194018) (0.099194168,0.201445229) (0.105127995,0.193756196) (0.111416786,0.186131697)
    (0.118081774,0.178576804) (0.125145464,0.171096890) (0.132631706,0.163697628) (0.140565778,0.156384998)
    (0.148974469,0.149165276) (0.157886170,0.142045036) (0.167330972,0.135031134) (0.177340765,0.128130696)
    (0.187949347,0.121351096) (0.199192538,0.114699932) (0.211108300,0.108184993) (0.223736867,0.101814222)
    (0.237120879,0.095595666) (0.251305527,0.089537424) (0.266338705,0.083647583) (0.282271173,0.077934146)
    (0.299156726,0.072404953) (0.317052379,0.067067588) (0.336018555,0.061929279) (0.356119294,0.056996787)
    (0.377422466,0.052276289) (0.400000000,0.047773239) (0.410555556,0.045826310) (0.421111111,0.043972097)
    (0.431666667,0.042205225) (0.442222222,0.040520699) (0.452777778,0.038913876) (0.463333333,0.037380427)
    (0.473888889,0.035916312) (0.484444444,0.034517755) (0.495000000,0.033181222) (0.505555556,0.031903400)
    (0.516111111,0.030681180) (0.526666667,0.029511640) (0.537222222,0.028392029) (0.547777778,0.027319756)
    (0.558333333,0.026292376) (0.568888889,0.025307579) (0.579444444,0.024363180) (0.590000000,0.023457111)
    (0.600555556,0.022587407) (0.611111111,0.021752204) (0.621666667,0.020949728) (0.632222222,0.020178291)
    (0.642777778,0.019436278) (0.653333333,0.018722149) (0.663888889,0.018034428) (0.674444444,0.017371696)
    (0.685000000,0.016732592) (0.695555556,0.016115799) (0.706111111,0.015520048) (0.716666667,0.014944105)
    (0.727222222,0.014386769) (0.737777778,0.013846868) (0.748333333,0.013323250) (0.758888889,0.012814779)
    (0.769444444,0.012320330) (0.780000000,0.011838778) (0.790555556,0.011368993) (0.801111111,0.010909828)
    (0.811666667,0.010460113) (0.822222222,0.010018635) (0.832777778,0.009584127) (0.843333333,0.009155248)
    (0.853888889,0.008730553) (0.864444444,0.008308463) (0.875000000,0.007887219) (0.885555556,0.007464821)
    (0.896111111,0.007038938) (0.906666667,0.006606781) (0.917222222,0.006164909) (0.927777778,0.005708913)
    (0.938333333,0.005232894) (0.948888889,0.004728507) (0.959444444,0.004183044) (0.970000000,0.003575032)
    (0.978409430,0.003018852) (0.984461576,0.002553087) (0.988817219,0.002161315) (0.991951913,0.001830858)
    (0.994207907,0.001551611) (0.995831514,0.001315353) (0.997000000,0.001115303) (0.997840943,0.000945817)
    (0.998446158,0.000802170) (0.998881722,0.000680390) (0.999195191,0.000577128) (0.999420791,0.000489556)
    (0.999583151,0.000415283) (0.999700000,0.000352285) (0.999784094,0.000298848) (0.999844616,0.000253519)
    (0.999888172,0.000215067) (0.999919519,0.000182448) (0.999942079,0.000154777) (0.999958315,0.000131303)
    (0.999970000,0.000111390) (1.000000000,0.000000000)
};
\addplot[measurescreened,line width=1pt] coordinates {(1,0) (1.2,0)};
\nextgroupplot[title={$d=3,\quad\lambda=3$},ymax=0.4,ytick={0,0.1,0.2,0.3,0.4}]
\addplot[measurescreened,line width=1pt,line cap=round,line join=round,<-] coordinates {
    (0.130524286,0.376000000) (0.134138664,0.361965722) (0.137853129,0.348352743) (0.141670452,0.335150418)
    (0.145593481,0.322348389) (0.149625143,0.309936575) (0.153768447,0.297905162) (0.158026485,0.286244596)
    (0.162402433,0.274945575) (0.166899556,0.263999043) (0.171521210,0.253396179) (0.176270843,0.243128391)
    (0.181151999,0.233187310) (0.186168321,0.223564781) (0.191323551,0.214252858) (0.196621535,0.205243796)
    (0.202066227,0.196530043) (0.207661690,0.188104237) (0.213412097,0.179959197) (0.219321741,0.172087916)
    (0.225395029,0.164483560) (0.231636495,0.157139454) (0.238050794,0.150049085) (0.244642714,0.143206089)
    (0.251417171,0.136604250) (0.258379222,0.130237494) (0.265534060,0.124099883) (0.272887025,0.118185609)
    (0.280443602,0.112488991) (0.288209431,0.107004469) (0.296190304,0.101726603) (0.304392178,0.096650061)
    (0.312821172,0.091769623) (0.321483575,0.087080171) (0.330385850,0.082576690) (0.339534641,0.078254258)
    (0.348936772,0.074108049) (0.358599261,0.070133326) (0.368529315,0.066325438) (0.378734345,0.062679818)
    (0.389221965,0.059191982) (0.400000000,0.055857519) (0.410555556,0.052816742) (0.421111111,0.049980045)
    (0.431666667,0.047330977) (0.442222222,0.044854706) (0.452777778,0.042537827) (0.463333333,0.040368201)
    (0.473888889,0.038334811) (0.484444444,0.036427638) (0.495000000,0.034637557) (0.505555556,0.032956238)
    (0.516111111,0.031376067) (0.526666667,0.029890071) (0.537222222,0.028491857) (0.547777778,0.027175551)
    (0.558333333,0.025935755) (0.568888889,0.024767494) (0.579444444,0.023666183) (0.590000000,0.022627588)
    (0.600555556,0.021647795) (0.611111111,0.020723185) (0.621666667,0.019850403) (0.632222222,0.019026339)
    (0.642777778,0.018248109) (0.653333333,0.017513029) (0.663888889,0.016818609) (0.674444444,0.016162528)
    (0.685000000,0.015542626) (0.695555556,0.014956889) (0.706111111,0.014403440) (0.716666667,0.013880525)
    (0.727222222,0.013386510) (0.737777778,0.012919862) (0.748333333,0.012479154) (0.758888889,0.012063046)
    (0.769444444,0.011670286) (0.780000000,0.011299702) (0.790555556,0.010950195) (0.801111111,0.010620735)
    (0.811666667,0.010310357) (0.822222222,0.010018155) (0.832777778,0.009743279) (0.843333333,0.009484934)
    (0.853888889,0.009242370) (0.864444444,0.009014886) (0.875000000,0.008801822) (0.885555556,0.008602559)
    (0.896111111,0.008416516) (0.906666667,0.008243147) (0.917222222,0.008081940) (0.927777778,0.007932414)
    (0.938333333,0.007794118) (0.948888889,0.007666628) (0.959444444,0.007549547) (0.970000000,0.007442503)
    (0.978409430,0.007364173) (0.984461576,0.007311511) (0.988817219,0.007275496) (0.991951913,0.007250537)
    (0.994207907,0.007233069) (0.995831514,0.007220751) (0.997000000,0.007212016) (0.997840943,0.007205797)
    (0.998446158,0.007201357) (0.998881722,0.007198179) (0.999195191,0.007195901) (0.999420791,0.007194267)
    (0.999583151,0.007193093) (0.999700000,0.007192249) (0.999784094,0.007191643) (0.999844616,0.007191207)
    (0.999888172,0.007190893) (0.999919519,0.007190668) (0.999942079,0.007190505) (0.999958315,0.007190388)
    (0.999970000,0.007190304) (1.000000000,0.007190089)
};
\addplot[measurescreened,line width=1pt] coordinates {(1,0) (1.2,0)};
\draw[measurescreened,densely dotted,thin] (axis cs:1,0) -- (axis cs:1,0.007190089);
\addplot[only marks,mark=*,mark size=0.8pt,mark options={draw=measurescreened,fill=white}] coordinates {(1,0.007190089)};
\addplot[only marks,mark=*,mark size=0.5pt,measurescreened] coordinates {(1,0)};
\end{groupplot}
\end{tikzpicture}
\endgroup
\caption{Radial profiles of the Lebesgue densities
$k_{\lambda,1}^{(d)}(r)$ of the comparison measures in
Proposition~\ref{prop:measure}, with $h=1$ and $r=|z|$.
The first row corresponds to $\lambda=0$ and the second to $\lambda=3$;
the columns correspond to $d=1,2,3$.
Arrows indicate divergence as $r \to 0^+$ for $d=2,3$.
All densities vanish for $r>1$. For $d=3$, the open circle marks
the positive left limit at $r=1$.}
\label{fig:comparison-measure-densities}
\end{figure}

\section{The sharp estimate, rigidity, and proofs of the theorems}\label{sec:proofs}
In this section we prove the additional intermediate results needed to prove Theorems \ref{thm:periodic} and \ref{thm:neumann}.
\subsection{The directional translation inequality}
We now state and prove a general result for $BV$ function on the torus. A similar statement also appears in \cite[Lemma~3.1]{Mil26} and \cite[Proposition~11]{Gal11}.

\begin{lemma}[Translation control and radial geometric defect]
	\label{lem:translation}
	Let $T$ be a periodic torus and let $U\in BV(T)$.
	Write $DU=\theta|DU|$ and define $\trn_U,\varn_U$
	as in \eqref{eq:translationdefs}. Then, for every $z\in\R^d$, we have
	\begin{equation}\label{eq:translation}
		\trn_U(z)\le\varn_U(z).
	\end{equation}
	In particular, the function $\defect_U(z):=\varn_U(z)-\trn_U(z)$
	is nonnegative and Lipschitz on $\R^d$.
	For every $e\in\mathbb S^{d-1}$, the map
	$r\mapsto\defect_U(re)$ is nondecreasing on $[0,\infty)$.
	Consequently, its spherical average
	\[
	\psi_U(r):=
	\fint_{\mathbb S^{d-1}}\defect_U(re)\,d\HH^{d-1}(e)
	\]
	is nonnegative, nondecreasing and Lipschitz on $[0,\infty)$,
	with $\psi_U(0)=0$ and
	\[
	\operatorname{Lip}(\psi_U)\le 2c_d|DU|(T),
	\qquad
	c_d:=\fint_{\mathbb S^{d-1}}|e_1|\,d\HH^{d-1}(e).
	\]
	Finally, if $d \geq 2$ and $U \in BV(T;\{-1,1\})$, then $\psi_U\in C^1((0,\infty))$.
\end{lemma}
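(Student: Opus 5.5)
The plan is to prove the inequality for smooth functions first, pass to $BV$ by mollification, and then derive the Lipschitz, monotonicity and averaging statements from elementary subadditivity properties of $\trn_U$ and $\varn_U$. The $C^1$ claim is handled separately at the end.

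\emph{Translation inequality.} For $U\in C^\infty(T)$ I write $U(x+z)-U(x)=\int_0^1\nabla U(x+tz)\cdot z\,dt$. Taking absolute values, integrating over $T$, using Fubini and the translation invariance of Lebesgue measure on the torus gives $\trn_U(z)\le\int_T|z\cdot\nabla U|\dd x$. For general $U\in BV(T)$ I mollify, $U_\varepsilon=\rho_\varepsilon*U$. Then $z\cdot\nabla U_\varepsilon=\rho_\varepsilon*(z\cdot\theta\,|DU|)$, so $\int|z\cdot\nabla U_\varepsilon|\le\int|z\cdot\theta|\dd|DU|=\varn_U(z)$. Since $U_\varepsilon\to U$ in $L^1(T)$, we have $\trn_{U_\varepsilon}(z)\to\trn_U(z)$, which yields \eqref{eq:translation}.

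\emph{Lipschitz bounds and monotonicity.} The map $z\mapsto\varn_U(z)$ is $|DU|(T)$-Lipschitz, since $\bigl||z\cdot\theta|-|z'\cdot\theta|\bigr|\le|z-z'|$. By the triangle inequality and translation invariance, $|\trn_U(z)-\trn_U(z')|\le\trn_U(z-z')\le\varn_U(z-z')\le|z-z'|\,|DU|(T)$. Hence $\defect_U$ is Lipschitz, and it is nonnegative by the first step.

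For monotonicity along rays, fix $e$ and $0\le r<s$. Positive homogeneity gives the exact additivity $\varn_U(se)=\varn_U(re)+\varn_U((s-r)e)$. Subadditivity and \eqref{eq:translation} give $\trn_U(se)\le\trn_U(re)+\trn_U((s-r)e)\le\trn_U(re)+\varn_U((s-r)e)$. Subtracting, $\defect_U(se)\ge\defect_U(re)$.

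Averaging over $e$ preserves nonnegativity and monotonicity, and $\psi_U(0)=0$ is immediate. For the constant, the radial increment of the $\varn$-part equals $(s-r)\fint|e\cdot\theta|\,d\HH^{d-1}(e)$, integrated against $|DU|$, which is $(s-r)c_d|DU|(T)$ by rotation invariance. The $\trn$-part changes by at most $\trn_U((s-r)e)\le(s-r)\varn_U(e)$, whose spherical average is again $(s-r)c_d|DU|(T)$. Together these give $\operatorname{Lip}(\psi_U)\le2c_d|DU|(T)$.

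\emph{$C^1$ regularity for binary $U$, $d\ge2$.} The averaged $\varn$-part is exactly $r\,c_d|DU|(T)$, which is linear in $r$. It therefore suffices to show that $\tau(r):=\fint\trn_U(re)\,d\HH^{d-1}(e)$ is $C^1$ on $(0,\infty)$. For binary $U$ I use $\trn_U(z)=2V-2\int_TU(x+z)U(x)\dd x$ and differentiate under the integral, in the sense of distributions and then via the Lipschitz bound. This gives
\[
\tau'(r)=-2\int_T\Phi_r(y)\cdot\theta(y)\,d|DU|(y),\qquad \Phi_r(y):=\fint_{\mathbb S^{d-1}}U(y-re)\,e\,d\HH^{d-1}(e),
\]
to be justified first for mollified $U_\varepsilon$ and then in the limit. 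Continuity of $\tau'$ then reduces to continuity of $r\mapsto\Phi_r(y)$ for $|DU|$-a.e.\ $y$, with the uniform bound $|\Phi_r|\le1$, followed by dominated convergence. For fixed $y$, $U(y-se)\to U(y-re)$ as $s\to r$ for every $e$ such that $y-re$ is a Lebesgue point of $U$. The set of bad $e$ is the radial projection of $S_U\cap\partial B_r(y)$, where $S_U$ denotes the jump set of $U$.

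\emph{Main obstacle.} The hardest step is showing that this bad set is $\HH^{d-1}$-null on the sphere for every $r>0$, not merely for a.e.\ $r$. This is where $d\ge2$ enters. The jump set is $(d-1)$-rectifiable with finite $\HH^{d-1}$ measure, so it can charge a sphere $\partial B_r(y)$ only if a positive-measure portion of $\partial^*E$ lies on that sphere. If that happens, it can do so for at most countably many pairs of radii and centers in a way that matters after integrating in $y$. My first attempt will be to handle such atoms by showing that $\int_T\Phi_r\cdot\theta\,d|DU|$ is unaffected: on a spherical piece of interface, $\theta$ is radial and the contributions from the two sides combine continuously. The fallback is a Fourier/Bessel representation, $\tau(r)=2V\sum_q|\widehat U(q)|^2\bigl(1-\Lambda_d(r|q|)\bigr)$, where $\Lambda_d$ is the normalized spherical Fourier transform. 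This should be combined with the decay of $\Lambda_d'$ and the $BV$ bound $\sum_q|q|\,|\widehat U(q)|^2\lesssim$ (a fractional-Sobolev-type estimate valid for binary $BV$ functions, $\|U\|_{H^{1/2-}}$) to obtain uniform convergence of the differentiated series on compact subsets of $(0,\infty)$.
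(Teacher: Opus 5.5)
\textbf{The $C^1$ step has a gap.} Your proofs of the translation inequality, the Lipschitz bound, monotonicity along rays and the constant $2c_d|DU|(T)$ are the paper's. One harmless slip: for $a,b\in\{-1,1\}$ one has $|a-b|=1-ab$, so $\trn_U(z)=V-\mathcal C(z)$ with $\mathcal C(z):=\int_TU(x+z)U(x)\dd x$, not $2V-2\mathcal C(z)$.

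The $C^1$ claim for binary $U$ is left open exactly where the work lies. Your derivative formula is fine once you prove it for a.e.\ $r$ and upgrade it by continuity of its right-hand side. That continuity needs the following: for each fixed $r$, the map $s\mapsto\Phi_s(y)$ is continuous at $s=r$ for $|DU|$-a.e.\ $y$. Nullity of the bad set for every $r$ at a fixed $y$ is false in general and is not needed. Also, the convergence comes from one-sided traces of $U$ on $\partial B_r(y)$, not from Lebesgue points.

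\textbf{The missing idea} is the core of the paper's argument. Fix $r>0$. Two distinct spheres $\partial B_r(x)$ and $\partial B_r(y)$ meet in an $\HH^{d-1}$-null set, and $|DU|\ll\HH^{d-1}$ is locally finite. Hence $|DU|(\partial B_r(y))>0$ for at most countably many centers $y$. Since $d\ge2$, $|DU|$ has no atoms, so $\int|DU|(\partial B_r(y))\,\dd|DU|(y)=0$. Off this null set the inner and outer traces agree, so $\Phi_s(y)\to\Phi_r(y)$, and dominated convergence with $|\Phi_s|\le1$ concludes.

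No ``atom handling'' is needed. Your heuristic that spherical interface pieces combine continuously is also wrong: if $\partial^*\{U=1\}$ contains a small cap of $\partial B_r(y)$, then $\Phi_s(y)$ genuinely jumps at $s=r$. Such $y$ are simply $|DU|$-negligible. The paper states the same fact as $|\Delta\mathcal C|(\partial B_r)=0$ for the finite measure $\Delta\mathcal C=\sum_iD_iU*D_i\check U$. This makes $\overline{\mathcal C}'(r)=(\Delta\mathcal C)(B_r)/(\omega_{d-1}r^{d-1})$ continuous.

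\textbf{The Fourier fallback fails as written, but can be repaired.} The quantity $\sum_q|q|\,|\widehat U(q)|^2$ is the $\dot H^{1/2}$ seminorm. It is infinite for every nonconstant binary $U$: since $\trn_U(re)/r\to|D_eU|(T)$, the integral $\int\trn_U(z)|z|^{-d-1}\dd z$ diverges. Once repaired, this route is a genuinely different and short proof.
\begin{itemize}
\item From $\Lambda_d'(t)=-\tfrac{t}{d}\Lambda_{d+2}(t)$ and Bessel decay, $|\Lambda_d'(t)|\lesssim\min\{t,t^{-(d-1)/2}\}$.
\item Hence for $r\ge a>0$, the $q$-th term of the differentiated series is at most $C_a|q|^{(3-d)/2}|\widehat U(q)|^2$.
\item For $d\ge3$ this is summable, because $\sum_q|\widehat U(q)|^2=1$ and $|q|\ge2\pi/\ell$ for $q\ne0$.
\item For $d=2$ you need $\sum_q|q|^{1/2}|\widehat U(q)|^2$. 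This is a constant multiple of $V^{-1}\int_{\R^2}\trn_U(z)|z|^{-5/2}\dd z$, which is finite by $\trn_U(z)\le\min\{|z|\,|DU|(T),2V\}$.
\end{itemize}
Uniform convergence on $[a,\infty)$ then gives $\psi_U\in C^1((0,\infty))$. In this route, $d\ge2$ enters through the decay of the spherical Fourier transform rather than through the absence of atoms of $|DU|$, and no measure theory on spheres is needed. The exponent count that makes it work is exactly what your sketch omits.
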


\begin{proof}
	In the proof, periodic functions and measures are identified with their periodic extensions to $\mathbb R^d$.
	For smooth periodic $U$, the fundamental theorem of calculus gives
	\[
	|U(x+z)-U(x)|
	\le\int_0^1|z\cdot\nabla U(x+tz)|\,dt.
	\]
	Integration over the torus and translation invariance prove
	\eqref{eq:translation}.
	Given a general $BV$ function, let
	$U_\varepsilon=\rho_\varepsilon*U$ be a periodic mollification.
	Then
	\[
	\int_T|z\cdot\nabla U_\varepsilon|\,dx
	\le |z\cdot DU|(T)=\varn_U(z),
	\]
	whereas $|\trn_{U_\varepsilon}(z)-\trn_U(z)|\le2\|U_\varepsilon-U\|_{L^1(T)}$.
	Passing to the limit proves \eqref{eq:translation}.
	
	The triangle inequality and translation invariance imply
	\[
	|\trn_U(z)-\trn_U(z')|
	\le\trn_U(z-z')
	\le\varn_U(z-z')
	\le |z-z'|\,|DU|(T).
	\]
	Together with
	\[
	|\varn_U(z)-\varn_U(z')|
	\le |z-z'|\,|DU|(T),
	\]
	this proves that $\defect_U$ is Lipschitz.
	
	Fix $e\in\mathbb S^{d-1}$ and $0\le r\le s$.
	The same argument gives
	\[
	\trn_U(se)-\trn_U(re)
	\le\trn_U((s-r)e)
	\le(s-r)|D_eU|(T).
	\]
	Since $\varn_U(re)=r|D_eU|(T)$, it follows that
	\[
	0\le\defect_U(se)-\defect_U(re)
	\le2(s-r)|D_eU|(T).
	\]
	Averaging over $e$ proves monotonicity of $\psi_U$ and the
	Lipschitz bound. Indeed, the polar decomposition and
	rotational invariance give
	\[
	\fint_{\mathbb S^{d-1}}|D_eU|(T)\,d\HH^{d-1}(e)
	=
	\int_T
	\fint_{\mathbb S^{d-1}}|e\cdot\theta(x)|
	\,d\HH^{d-1}(e)\,d|DU|(x)
	=c_d|DU|(T).
	\]
	
	It remains to prove $C^1$ regularity of $\psi_U$ when $U \in BV(T;\{-1,1\})$. For $z \in \R^d$ and $r>0$ set
	\[
	\mathcal{C}(z):=\int_T U(x+z)U(x) \, dx, \qquad \mbox{and} \qquad \overline{\mathcal{C}}(r):= \fint_{\mathbb S^{d-1}} \mathcal{C}(re) \, d\HH^{d-1}(e).
	\]
	Observe that using the identity $|U(x+z)-U(x)|=1-U(x+z)U(x)$ we get for every $r>0$
	\[
	\psi_U(r)=c_d r |DU|(T)-|T|+\overline{\mathcal{C}}(r).
	\]
	Thus, it is enough to prove that $\overline{\mathcal{C}}'(r) \in C((0,\infty))$. Assume temporarily that $\mathcal{C}$ is smooth. Then, using the divergence theorem we would have
	\begin{equation}\label{eq:der}
		\frac{d}{dr} \fint_{\mathbb S^{d-1}} \mathcal{C}(re) \, d\HH^{d-1}(e)=\frac{1}{\omega_{d-1}r^{d-1}} \int_{\partial B_r} \nabla \mathcal{C} \cdot \nu_{B_r} \, d\HH^{d-1}(x)=\frac{1}{\omega_{d-1}r^{d-1}} \int_{B_r} \Delta \mathcal{C}(z) \, dz.
	\end{equation}
	That is,
	\[
	\overline{\mathcal{C}}'(r)=\frac{(\Delta \mathcal{C})(B_r)}{\omega_{d-1}r^{d-1}} \qquad \mbox{for $r>0$}.
	\]
	Therefore, if we prove that $r \mapsto (\Delta \mathcal{C})(B_r)$ is continuous on $(0,\infty)$, we can justify equality \eqref{eq:der} using approximations with a mollifier and the identity above would imply the continuity of $\overline{\mathcal{C}}'$. 
	To this aim, let us set $\check{U}(x):=U(-x)$. We have $\mathcal{C}=U * \check{U}$ and thus $\Delta \mathcal{C}=\sum_{i=1}^d D_i U * D_i \check{U}$. In particular, $\Delta \mathcal{C}$ is a signed finite measure on $T$.
	Moreover, being $U \in BV(T;\{-1,1\})$, we have that $|DU|$ vanishes on sets of zero $\HH^{d-1}$-measure. In particular, for every $r>0$ and $x \neq y$ we get $|DU|(\partial B_r(x) \cap \partial B_r(y))=0$. This means that $|DU|(\partial B_r(x))>0$ only for a countable number of centers $x \in \R^d$, a $|DU|$-null set. Thus, for every $r >0$,
	\[
	\int_T |DU|(\partial B_r(x)) \, d|DU|(x)=0,
	\]
	giving $|\Delta \mathcal{C}|(\partial B_r)=0$. This implies the continuity of $r \mapsto (\Delta \mathcal{C})(B_r)$ on $(0,\infty)$. 
	Let now $\rho_\varepsilon$ be a nonnegative, radial standard
	mollifier and set $\mathcal{C}_\varepsilon:=\rho_\varepsilon*\mathcal{C}$, and $\overline{\mathcal{C}}_\varepsilon$ its spherical average.
	Since $\mathcal{C}$ is Lipschitz, $\overline{\mathcal{C}}_\varepsilon$ converges
	uniformly to $\overline{\mathcal{C}}$. Using \eqref{eq:der} since $\mathcal{C}_\varepsilon$ is smooth,
	\[
	\overline{\mathcal{C}}_\varepsilon'(r)
	=
	\frac{1}{\omega_{d-1}r^{d-1}}
	\int_{B_r}(\rho_\varepsilon*(\Delta \mathcal{C}))(x)\,dx.
	\]
	For every $r>0$, by dominated convergence we have that $(\rho_\varepsilon*(\Delta \mathcal{C}))(B_r)$ converges to $(\Delta \mathcal{C})(B_r)$ since $\partial B_r$ is $|\Delta \mathcal{C}|$-negligible.
	
	Fix $a \in (0,\infty)$. For $r \in (a,\infty)$, integrating from $a$ to $r$ and passing to the limit in $\varepsilon \to 0$ by dominated
	convergence yields
	\[
	\overline{\mathcal{C}}(r)-\overline{\mathcal{C}}(a)
	=
	\int_a^r
	\frac{(\Delta \mathcal{C})(B_t)}{\omega_{d-1}t^{d-1}}\,dt.
	\]
	Hence $\overline{\mathcal{C}}\in C^1((0,\infty))$.
\end{proof}

\subsection{The global comparison}
We are ready to show the sharp global comparison inequality which will entail Theorems \ref{thm:periodic} and \ref{thm:neumann}.

\begin{proposition}[Sharp comparison on the torus]
	\label{prop:comparison}
	Let $1\le d\le7$, $\lambda\ge0$, $h>0$, and let
	$T=\T_\ell^d$ have volume $V=\ell^d$.
	For every $U\in\A_\lambda(T)$,
	\begin{equation}\label{eq:comparison}
		N_\lambda^{\mathrm{per}}(U)+j_\lambda(h)|DU|(T)
		\ge\sigma_\lambda(h)V.
	\end{equation}
	More precisely, let $\mu_{\lambda,h}^{(d)}$ be given by
	Proposition~\ref{prop:measure}, and set
	\[
	\psi_U(r):=
	\fint_{\mathbb S^{d-1}}\defect_U(re)\,d\HH^{d-1}(e).
	\]
	Then,
	\begin{equation}\label{eq:exactdefect}
			N_\lambda^{\mathrm{per}}(U)
			+j_\lambda(h)|DU|(T)-\sigma_\lambda(h)V=
			\big\langle\mu_{\lambda,h}^{(d)},\psi_U(|\cdot|)\big\rangle
			+V\sum_{q\in(2\pi/\ell)\Z^d}
			\frac{1+\cos(h|q|)}
			{(1+\cosh(\lambda h))(\lambda^2+|q|^2)}
			|\widehat U(q)|^2.
	\end{equation}
	For $\lambda=0$, the sum omits $q=0$.
	Both terms on the right-hand side are nonnegative.
	
	For $d\le5$, the first term agrees with
	\[
	\int_{\R^d}\defect_U(z)\,d\mu_{\lambda,h}^{(d)}(z).
	\]
	For $d=6,7$, it is the extended radial pairing defined
	in Proposition~\ref{prop:measure}.
\end{proposition}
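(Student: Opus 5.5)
The plan is to derive the exact identity \eqref{eq:exactdefect} by splitting the nonlocal energy in Fourier space into the part seen by $\mu:=\mu_{\lambda,h}^{(d)}$ and a remainder. The inequality \eqref{eq:comparison} then follows from nonnegativity of the two terms on the right.

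\emph{Spectral split.} Write $N_\lambda^{\mathrm{per}}(U)=V\sum_q(\lambda^2+|q|^2)^{-1}|\widehat U(q)|^2$, omitting $q=0$ when $\lambda=0$; there $\mean U=0$ removes the constant mode. By \eqref{eq:measureFT},
\[
\frac1{\lambda^2+|q|^2}-\widehat\mu(q)=\frac{1+\cos(h|q|)}{(1+\Cosh)(\lambda^2+|q|^2)}\ge0 .
\]
Hence $N_\lambda^{\mathrm{per}}(U)=V\sum_q\widehat\mu(q)|\widehat U(q)|^2+(\text{spectral term})$, and the spectral term is a nonnegative series. For $\lambda=0$ the omitted mode contributes nothing, because $\widehat U(0)=0$. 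It remains to identify $V\sum_q\widehat\mu(q)|\widehat U(q)|^2$ with $\langle\mu,\mathcal C\rangle$, where $\mathcal C(z)=\int_TU(x+z)U(x)\dd x=\sum_qV|\widehat U(q)|^2e^{iq\cdot z}$.

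\emph{Identification with the geometric defect.} For $d\le5$, $\mu$ is a finite even measure, so the Parseval identity from the preliminaries gives $V\sum_q\widehat\mu(q)|\widehat U(q)|^2=\langle U,\mu*U\rangle=\int\mathcal C\dd\mu$. The pointwise identity $\mathcal C=V-\trn_U$ and $\varn_U(z)=\trn_U(z)+\defect_U(z)$ then give
\[
\int\mathcal C\dd\mu=\sigma_\lambda(h)V-\int\varn_U\dd\mu+\int\defect_U\dd\mu .
\]
By Fubini and \eqref{eq:measuremoments}, $\int\varn_U\dd\mu=\int_T\int|z\cdot\theta(x)|\dd\mu(z)\dd|DU|(x)=j_\lambda(h)|DU|(T)$. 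Radiality of $\mu$ lets us replace $\defect_U$ by its spherical average $\psi_U(|z|)$. Nonnegativity of this term comes from Lemma~\ref{lem:translation} and positivity of $\mu$.

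For $d=6,7$ all of this has to be done at the level of radial pairings. Since $\mu$ is radial, $\langle\mu,\mathcal C\rangle=\langle\mu,\overline{\mathcal C}(|\cdot|)\rangle$. I would use $\overline{\mathcal C}(r)=\psi_U(r)-c_dr|DU|(T)+V$, valid for $r\ge0$, to split
\[
\langle\mu,\overline{\mathcal C}\rangle=\langle\mu,\psi_U\rangle-c_d|DU|(T)\langle\mu,|\cdot|\rangle+V\langle\mu,1\rangle .
\]
Proposition~\ref{prop:measure} gives $\langle\mu,1\rangle=\sigma_\lambda(h)$ and $c_d\langle\mu,|\cdot|\rangle=j_\lambda(h)$, so the last two terms equal $V\sigma_\lambda(h)-j_\lambda(h)|DU|(T)$. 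All three profiles belong to the admissible class $\operatorname{Lip}\cap C^1((0,\infty))$ by Lemma~\ref{lem:translation}, so the extended pairing \eqref{eq:liftedradialpairing} applies and is linear. Nonnegativity of $\langle\mu,\psi_U\rangle$ follows because $\psi_U$ is nonnegative and nondecreasing.

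\emph{Main obstacle.} The delicate step is justifying $V\sum_q\widehat\mu(q)|\widehat U(q)|^2=\langle\mu,\overline{\mathcal C}(|\cdot|)\rangle$ when $\mu$ is only an order-one distribution and $\mathcal C$ is merely Lipschitz. My first attempt would use mollification. Replace $U$ by $U_\varepsilon=\rho_\varepsilon*U$ with $\rho_\varepsilon$ radial, so that $\mathcal C_\varepsilon=\rho_\varepsilon*\check\rho_\varepsilon*\mathcal C$ is smooth. Multiplying by a cutoff equal to one near $\overline B_h$, the identity follows from the definition of the Fourier transform of a compactly supported distribution, since the Fourier series converges in $C^\infty$. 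As $\varepsilon\to0$, the left side converges by dominated convergence, using $|\widehat\mu|\le C(1+|q|^2)^{-1}$ and $|\widehat{\rho_\varepsilon}|\le1$; this convergence must hold in absolute terms, since $\sum_q|\widehat U(q)|^2<\infty$. For the right side I need the spherical averages $\overline{\mathcal C}_\varepsilon$ to converge to $\overline{\mathcal C}$ in the norm $\|\psi\|_\infty+\|r\psi'\|_\infty$ on $[0,h+\delta]$, which is the norm controlling the extended pairing. Uniform convergence follows from the Lipschitz bound. The derivative requires the representation $\overline{\mathcal C}'(r)=(\Delta\mathcal C)(B_r)/(\omega_{d-1}r^{d-1})$ from the proof of Lemma~\ref{lem:translation}. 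That representation gives only pointwise convergence, so if uniform convergence fails I would instead pass to the limit in the explicit formula \eqref{eq:liftedradialpairing}. That formula is an integral against the finite measure $\mu^{(d-2)}$ with bounded integrands $r\,\overline{\mathcal C}_\varepsilon'(r)$, and dominated convergence applies there.
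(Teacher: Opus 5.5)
Your proposal is correct and follows essentially the paper's route. It uses the spectral split via \eqref{eq:measureFT}, the binary identity $\overline{\mathcal C}_U(r)=\psi_U(r)-c_dr|DU|(T)+V$ together with the mass and moment identities, and, for $d=6,7$, mollification followed by dominated convergence against the finite measure $\mu_{\lambda,h}^{(d-2)}$ in \eqref{eq:liftedradialpairing}, which is exactly the paper's limiting step. The differences are cosmetic: you invoke the finite-measure Parseval identity directly for $d\le5$, and you get pointwise convergence of $\overline{\mathcal C}_\varepsilon'$ from the representation $(\Delta\mathcal C)(B_r)/(\omega_{d-1}r^{d-1})$, whereas the paper uses $C^1$ regularity of $\overline{\mathcal C}_U$ and the fact that radial mollification commutes with spherical averaging.
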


\begin{proof}
	Write $\mu:=\mu_{\lambda,h}^{(d)}$ and define
	\[
	Q_\mu(U):=
	V\sum_{q\in(2\pi/\ell)\Z^d}
	\widehat\mu(q)|\widehat U(q)|^2.
	\]
	This series converges absolutely. Indeed,
	$\widehat\mu$ has the same radial profile as the Fourier
	transform of the finite positive one-dimensional measure by \eqref{eq:measureFT},
	and therefore $|\widehat\mu(q)|\le\sigma_\lambda(h)$.
	
	We first identify $Q_\mu(U)$ with a radial pairing.
	Set as before
	\[
	\mathcal C_U(z):=\int_T U(x+z)U(x)\,dx,
	\qquad
	\overline{\mathcal C}_U(r):=
	\fint_{\mathbb S^{d-1}}\mathcal C_U(re)
	\,d\HH^{d-1}(e).
	\]
	Since $U$ takes values in $\{-1,1\}$,
	\begin{equation}\label{eq:binaryidentity}
		\psi_U(r)
		=c_dr|DU|(T)-V+\overline{\mathcal C}_U(r).
	\end{equation}
	Reasoning as in the proof of Lemma \ref{lem:translation} we have that
	$\mathcal C_U$ is Lipschitz. More precisely, for every $e\in\mathbb S^{d-1}$
	\begin{equation}\label{eq:correlationtranslation}
		|\mathcal C_U(z+re)-\mathcal C_U(z)|
		\le |r|\,|D_eU|(T).
	\end{equation}
	Moreover, by Lemma~\ref{lem:translation} and
	\eqref{eq:binaryidentity},
	$\overline{\mathcal C}_U\in C^1((0,\infty))$ when
	$d=6,7$.
	
	Let $\rho_\varepsilon$ be a nonnegative radial smooth
	mollifier on $\R^d$. Set
	$\mathcal C_{U,\varepsilon}:=\rho_\varepsilon*\mathcal C_U$,
	using the periodic extension of $\mathcal C_U$.
	Denote its spherical average again by
	$\overline{\mathcal C}_{U,\varepsilon}$.
	Orthogonal invariance of $\mu$ and the Fourier expansion
	of the smooth periodic function
	$\mathcal C_{U,\varepsilon}$ give
	\[
		\big\langle\mu,
		\overline{\mathcal C}_{U,\varepsilon}(|\cdot|)
		\big\rangle
		=
		\langle\mu,\mathcal C_{U,\varepsilon}\rangle
		=
		V\sum_{q\in(2\pi/\ell)\Z^d}
		\widehat\mu(q)\widehat\rho_\varepsilon(q)
		|\widehat U(q)|^2.
	\]
	Since $|\widehat\rho_\varepsilon|\le1$ and
	$\widehat\rho_\varepsilon(q)\to1$ as $\varepsilon \to 0^+$, dominated convergence
	shows that the right-hand side tends to $Q_\mu(U)$.
	
	We now pass to the limit on the left-hand side.
	If $d\le5$, this follows from uniform convergence of the
	spherical averages and finiteness of $\mu$.
	For $d=6,7$, radial convolution commutes with spherical
	averaging. Consequently, $\overline{\mathcal C}_{U,\varepsilon}$ converges to $\overline{\mathcal C}_U$ in $C^1_{\mathrm{loc}}((0,\infty))$,
	because the radial function
	$z\mapsto\overline{\mathcal C}_U(|z|)$ is $C^1$
	away from the origin.
	Furthermore, convolution preserves
	\eqref{eq:correlationtranslation}, so
	\[
	|\overline{\mathcal C}_{U,\varepsilon}(r)|\le V,
	\qquad
	|\overline{\mathcal C}_{U,\varepsilon}'(r)|
	\le c_d|DU|(T).
	\]
	The extended pairing formula therefore gives
	\[
	\begin{aligned}
		&\big\langle\mu,
		\overline{\mathcal C}_{U,\varepsilon}(|\cdot|)
		\big\rangle
		=
		\int_{\R^{d-2}}
		\left[
		\overline{\mathcal C}_{U,\varepsilon}(|z|)
		+\frac{|z|}{d-2}
		\overline{\mathcal C}_{U,\varepsilon}'(|z|)
		\right]\,d\mu_{\lambda,h}^{(d-2)}(z).
	\end{aligned}
	\]
	The preceding bounds permit dominated convergence against
	the finite measure $\mu_{\lambda,h}^{(d-2)}$.
	We have therefore proved, in every dimension $1 \leq d \leq 7$, that
	\begin{equation}\label{eq:radialcorrelationpairing}
		Q_\mu(U)
		=
		\big\langle\mu,\overline{\mathcal C}_U(|\cdot|)\big\rangle.
	\end{equation}
	By the mass and radial first-moment identities in
	Proposition~\ref{prop:measure},
	\[
	\langle\mu,1\rangle=\sigma_\lambda(h),
	\qquad
	c_d\langle\mu,|\cdot|\rangle=j_\lambda(h).
	\]
	Combining these identities with
	\eqref{eq:binaryidentity} and
	\eqref{eq:radialcorrelationpairing} yields
	\begin{equation}\label{eq:geometricdefect}
		Q_\mu(U)+j_\lambda(h)|DU|(T)-\sigma_\lambda(h)V
		=
		\big\langle\mu,\psi_U(|\cdot|)\big\rangle
		\ge0.
	\end{equation}
	Indeed, for $d\le5$ this follows from positivity of $\mu$
	and nonnegativity of $\psi_U$.
	For $d=6,7$, Lemma~\ref{lem:translation} shows that
	$\psi_U$ is nonnegative, nondecreasing, Lipschitz,
	and $C^1$ away from zero, so the positivity assertion
	of Proposition~\ref{prop:measure} applies.
	For $d\le5$, rotational invariance also gives
	\[
	\big\langle\mu,\psi_U(|\cdot|)\big\rangle
	=\int_{\R^d}\defect_U(z)\,d\mu(z).
	\]
	
	Finally, for $\lambda>0$,
	\[
	N_\lambda^{\mathrm{per}}(U)
	=
	V\sum_{q\in(2\pi/\ell)\Z^d}
	\frac{|\widehat U(q)|^2}{\lambda^2+|q|^2}.
	\]
	For $\lambda=0$, the same formula holds with $q\ne0$,
	and $\widehat U(0)=0$ by the admissibility condition.
	By \eqref{eq:measureFT},
	\begin{equation}\label{eq:spectralgap}
		\frac1{\lambda^2+|q|^2}-\widehat\mu(q)
		=
		\frac{1+\cos(h|q|)}
		{(1+\cosh(\lambda h))(\lambda^2+|q|^2)}
		\ge0,
	\end{equation}
	with $q\ne0$ when $\lambda=0$.
	Subtracting $Q_\mu(U)$ from the Fourier representation of
	$N_\lambda^{\mathrm{per}}(U)$ and using
	\eqref{eq:geometricdefect} proves
	\eqref{eq:exactdefect} and \eqref{eq:comparison}.
\end{proof}

The next result entails that if $\lambda>0$ then equality forces $\mean U=0$.
\begin{corollary}[The screened constant mode]\label{cor:mean}
If $\lambda>0$, then
\begin{equation}\label{eq:meanpenalty}
 N_\lambda^{\mathrm{per}}(U)+j_\lambda(h)|DU|(T)
 -\sigma_\lambda(h)V
 \ge\frac{V}{\lambda^2}\sech^2\left(\frac{\lambda h}2\right)\,|\mean U|^2.
\end{equation}
In particular, equality in \eqref{eq:comparison} forces $\mean U=0$.
\end{corollary}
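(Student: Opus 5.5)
The plan is to read \eqref{eq:meanpenalty} off the exact defect identity \eqref{eq:exactdefect} of Proposition~\ref{prop:comparison}. Both terms on its right-hand side are nonnegative, so we drop the geometric pairing $\langle\mu_{\lambda,h}^{(d)},\psi_U(|\cdot|)\rangle$. In the spectral sum we keep only the $q=0$ mode, since every other summand is nonnegative. Because $\lambda>0$, the constant mode is present in the sum.

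At $q=0$ we have $\widehat U(0)=\mean U$ and $\cos(h|q|)=1$. The retained term is therefore
\[
V\,\frac{2}{(1+\cosh(\lambda h))\lambda^2}\,|\mean U|^2 .
\]
Using $1+\cosh(\lambda h)=2\cosh^2(\lambda h/2)$, this equals $\frac{V}{\lambda^2}\sech^2(\lambda h/2)|\mean U|^2$, which is exactly \eqref{eq:meanpenalty}.

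For the final claim, equality in \eqref{eq:comparison} means the left-hand side of \eqref{eq:meanpenalty} vanishes. The right-hand side is then at most zero. Since $\sech^2>0$ and $V,\lambda^2>0$, this forces $\mean U=0$.

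There is no real obstacle here. The only point needing care is that the $q=0$ term is genuinely included in \eqref{eq:exactdefect} when $\lambda>0$, and this is stated explicitly there: only the $\lambda=0$ case omits it.
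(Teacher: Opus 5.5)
Your proposal is correct and follows the paper's proof exactly: keep only the $q=0$ summand of the spectral term in \eqref{eq:exactdefect}, drop the nonnegative geometric pairing and the other nonnegative summands, and evaluate the coefficient $2/\bigl((1+\cosh(\lambda h))\lambda^2\bigr)=\lambda^{-2}\sech^2(\lambda h/2)>0$. The conclusion $\mean U=0$ in the equality case then follows immediately, as you say.
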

\begin{proof}
Retaining the $q=0$ term in \eqref{eq:exactdefect}, its coefficient is
\[
 \frac{2}{(1+\cosh(\lambda h))\lambda^2}
 =\lambda^{-2}\sech^2\left(\frac{\lambda h}2\right)>0.
\]
\end{proof}

\subsection{Rigidity of zero translation defect}
In this subsection, we prove that if
$U\in BV(\T_\ell^d;\{-1,1\})$ and its translation defect
$\defect_U$ vanishes in a neighborhood of the origin,
then $U$ is one-dimensional.

\begin{lemma}[Zero translation defect implies parallel layers]\label{lem:rigidity}
Let $U\in BV(\T_\ell^d;\{-1,1\})$ be non-constant up to $\mathcal{L}^d$-null sets, and let $h>0$. Suppose
\begin{equation}\label{eq:zerodefect}
 \defect_U(z)=0\qquad\text{for $\mathcal{L}^d$-almost every }|z|<h.
\end{equation}
For the periodic lift to $\R^d$, let $E_+$ and $E_-$ be the density-one representatives of $\{U=1\}$ and $\{U=-1\}$, respectively. Then $E_+$ and $E_-$ are open, and their common boundary is a locally finite family of complete parallel affine hyperplanes. Consecutive planes have distance at least $h$. In particular,
\begin{equation}\label{eq:onedimensional}
 U(x)=g(\nu\cdot x)\quad\text{almost everywhere}
\end{equation}
for some unit vector $\nu$ and $g \in BV_{\mathrm{loc}}(\R;\{-1,1\})$ whose values alternate across the intervals.
\end{lemma}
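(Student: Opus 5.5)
The plan is to convert the vanishing of $\defect_U$ into a one-dimensional statement on almost every line, then into local convexity of both phases, and finally to read off the global hyperplane structure.

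\emph{Step 1: from a.e.\ to every $z$.} By Lemma~\ref{lem:translation}, $\defect_U$ is continuous and nonnegative. Hence \eqref{eq:zerodefect} upgrades to $\defect_U(z)=0$ for every $|z|<h$.

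\emph{Step 2: slicing.} Fix $e\in\mathbb S^{d-1}$ and $0<r<h$. By the slicing theory of $BV$ functions, for $\HH^{d-1}$-a.e.\ $y\in e^\perp$ the slice $u_y(t):=U(y+te)$ is a binary $BV_{\mathrm{loc}}$ function with finitely many jumps per period. Moreover,
\[
\varn_U(re)=r\int |Du_y|/2\,dy\cdot 2=\int r\,\#\{\text{jumps of }u_y\}\cdot 2\,dy
\]
(normalised appropriately), and $\trn_U(re)=\int\!\!\int|u_y(t+r)-u_y(t)|\,dt\,dy$.

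For a one-dimensional binary function $g$, one has $\int|g(t+r)-g(t)|\,dt\le 2r\,\#\{\text{jumps}\}$ on a period. Equality holds if and only if consecutive jumps are at distance at least $r$: two jumps in a window of length $r$ make the endpoints agree and lose mass.

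Since the slice inequality holds pointwise in $y$, equality of the integrals forces equality on a.e.\ slice. Letting $r\uparrow h$ along a sequence, I conclude the following: for every direction $e$ and a.e.\ line parallel to $e$, consecutive jumps of the slice are at distance $\ge h$. Using Fubini in the direction variable as well, this holds for a.e.\ line in the space of lines.

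\emph{Step 3: local convexity.} Fix $x_0$ and the ball $B:=B_{h/2}(x_0)$. Any segment in $B$ has length $<h$, so a.e.\ line meets the interface at most once inside $B$.

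I claim $E_+\cap B$ and $E_-\cap B$ are convex. Take $p,q\in E_+\cap B$ (density-one points) and suppose the segment $[p,q]$ contains a density point of $E_-$. Perturb $p,q$ and the middle point within small balls where the respective densities are close to one. By Fubini over the family of nearby segments, a positive-measure set of lines then has slices equal to $+1,-1,+1$ inside $B$. Such a slice has two jumps at distance $<h$, a contradiction.

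Thus $E_+\cap B$, which is open by the density-one representative together with this argument, is convex up to null sets, and so is $E_-\cap B$. Two disjoint convex sets whose union has full measure in $B$ are separated by a hyperplane; if both are nonempty, they are the two open half-balls cut by that hyperplane. Consequently $E_\pm$ are open, and $\partial E_+=\partial E_-$ is, in every ball of radius $h/2$, either empty or a single flat disc through it.

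\emph{Step 4: global structure.} The common boundary $\Sigma$ is therefore a closed embedded hypersurface with locally constant unit normal. Each connected component is an open and closed subset of the affine hyperplane it locally spans, hence a complete hyperplane. Two components are disjoint, since a ball around an intersection point would contain a non-flat boundary, contradicting Step 3; hence all components are parallel, with common normal $\nu$. Along a line with direction $\nu$, consecutive planes give consecutive jumps, so their distance is $\ge h$ by Step 2. Periodicity and $|DU|(T)<\infty$ give local finiteness and $\nu$ rational with respect to the lattice. Then $U$ is constant between consecutive planes and changes sign across each, which yields \eqref{eq:onedimensional} with $g$ alternating. Since $U$ is nonconstant, $\Sigma\neq\emptyset$.

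\emph{Main obstacle.} The delicate step is Step 3: passing from the ``a.e.\ line'' information to genuine convexity of the density-one representatives. This requires a careful Fubini argument over families of segments near a given triple of density points, and I would do it first. The remaining steps are either standard $BV$ slicing or elementary geometry.
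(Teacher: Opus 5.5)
Your outline is correct. Steps 1, 2 and 4 follow the paper: slicing and the one-dimensional jump count first, then the connected components of $\supp|DU|$ as complete parallel hyperplanes at distance $\ge h$.

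The genuine difference is Step 3. The paper extracts from Step 2 only the qualitative fact that both phases have interval sections in balls of diameter $<h$. It then proves the general Lemma~\ref{lem:euclidean-monotone}, using the Blaschke--Petkantschin formula, iterated barycentric interpolation up to $d$-simplices, a Fatou argument for limiting simplices, and Carath\'eodory's theorem. That yields convexity up to null sets, and separation finishes.

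You instead use the quantitative information (no two jumps closer than $h$ on a.e.\ line) directly at density points. This is more elementary and gives more: openness of $E_\pm$ and genuine convexity of $E_\pm\cap B$. The price is a careful local averaging, and three points you compress should be made explicit.

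\emph{Collinear perturbation.} You cannot perturb $p$, $q$ and the middle point independently, since collinearity is lost. Instead, perturb $(p',q')\in B_\varepsilon(p)\times B_\varepsilon(q)$ and the parameter $t$ in windows near $0$, $t_0$ and $1$. For fixed $t$, the law of $(1-t)p'+tq'$ has density at most $\max\{t,1-t\}^{-d}|B_\varepsilon|^{-1}$. Hence, by the density hypotheses, each window has the wrong sign only for a small fraction of $(p',q',t)$. Then pass from a positive-measure set of pairs to a positive-measure set of lines via $dp'\,dq'=s^{d-1}\,ds\,d\HH^{d-1}(e)\,d\HH^{d-1}(y)\,d\tau$.

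\emph{Openness before convexity.} The resulting segment property only gives $[p,q]\cap E_-=\emptyset$, not $[p,q]\subset E_+$. The null set $\R^d\setminus(E_+\cup E_-)$ may meet segments, so ``convex up to null sets'' does not follow by itself, and openness must be proved first. One way: take $\rho<h$ with $|B_\rho(p)\setminus E_+|$ small. Every $x\in B_{\rho/2}(p)$ is then the midpoint of some pair $y,\,2x-y\in E_+$, so $B_{\rho/2}(p)\cap E_-=\emptyset$ and hence $B_{\rho/2}(p)\subset E_+$. Once $E_\pm$ are open, translating $[p,q]$ by small vectors shows that each point of $[p,q]$ has a neighbourhood missing $E_-$, so $[p,q]\subset E_+$.

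\emph{Non-periodic slices in Step 2.} Slices in irrational directions are not periodic, so ``on a period'' must be replaced by the pointwise bound $|U(x+re)-U(x)|\le 2\Lambda_{e,r}(x)$ integrated over a fundamental cell, as the paper does.
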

\begin{proof}
	Set $T:=\T_\ell^d$, and also denote by $U$ its periodic
	lift to $\R^d$. By continuity of $\defect_U$ given by Lemma \ref{lem:translation}, we get that \eqref{eq:zerodefect} holds for every $|z|<h$.
	
	\noindent
	\emph{Step 1: Short segments contain at most one jump.}
	Fix $e\in\mathbb S^{d-1}$ and $r\in(0,h)$.
	By \eqref{eq:zerodefect},
	\[
	\trn_U(re)=\varn_U(re)
	=r\int_T|e\cdot\theta|\,d|DU|
	=r\,|D_eU|(T).
	\]
	By the slicing theorem for $BV$ functions (see \cite[section 3.11]{AFP}), for $\HH^{d-1}$-almost every
	$y\in e^\perp$, the function
	\[
	f_y(s):=U(y+se)
	\]
	belongs to $BV_{\mathrm{loc}}(\R;\{-1,1\})$.
	It is therefore piecewise constant, with locally finitely
	many jumps, each of size two. Define
	\[
	\Lambda_{e,r}(y+se):=\frac12|Df_y|((s,s+r)).
	\]
	By definition $\Lambda_{e,r}(x)$ counts the jumps along the open segment
	from $x$ to $x+re$, and
	\[
	|U(x+re)-U(x)|\le 2\Lambda_{e,r}(x)
	\qquad\text{for almost every }x.
	\]
	Let $F:=[0,\ell)^d$ be a half-open fundamental cell.
	Slicing and Tonelli's theorem give
	\[
	\begin{aligned}
		2\int_F \Lambda_{e,r}(x)\,dx
		&=\int_{e^\perp}\int_{\R}
		\one_F(y+se)
		\int_{(s,s+r)}d|Df_y|(t)\,ds\,d\HH^{d-1}(y)\\
		&=\int_0^r\int_{\R^d}
		\one_F(z-\tau e)\,d|D_eU|(z)\,d\tau
		=\int_0^r|D_eU|(F+\tau e)\,d\tau\\
		&=r\,|D_eU|(T),
	\end{aligned}
	\]
	where the last equality follows from periodicity, since every
	translate of $F$ is a fundamental cell for the periodic
	measure $|D_eU|$.
	Therefore,
	\[
	\begin{aligned}
		\int_F
		\bigl(2\Lambda_{e,r}(x)-|U(x+re)-U(x)|\bigr)\,dx
		=\varn_U(re)-\trn_U(re)
		=\defect_U(re)=0.
	\end{aligned}
	\]
	Since the integrand is nonnegative, we infer that
	\[
	2\Lambda_{e,r}(x)=|U(x+re)-U(x)|\le2
	\qquad\text{for $\mathcal{L}^d$-almost every }x.
	\]
	Hence $\Lambda_{e,r}\le1$ $\mathcal{L}^d$-almost everywhere. By periodicity and Fubini's theorem, for
	$\HH^{d-1}$-almost every $y\in e^\perp$ and
	$\HH^1$-almost every $s\in\R$, the restriction
	$t\mapsto U(y+te)$ has at most one jump in $(s,s+r)$.
	
	\noindent
	\emph{Step 2: Local half-space structure.}
	Let $B\subset\R^d$ be any open ball with $\diam B<h$,
	and choose $r\in(\diam B,h)$.
	For every direction $e$, Step~1 implies that $\HH^{d-1}$-almost every
	line parallel to $e$ contains at most one jump inside $B$.
	Thus both phases have interval sections in $B$, modulo
	one-dimensional null sets.
	Applying Lemma~\ref{lem:euclidean-monotone} to
	$B\cap\{U=1\}$, we conclude that $U$ is either constant
	almost everywhere in $B$, or
	\[
	U=2\one_H-1\qquad\text{almost everywhere in }B
	\]
	for a half-space $H$ whose boundary meets $B$.
	In the nonconstant case, the density-one representatives
	satisfy
	\[
	E_+\cap B=B\cap\operatorname{int}H,
	\qquad
	E_-\cap B=B\cap\operatorname{int}(H^c).
	\]
	In the constant case, one representative fills $B$ and
	the other is empty. 
	In particular, setting
	\(\Gamma:=\supp|DU|,\)
	we infer that for every $p\in\Gamma$ and every
	$0<\rho<h/2$, there is an affine hyperplane $P_p$
	through $p$ such that \(\Gamma\cap B_\rho(p)=P_p\cap B_\rho(p)\).
	Since balls of diameter less than $h$ cover $\mathbb R^d$, the local description shows that $E_+$ and $E_-$ are open and that $\partial E_+=\partial E_-=\Gamma$.
	
	\noindent
	\emph{Step 3: Complete parallel hyperplanes and their spacing.}
	The local supporting hyperplane is unique and locally
	constant along $\Gamma$. Indeed, if $q\in\Gamma\cap B_\rho(p)$, then $q\in P_p$, so $P_p\cap B_\rho(p)\cap B_\rho(q)$ is a relatively open neighborhood of $q$ in $P_p$. It is contained in $P_q$, hence $P_p=P_q$.
	Consequently, each connected component $C$ of $\Gamma$
	lies in a single affine hyperplane $P$.
	The local description makes $C$ relatively open in $P$.
	It is also relatively closed in $P$, because connected
	components of the closed set $\Gamma$ are closed.
	Since $P$ is connected, $C=P$.
	Thus $\Gamma$ is a union of complete affine hyperplanes.
	Distinct such hyperplanes are disjoint, since they belong to different connected components of $\Gamma$, and hence parallel.
	
	Their mutual distances are at least $h$.
	Indeed, if two planes had distance $a<h$, a ball centered
	halfway between them, with radius in $(a/2,h/2)$, would
	meet both. This contradicts Step~2, since that ball has
	diameter less than $h$ and can contain at most one
	planar interface. The family of planes is therefore
	locally finite.
	
	\noindent
	\emph{Step 4: One-dimensionality.}
	Since $U$ is nonconstant, $\Gamma$ is nonempty.
	Choose a common unit normal $\nu$ and write
	\[
	\Gamma=\bigcup_{t\in\mathcal S}
	\{x\in\R^d:x\cdot\nu=t\},
	\]
	where $\mathcal S\subset\R$ is closed and its distinct
	points have distance at least $h$.
	For every connected component $I$ of
	$\R\setminus\mathcal S$, the corresponding slab
	\[
	\{x\in\R^d:x\cdot\nu\in I\}
	\]
	is connected and has zero variation measure.
	Hence $U$ is constant almost everywhere on that slab.
	Define the function $g$ on $I$ to be this constant value.
	Then, by definition
	\[
	U(x)=g(x\cdot\nu)\qquad\text{$\mathcal{L}^d$-almost everywhere}.
	\]
	The local half-space description shows that the values
	switch across every interface, so the phases alternate.
	Since $\mathcal S$ is locally finite, $g$ has only finitely
	many jumps on each bounded interval, and therefore \(g\in BV_{\mathrm{loc}}(\R;\{-1,1\})\).
\end{proof}

The following result was used in the proof of Lemma \ref{lem:rigidity}. Although the result is probably standard, we could not find anywhere a rigorous proof, hence we sketch its proof.

\begin{lemma}[Euclidean monotone sets]
	\label{lem:euclidean-monotone}
	Let $B\subset\mathbb R^d$ be an open ball and let $A\subset B$
	be measurable. Suppose that, for almost every direction
	$e\in\mathbb S^{d-1}$ and for $\mathcal H^{d-1}$-almost every
	$y\in e^\perp$, both
	\[
	A\cap(y+\mathbb Re)
	\qquad\text{and}\qquad
	(B\setminus A)\cap(y+\mathbb Re)
	\]
	agree, up to $\mathcal H^1$-null sets, with intervals,
	possibly empty.
	Then there exists a half-space $H\subset\mathbb R^d$ such that
	\[
	|A\mathbin{\triangle}(B\cap H)|=0,
	\]
	allowing also the cases $|A|=0$ and $|B\setminus A|=0$.
\end{lemma}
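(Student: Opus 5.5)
We plan to show that $A$ is, up to null sets, convex, that $B\setminus A$ is convex as well, and then separate the two by a hyperplane. The degenerate cases $|A|=0$ or $|B\setminus A|=0$ need no argument, so we assume both have positive measure.

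\emph{Step 1: convexity of the density-one representatives.} Let $A^{(1)}$ be the set of density-one points of $A$ inside $B$. We claim it is convex. Take $x_0,x_1\in A^{(1)}$ and fix a small $\rho$. Consider the family of segments joining points $a\in B_\rho(x_0)$ to points $b\in B_\rho(x_1)$. Parametrizing this family by $(a,b)$ gives, through the standard change of variables onto pairs (line, position on the line), an absolutely continuous measure on lines. Hence for almost every such pair the line through $a,b$ is a "good" line, meaning one along which $A$ is an interval up to $\HH^1$-null sets. By Fubini we also get $a,b\in A$ for most pairs, because $x_0,x_1$ are density points. On a good line with $a,b\in A$ Lebesgue points of the slice, the interval property forces $[a,b]\subset A$ up to $\HH^1$-null sets. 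Now fix $t\in(0,1)$ and integrate over pairs. Since the map $(a,b)\mapsto(1-t)a+tb$ pushes uniform measure on $B_\rho(x_0)\times B_\rho(x_1)$ to a measure comparable to Lebesgue measure on $B_{c\rho}((1-t)x_0+tx_1)$, we obtain that $(1-t)x_0+tx_1$ has density one in $A$. One has to check that the exceptional sets are negligible after this pushforward; this is the technical heart of the step. The same argument applies to $B\setminus A$.

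\emph{Step 2: separation.} We now have $K_1:=\operatorname{int}\,\mathrm{co}A^{(1)}$ and $K_2:=\operatorname{int}\,\mathrm{co}(B\setminus A)^{(1)}$, which are disjoint open convex sets. They are disjoint because a point cannot have density one in both $A$ and $B\setminus A$, and convex hulls of convex sets add nothing. Moreover $|A\triangle K_1|=0$ and $|(B\setminus A)\triangle K_2|=0$, since a convex set with nonempty interior differs from its interior by a null set and Lebesgue points fill it. Hahn--Banach then gives a hyperplane $\{x\cdot\nu=c\}$ with $K_1\subset\{x\cdot\nu\ge c\}$ and $K_2\subset\{x\cdot\nu\le c\}$. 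Because $|K_1|+|K_2|=|B|$, both containments are equalities within $B$ up to null sets. Setting $H=\{x\cdot\nu>c\}$, we conclude $|A\triangle(B\cap H)|=0$.

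\emph{Main obstacle.} The delicate point is Step 1. The hypothesis holds only for almost every direction and almost every line, so we must control the exceptional sets when passing from "almost every line is good" to "almost every segment between two small balls lies on a good line". We will handle this with the Crofton-type change of variables $(a,b)\mapsto(\text{direction},\,y,\,s_a,\,s_b)$, whose Jacobian $|b-a|^{-(d-1)}$ is bounded on these pairs because $|b-a|$ stays bounded below when $x_0\ne x_1$. This shows the relevant exceptional pair-sets are null. As a fallback we can mollify the argument by averaging over $t$ instead of fixing it.
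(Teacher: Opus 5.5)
Your proposal is correct, but Step 1 takes a different and more direct route than the paper. The paper proceeds in four stages:
\begin{enumerate}
\item it proves the a.e.\ pair-interpolation property via Blaschke--Petkantschin;
\item it iterates this by induction, so that almost every $(d+1)$-tuple in $A^{d+1}$ spans a simplex lying in $A$ up to a null set;
\item it upgrades this to every simplex with vertices in $E=A^{(1)}\cap B$, by approximating with good tuples and applying Fatou's lemma;
\item it takes a countable dense subset of $E$ and uses Carath\'eodory's theorem with a closure argument to get $|\operatorname{co}E\setminus A|=0$.
\end{enumerate}
You instead show that $A^{(1)}$ itself is convex. For $(a,b)$ uniform on $B_\rho(x_0)\times B_\rho(x_1)$, the law of $(1-t)a+tb$ is the convolution of the uniform laws on $B_{(1-t)\rho}$ and $B_{t\rho}$, recentred at $p_t=(1-t)x_0+tx_1$. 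Its density is at least $c_t\rho^{-d}$ on $B_{\rho/2}(p_t)$. Set $N_t:=\{(a,b)\in A\times A:(1-t)a+tb\notin A\}$. Then $|B_{\rho/2}(p_t)\setminus A|$ is at most $c_t^{-1}\rho^d$ times the normalized measure of pairs with $a\notin A$, $b\notin A$, or $(a,b)\in N_t$. That is $o(\rho^d)$, so $p_t$ is a density point of $A$.

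What each route buys:
\begin{itemize}
\item Yours dispenses with the simplex induction, the Fatou step and Carath\'eodory. It also gives the sharper statement that the canonical representative is convex, so no convex hull is needed before separating.
\item The paper's route stays purely qualitative and never needs a density bound for a pushforward.
\end{itemize}

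One point needs an explicit line: $|N_t|=0$ for the \emph{fixed} $t$. This does not follow from the a.e.-in-$t$ statement by Fubini alone. Argue on a good line $\ell$ instead. Pairs with an endpoint in $(A\cap\ell)\setminus I_\ell$ are null. For fixed $a\in I_\ell$, the map $b\mapsto(1-t)a+tb$ is an affine bijection of $\ell$, so it pulls the $\mathcal H^1$-null set $I_\ell\setminus A$ back to a null set. Blaschke--Petkantschin then gives $|N_t|=0$. Your $t$-averaging fallback, with a window of length comparable to $\rho$, also works.

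Finally, the lower bound on $|b-a|$ is unnecessary. Null sets transfer between pairs of points in $\mathbb R^d$ and pairs of points on common lines regardless of the weight $|a-b|^{d-1}$, which is positive off the diagonal.
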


\begin{proof}
	If $|A|=0$ or $|B\setminus A|=0$, the conclusion is immediate.
	
	We first prove that $A$ agrees almost everywhere with a convex set.
	The hypothesis concerning the sections of $A$, together with
	Fubini's theorem and the Blaschke--Petkantschin formula, implies
	\begin{equation}\label{eq:pair-interpolation}
		(1-t)x+ty\in A
		\quad\text{for almost every }(x,y,t)\in A\times A\times(0,1).
	\end{equation}
	Here and below, almost every refers to the appropriate product
	Lebesgue measure. To justify this implication, parametrize pairs
	of distinct points by their common line and their two coordinates
	on that line. On almost every such line $\ell$, the section
	$A\cap\ell$ agrees almost everywhere with an interval $I_\ell$.
	For almost every pair of endpoints in $A\cap\ell$, both endpoints
	belong to $I_\ell$ and are distinct. Their interpolating segment
	lies in $I_\ell$, and its points belong to $A$ for almost every
	interpolation parameter. The Blaschke--Petkantschin formula
	then transfers this assertion to almost every pair in $A\times A$ (see \cite[Section~7.2]{SW08}).
	
	We now iterate
	\eqref{eq:pair-interpolation}. For $k\geq 1$, consider
	\[
	\Delta_k^\circ
	=
	\left\{(\lambda_1,\ldots,\lambda_k):
	\lambda_i>0,\ \sum_{i=1}^k\lambda_i<1\right\},
	\qquad
	\lambda_0=1-\sum_{i=1}^k\lambda_i.
	\]
	Then
	\begin{equation}\label{eq:multiple-interpolation}
		\sum_{i=0}^k\lambda_i x_i\in A
		\quad\text{for almost every }
		(x_0,\ldots,x_k,\lambda)
		\in A^{k+1}\times\Delta_k^\circ.
	\end{equation}
	We prove this by induction on $k$. The case $k=1$ is
	\eqref{eq:pair-interpolation}. For the induction step, write
	\[
	\sum_{i=0}^k\lambda_i x_i=(1-t)z+t x_k,
	\qquad
	z=\sum_{i=0}^{k-1}\Lambda_i x_i,
	\]
	where $t=\lambda_k$ and $\Lambda_i=\lambda_i/(1-t)$.
	By the induction hypothesis, $z\in A$ for almost every choice
	of the preceding vertices and coefficients.
	
	For each fixed $\Lambda \in\Delta_{k-1}^\circ$, the map
	\[
	(x_0,\ldots,x_k,t)\longmapsto
	\left(\sum_{i=0}^{k-1}\Lambda_i x_i,\ x_k,\ t\right)
	\]
	is a surjective affine map. Its inverse image of a Lebesgue-null
	set is therefore null. Consequently, the exceptional set in
	\eqref{eq:pair-interpolation}, applied to $(z,x_k,t)$, remains
	null after this substitution. Fubini's theorem and the smooth
	change of coefficient variables
	$(\Lambda,t)\mapsto\lambda$ prove
	\eqref{eq:multiple-interpolation}.
	Taking $k=d$ and applying Fubini, we obtain
	\begin{equation}\label{eq:good-simplices}
		\bigl|\operatorname{co}\{x_0,\ldots,x_d\}\setminus A\bigr|=0
		\quad\text{for almost every }(x_0,\ldots,x_d)\in A^{d+1}.
	\end{equation}
	Indeed, for affinely independent vertices the barycentric map
	from $\Delta_d^\circ$ onto the interior of their simplex is an
	invertible affine map. For affinely dependent vertices, the
	simplex has zero $d$-dimensional measure.
	
	Let \(E:=A^{(1)}\cap B\), and \(K:=\operatorname{co}E\).
	By the Lebesgue density theorem, $|A\setminus K|=0$.
	We show that every simplex with vertices in $E$ is contained
	in $A$ up to a null set. Fix affinely independent
	$p_0,\ldots,p_d\in E$ and set $S:=\operatorname{co}\{p_0,\ldots,p_d\}$.
	Every sufficiently small product neighborhood of
	$(p_0,\ldots,p_d)$ meets $A^{d+1}$ in a set of positive measure.
	We can therefore choose tuples
	$(x_0^{(n)},\ldots,x_d^{(n)})$ satisfying
	\eqref{eq:good-simplices}, with $x_i^{(n)}\to p_i$ for every $i$.
	Set $S_n=\operatorname{co}\{x_0^{(n)},\ldots,x_d^{(n)}\}$.
	Every point of $\operatorname{int}S$ belongs to $S_n$ for all
	sufficiently large $n$. Since $|\partial S|=0$, Fatou's lemma
	gives
	\[
	|S\setminus A|
	\leq \liminf_{n\to\infty}|S_n\setminus A|=0.
	\]
	Choose a countable dense subset $D\subset E$. By
	Carath\'eodory's theorem, $\operatorname{co}D$ is a countable
	union of simplices with vertices in $D$. Hence $|(\operatorname{co}D)\setminus A|=0.$
	Moreover, we have $\overline{\operatorname{co}D}=\overline K.$
	Convex sets with the same closure have the same interior, and
	the boundary of a convex set is Lebesgue-null. Thus
	$|K\setminus\operatorname{co}D|=0$, and consequently
	$|K\setminus A|=0$. We thus have proved $|A\triangle K|=0$.
	Notice also that $K\subset B$, since $B$ is convex.
	
	Applying the same argument to $B\setminus A$ yields a convex
	set $K'\subset B$ such that $|(B\setminus A)\triangle K'|=0.$
	Both $K$ and $K'$ have nonempty interior, because both have
	positive measure. Their interiors are disjoint, since
	$|K\cap K'|=0$. If one of them has
	empty interior, it is null and we choose $H$ disjoint from $B$ or
	containing $B$. Otherwise, the separation theorem for convex sets \cite[Theorem~11.3]{Roc70} gives a half-space
	$H$ with $\operatorname{int}K\subset H$ and
	$\operatorname{int}K'\subset\mathbb R^d\setminus H$. Since
	$|B\setminus(K\cup K')|=0$ and convex sets have null boundary,
	we conclude $|A\triangle(B\cap H)|=0$.
\end{proof}

\subsection{Proof of the periodic theorem}
We are finally ready to prove Theorem \ref{thm:periodic}.
\begin{proof}[Proof of Theorem~\ref{thm:periodic}]
Multiplying \eqref{eq:comparison} by $\gamma$ and using $J=\gamma j_\lambda(h)$ proves \eqref{eq:mainper}.

Suppose now that equality holds in \eqref{eq:mainper}.
Then both nonnegative terms in \eqref{eq:exactdefect} vanish.
In particular,
\[
\big\langle\mu_{\lambda,h}^{(d)},
\psi_U(|\cdot|)\big\rangle=0.
\]
For $d\le5$, positivity of the Lebesgue density of
$\mu_{\lambda,h}^{(d)}$ in $B_h$, together with continuity
and non-negativity of $\psi_U$, implies
$\psi_U(r)=0$ for every $0<r<h$.
For $d=6,7$, since $\psi_U$ is
non-negative and non-decreasing, the pairing in \eqref{eq:liftedradialpairing} is non-negative, hence, the vanishing of the pairing implies $\psi_0=0$ on $[0,h]$ and $\psi_0'=0$ on $(0,h]$.

Thus, in every dimension under consideration,
\[
\fint_{\mathbb S^{d-1}}\defect_U(re)\,d\HH^{d-1}(e)
=\psi_U(r)=0
\qquad(0<r<h).
\]
Since $\defect_U$ is continuous and nonnegative,
its restriction to each sphere $\partial B_r$ must
vanish identically. Together with $\defect_U(0)=0$,
this yields
\[
\defect_U(z)=0
\qquad\text{for every }|z|<h.
\]
The function $U$ has zero mean. Indeed, this is imposed for $\lambda=0$, and follows from Corollary~\ref{cor:mean} for $\lambda>0$. It is therefore non-constant. Lemma~\ref{lem:rigidity} yields
$U(x)=g(\nu\cdot x)$ with parallel interfaces spaced at least $h$ apart for some $\nu \in \mathbb{S}^{d-1}$.

Tangential translation invariance implies that every nonzero Fourier frequency with $\widehat U(q)\ne0$ is parallel to $\nu$. To verify this, for each $v\perp\nu$ one has $U(\cdot+v)=U$, hence
$e^{iq\cdot v}\widehat U(q)=\widehat U(q)$. Varying $v$ gives $q\perp\nu^\perp$.
Vanishing of the spectral term in \eqref{eq:exactdefect} further implies that
if \(\widehat U(q)\ne0\) for \(q\ne0\), then \(h|q|\in(2\N-1)\pi\).

Since such $q$'s are parallel to $\nu$, we get that
$e^{iq\cdot(h\nu)}=-1$. The zero Fourier coefficient vanishes, so uniqueness of Fourier series gives
\begin{equation}\label{eq:antiperiod}
 U(x+h\nu)=-U(x)\quad\text{almost everywhere},
 \qquad g(t+h)=-g(t)\quad\text{almost everywhere}.
\end{equation}
Thus, \eqref{eq:antiperiod} exchanges the two phases and leaves their interface family invariant. That is, if $S$ is the set of jump points of $g$, then $S+h=S$.
Choose $t_0\in S$. The set $t_0+h\Z$ is contained in $S$. No additional jump point can lie between two consecutive points of this grid, because distinct points have distance at least $h$. Hence $S=t_0+h\Z$. The phases alternate, so $g$ is a square wave with layer width exactly $h$. Its period group is exactly $2h\Z$.
Using the fact that \(U\) is \(L\)-periodic in each coordinate, we infer that for every $i=1,\dots,d$,
\[
U(x+Le_i)
=g\bigl((x+Le_i)\cdot\nu\bigr)
=g(x\cdot\nu+L\nu_i).
\]
Therefore, \(g(t+L\nu_i)=g(t)\) for $\mathcal{L}^1$-almost every $t$.
So \(L\nu_i\) must be a period of \(g\), which in turn means \(L\nu_i=2h\,a_i\)
for some $a_i \in \Z$.
Since \(|\nu|=1\),
\[
|a|=\frac{L}{2h}|\nu|=\frac{L}{2h}>0,
\]
so \(a\ne0\) and the phase of the square wave gives exactly \eqref{eq:periodicminimizers}.

Conversely, let $U$ have the form \eqref{eq:periodicminimizers}. Notice that by symmetry it has zero mean and variation $|DU|(T)=2V/h$.
Its Fourier support consists of odd multiples of $2\pi a/L$. In particular, for every $q \in (2\pi/L)\Z^d$ with $q \neq 0$ such that $\widehat{U}(q)\neq0$, since $|a|=L/2h$, we have $h|q| \in (2\N-1)\pi$. Thus the spectral term in \eqref{eq:exactdefect} vanishes. If $|z|\le h$, then $|z\cdot\nu|\le h$, and direct translation of the square wave gives
\[
 \trn_U(z)=\frac{2V}{h}|z\cdot\nu|=\varn_U(z).
\]
This implies that the geometric term in \eqref{eq:exactdefect} also vanishes since the measure is supported in $\overline B_h$. This proves equality and completes the classification.
\end{proof}

\subsection{Even reflection and the Neumann functional}
Before proving Theorem \ref{thm:neumann} we need a preliminary result.
\begin{lemma}[Exact reflection identities]\label{lem:reflection}
Let $u\in\A_\lambda(Q_L)$. Define its even reflection on $(-L,L)^d$ by
\[
 U(x):=u(|x_1|,\ldots,|x_d|),
\]
and extend it with period $2L$ in every coordinate. Then
\begin{equation}\label{eq:reflectionBV}
 |DU|(\T_{2L}^d)=2^d|Du|(Q_L),\qquad
 \int_{\T_{2L}^d}U=2^d\int_{Q_L}u,
\end{equation}
and
\begin{equation}\label{eq:reflectionN}
 N_\lambda^{\mathrm{per}}(U)=2^dN_\lambda^N(u).
\end{equation}
\end{lemma}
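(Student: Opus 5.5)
We prove the three identities separately, working in the fundamental cell $(-L,L)^d$ of $\T_{2L}^d$, which splits into the $2^d$ reflected copies $\rho_\epsilon(Q_L)$. Here $\rho_\epsilon(x)=(\epsilon_1x_1,\dots,\epsilon_dx_d)$ and $\epsilon\in\{-1,1\}^d$.

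The integral identity is immediate. Each reflection $\rho_\epsilon$ preserves Lebesgue measure, and $U\circ\rho_\epsilon=u$ on $Q_L$. Summing over the $2^d$ copies gives $\int_{\T_{2L}^d}U=2^d\int_{Q_L}u$.

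For the variation, we must also show that $U$ jumps on none of the reflection hyperplanes $\{x_i=0\}$ and $\{x_i=\pm L\}$; the latter are identified by periodicity. The standard argument is by approximation. Since $Q_L$ is a Lipschitz domain, take $u_k\in C^\infty(\overline{Q_L})$ with $u_k\to u$ in $L^1$ and $|Du_k|(Q_L)\to|Du|(Q_L)$ (strict convergence). The even reflections $U_k$ are Lipschitz on the torus: they are continuous across the reflection planes, since both one-sided traces coincide. Moreover $|DU_k|(\T^d_{2L})=2^d|Du_k|(Q_L)$, because the planes are Lebesgue-null and $|\nabla U_k|=|\nabla u_k|\circ\rho_\epsilon$ on each copy. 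Lower semicontinuity then gives $|DU|\le 2^d|Du|(Q_L)$. The reverse inequality holds because $|DU|$ restricted to the open copies already equals $2^d|Du|(Q_L)$. Equivalently, the interior traces of $u$ on the faces from the two sides agree by construction, so no jump measure sits on the planes.

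For the nonlocal term, let $w=R^N_\lambda u$, and let $W$ be its even reflection, extended $2L$-periodically. Then $W\in H^1_{\mathrm{per}}$: even reflection of an $H^1(Q_L)$ function is $H^1$ across the planes, since traces match. We will check that $W=R^{\mathrm{per}}_\lambda U$. Take a periodic test function $\Phi\in H^1(\T^d_{2L})$ and symmetrize it,
\[
\varphi:=2^{-d}\sum_\epsilon \Phi\circ\rho_\epsilon\big|_{Q_L}\in H^1(Q_L).
\]
Changing variables on each copy, and using that $\nabla W\circ\rho_\epsilon$ transforms compatibly with $\nabla\Phi$ (signs cancel in the dot product), we obtain
\[
\int_{\T}(\nabla W\cdot\nabla\Phi+\lambda^2W\Phi)=2^d\int_{Q_L}(\nabla w\cdot\nabla\varphi+\lambda^2w\varphi),
\]
and likewise $\int_\T U\Phi=2^d\int_{Q_L}u\varphi$. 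The weak Neumann formulation \eqref{eq:weakN}, which allows arbitrary $H^1(Q_L)$ tests, then yields the periodic weak equation. For $\lambda>0$, uniqueness of $R^{\mathrm{per}}_\lambda$ gives $W=R_\lambda^{\mathrm{per}}U$. For $\lambda=0$, we first note that $U$ has zero mean by the integral identity, and $W$ has zero mean for the same reason, so the normalization matches. Finally,
\[
N^{\mathrm{per}}_\lambda(U)=\int_\T UW=2^d\int_{Q_L}uw=2^dN^N_\lambda(u).
\]

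The main obstacle is the variation identity, specifically excluding a hidden jump on the reflection hyperplanes. We handle it via strict approximation by functions smooth up to the boundary together with lower semicontinuity. If needed, one can instead invoke the BV trace theorem on Lipschitz domains: the gluing formula $|DU|(\partial Q)=\int|u^+-u^-|\,d\HH^{d-1}=0$ holds because $U$ has the same trace from both sides of each plane.
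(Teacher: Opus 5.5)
Your proof is correct. The mean identity and the nonlocal term are handled exactly as in the paper. The only differences there are cosmetic: the paper tests the Neumann problem with $\varphi=\sum_\varepsilon\psi(\varepsilon\,\cdot)$ for smooth periodic $\psi$ and then argues by density, while you test directly with $\Phi\in H^1(\T_{2L}^d)$ and normalize by $2^{-d}$.

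The genuine difference is in the variation identity. The paper argues in one step with the $BV$ gluing theorem \cite[Theorem~3.84]{AFP}. Reflection preserves variation on the open copies, and the one-sided traces agree on the faces $x_i=0$ and $x_i=\pm L$, so no jump part is created there.

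Your main argument instead runs through approximation:
\begin{itemize}
\item you approximate $u$ strictly by $u_k\in C^\infty(\overline{Q_L})$;
\item you note that the reflections $U_k=u_k\circ(\text{folding map})$ are Lipschitz on the torus, with $|DU_k|(\T_{2L}^d)=2^d|Du_k|(Q_L)$;
\item you get the upper bound $|DU|(\T_{2L}^d)\le 2^d|Du|(Q_L)$ by lower semicontinuity;
\item you read off the lower bound from the $2^d$ disjoint open copies.
\end{itemize}
This treats all reflection planes and the periodic identification simultaneously and avoids trace theory entirely. It also yields $U\in BV(\T_{2L}^d)$ as a byproduct.

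The price is the strict density of $C^\infty(\overline{Q_L})$ up to the boundary. This is standard on Lipschitz domains, but it is usually proved through extension operators, which themselves rest on reflection and trace arguments. For the cube you can make it self-contained. Let $c$ be the centre of $Q_L$ and set $u_t(x):=u\bigl(c+(x-c)/t\bigr)$ for $t>1$. This is defined on $c+t(Q_L-c)\supset\overline{Q_L}$ and has variation $t^{d-1}|Du|(Q_L)$. Mollify $u_t$ at a scale below $\mathrm{dist}\bigl(Q_L,\partial(c+t(Q_L-c))\bigr)$ and let $t\downarrow1$. Only the resulting $\limsup$ bound together with $L^1$ convergence is needed.

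The paper's route is shorter once the trace and gluing machinery is taken for granted. Yours is more elementary and would transfer verbatim to the $H^1$ statement $W\in H^1(\T_{2L}^d)$, which both you and the paper justify only by ``traces match''.
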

\begin{proof}
We first observe that on each open reflected copy of $Q_L$, reflection is an isometry and preserves total variation. Across a coordinate face $x_i=0$, the two one-sided traces agree by evenness. Moreover, the same holds across the periodic identification $x_i=-L\sim L$. Therefore, there is no additional jump term on these faces (see e.g. \cite[Theorem 3.84]{AFP}). This entails $U \in BV(\T_{2L}^d;\{-1,1\})$ and \eqref{eq:reflectionBV}.

To prove \eqref{eq:reflectionN} let $v=R_\lambda^Nu$ on $Q_L$ (with mean-zero normalization if $\lambda=0$), and let $W$ be its even reflection. Since again the traces match along the faces, we get $W\in H^1(\T_{2L}^d)$. For a smooth periodic test function $\psi$ defined on $(-L,L)^d$, set
\[
 \varphi(y):=\sum_{\varepsilon\in\{-1,1\}^d}
 \psi(\varepsilon_1y_1,\ldots,\varepsilon_dy_d),
 \qquad y\in Q_L.
\]
Clearly $\varphi \in H^1(Q_L)$ and satisfies
\[
 \partial_i\varphi(y)=
 \sum_{\varepsilon\in\{-1,1\}^d} \varepsilon_i(\partial_i\psi)(\varepsilon y).
\]
We now use $\varphi$ in \eqref{eq:weakN}. Since
$\partial_iW(\varepsilon y)=\varepsilon_i\partial_i v(y)$,
\[
 \int_{\T_{2L}^d}(\nabla W\cdot\nabla\psi+\lambda^2 W\psi)\dd x
 =\int_{\T_{2L}^d}U\psi\dd x.
\]
By density, the identity holds for every $\psi\in H^1(\T_{2L}^d)$. For $\lambda>0$ this characterizes $R_\lambda^{\mathrm{per}}U$. For $\lambda=0$, $U$ and the reflected potential both have zero mean, and the zero-mean solution is unique. Hence the reflected potential equals $R_\lambda^{\mathrm{per}}U$. Therefore, using a change of variables,
\[
 N_\lambda^{\mathrm{per}}(U)
 =\int_{\T_{2L}^d}UW\dd x
 =2^d\int_{Q_L}uv\dd x
 =2^dN_\lambda^N(u).
\]
This concludes the proof.
\end{proof}

\subsection{Proof of the Neumann theorem}
We now prove Theorem \ref{thm:neumann}, it will follow as a consequence of Theorem \ref{thm:periodic}.
\begin{proof}[Proof of Theorem~\ref{thm:neumann}]
	Let $u\in\A_\lambda(Q_L)$, and like in Lemma \ref{lem:reflection}, define its even extension by \(U(x)=u(|x_1|,\ldots,|x_d|)\) for \(x\in(-L,L)^d\)
	followed by $2L$-periodic extension in each coordinate.
	By Lemma~\ref{lem:reflection},
	\[
	|DU|(\T_{2L}^d)=2^d|Du|(Q_L),
	\qquad
	N_\lambda^{\mathrm{per}}(U)=2^dN_\lambda^N(u).
	\]
	This gives that \(\E_{\lambda,J,\gamma}^{\mathrm{per}}(U)=2^d\E_{\lambda,J,\gamma}^N(u)\).
	Moreover, \(\mean U=\mean u\) by \eqref{eq:reflectionBV}.
	In particular, when $\lambda=0$, the mean-zero constraint
	is preserved, so $U$ is an admissible periodic competitor.
	Since $J=\gamma j_\lambda(h)$, Proposition~\ref{prop:comparison}
	on $\T_{2L}^d$ gives \(\E_{\lambda,J,\gamma}^{\mathrm{per}}(U)
	\ge\gamma\sigma_\lambda(h)(2L)^d\).
	Dividing by $2^d$ we obtain
	\[
	\E_{\lambda,J,\gamma}^N(u)
	\ge\gamma\sigma_\lambda(h)L^d,
	\]
	which is \eqref{eq:mainN}.
	For $\lambda>0$, applying
	Corollary~\ref{cor:mean} to $U$ gives
	\[
	\E_{\lambda,J,\gamma}^N(u)
	-\gamma\sigma_\lambda(h)L^d
	\ge
	\frac{\gamma L^d}{\lambda^2}
	\sech^2\left(\frac{\lambda h}2\right)\,|\overline u|^2.
	\]
	Thus equality forces $\mean u=0$ also in the
	screened case.

	Suppose now that equality holds in \eqref{eq:mainN}, that is
	\[
	\E_{\lambda,J,\gamma}^{\mathrm{per}}(U)
	=\gamma\sigma_\lambda(h)(2L)^d.
	\]
	By Theorem~\ref{thm:periodic} applied with domain size $2L$, we infer that
	there exist $a\in\Z^d\setminus\{0\}$ and $\varphi\in\R$
	such that
	\[
	U(x)=\sgn\cos(q\cdot x+\varphi),
	\qquad
	q=\frac{\pi}{L}a,
	\qquad
	|a|=\frac{L}{h}.
	\]
	In particular, \(q\in \pi\Z^d/L\) and \(|q|=\pi/h\).

	For each $i\in\{1,\ldots,d\}$, let us now define \(R_i(x_1,\ldots,x_d)
	:=(x_1,\ldots,-x_i,\ldots,x_d)\).
	By construction of the even extension, we have that \(U(R_i x)=U(x)\) $\mathcal{L}^d$-almost everywhere.
	
	The interfaces of the periodic lift of $U$,
	$\Gamma:=\supp|DU|$ in $\R^d$, are given by
	\[
	\Gamma
	=
	\bigcup_{k\in\Z}
	\left\{
	x\in\R^d:
	q\cdot x+\varphi=\frac{\pi}{2}+k\pi
	\right\}.
	\]
	In particular, $\Gamma$ is a family of complete parallel hyperplanes with
	normal $q$. The identity $U\circ R_i=U$ implies
	$R_i\Gamma=\Gamma$. On the other hand, reflecting these
	hyperplanes gives a family with normal $R_iq$.
	Since the interface family has a unique normal line,
	$R_iq$ must be parallel to $q$. This implies \(R_iq=\pm q\)
	for every $i=1,\dots,d$.
	
	Suppose now that $q_i\ne0$ and $q_k\ne0$ for distinct indices
	$i$ and $k$. Reflection in the $i$-th coordinate changes
	$q_i$ to $-q_i$, so $R_iq\ne q$. It leaves $q_k$ unchanged,
	so $R_iq\ne -q$. This is a contradiction.
	Therefore $q$ has exactly one nonzero coordinate.
	
	Since $q\in(\pi/L)\Z^d$, there exist
	$j\in\{1,\ldots,d\}$ and $n\in\N$ with $n \geq 1$ such that \(q=\pm\frac{\pi n}{L}e_j\). 
	Thus, $nh=L$ by the condition on $|q|$.
	In particular, $L/h\in\N$.
	
	Replacing $\varphi$ by $-\varphi$ if necessary, we may
	choose the positive orientation and write
	\[
	U(x)=\sgn\cos\left(\frac{\pi n x_j}{L}+\varphi\right).
	\]
	Set $\kappa:=\pi n/L$. We now show that $\varphi \in \pi \Z$.
	The fundamental Fourier terms of the square wave are
	\[
	\sgn\cos(\kappa x_j+\varphi)
	=
	\frac{2}{\pi}e^{i\varphi}e^{i\kappa x_j}
	+\frac{2}{\pi}e^{-i\varphi}e^{-i\kappa x_j}
	+\text{(harmonics of order $\geq 3$)},
	\]
	with the expansion understood in $L^2$.
	Thus, with the normalized Fourier coefficients on the torus,
	\[
	\widehat U(\kappa e_j)=\frac{2}{\pi}e^{i\varphi},
	\qquad
	\widehat U(-\kappa e_j)=\frac{2}{\pi}e^{-i\varphi}.
	\]
	
	Reflection in the $j$-th coordinate interchanges these
	two frequencies. Since $U\circ R_j=U$, their coefficients
	must agree, giving \(e^{i\varphi}=e^{-i\varphi}.\)
	It follows that $\sin\varphi=0$, or equivalently,
	$\varphi\in\pi\Z$. Therefore,
	\[
	U(x)=\pm\sgn\cos\left(\frac{\pi n x_j}{L}\right).
	\]
	Restricting $U$ to $Q_L$ gives exactly
	\eqref{eq:neumannminimizers}.

	Conversely, suppose now $n=L/h\in\N$, and let
	\[
	u(x)=\varepsilon\sgn\cos\left(\frac{\pi n x_j}{L}\right),
	\qquad \varepsilon\in\{-1,1\}.
	\]
	This function belongs to $BV(Q_L;\{-1,1\})$ and it has zero mean.
	Thus, $u$ is admissible for every
	$\lambda\ge0$, including $\lambda=0$.
	
	Its even periodic extension on $\T_{2L}^d$ is given by
	\[
	U(x)=\varepsilon\sgn\cos\left(\frac{\pi n x_j}{L}\right).
	\]
	Indeed, this expression is even in every coordinate
	and $2L$-periodic. Its fundamental frequency is
	$q=(\pi n/L)e_j$, with $|q|=\pi/h$. It therefore belongs to the periodic
	equality class in Theorem~\ref{thm:periodic}.
	This implies that
	\[
	\E_{\lambda,J,\gamma}^{\mathrm{per}}(U)
	=\gamma\sigma_\lambda(h)(2L)^d.
	\]
	Applying the reflection identities of Lemma \ref{lem:reflection} once more yields
	\[
	\E_{\lambda,J,\gamma}^N(u)
	=2^{-d}\E_{\lambda,J,\gamma}^{\mathrm{per}}(U)
	=\gamma\sigma_\lambda(h)L^d.
	\]
	These profiles attain the lower bound and are therefore
	global minimizers. 
\end{proof}

\appendix
\section{Limits of positivity in higher dimensions}
\label{app:dimension}

The comparison distributions constructed in
Proposition~\ref{prop:measure} are finite positive measures
in dimensions $d\le5$. In dimensions six and seven,
their action remains non-negative on non-negative,
non-decreasing radial test functions, which is sufficient
for the sharp comparison inequality.
Nevertheless, positivity as a measure cannot hold in
any dimension $d\ge6$.

More precisely, consider the radial multiplier
\begin{equation}\label{eq:appF}
	\Fp(\xi)=
	\frac{\cosh(\lambda h)-\cos(h|\xi|)}
	{(1+\cosh(\lambda h))(\lambda^2+|\xi|^2)},
\end{equation}
with the value at $\lambda=0$ and $\xi=0$ understood
by continuity.

\begin{proposition}\label{prop:nogo}
	For every $h>0$, $\lambda\ge0$, and $d\ge6$, there is
	no finite positive Borel measure on $\R^d$ whose Fourier
	transform equals \eqref{eq:appF} at every real frequency.
\end{proposition}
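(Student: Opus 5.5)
Suppose, for contradiction, that $\mu$ is a finite positive Borel measure on $\R^d$, $d\ge6$, with $\widehat\mu=\Fp$. The plan is to show that $\mu$ must coincide with the distribution $\mu_{\lambda,h}^{(d)}$ obtained by the dimension walk, and then to exhibit a concrete feature of that distribution which is incompatible with positivity. First, $\Fp$ is bounded and continuous, so $\mu$ is a tempered distribution with that Fourier transform. By injectivity of the Fourier transform, $\mu$ is uniquely determined, is radial (since $\Fp$ is radial), and is supported in $\overline B_h$; these facts follow as in Step 3 of Proposition~\ref{prop:measure}, whose construction extends formally to all $d$ via \eqref{eq:fourierwalk}. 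It is then enough to reduce to $d=6$ or $d=7$. If $\mu$ is a positive measure on $\R^d$ with $d\ge8$, its pushforward under the projection onto the first $6$ or $7$ coordinates is a positive finite measure. By the projection identity $\widehat{P_\#\mu}(\eta)=\widehat\mu(P^{\mathsf T}\eta)$, this pushforward has the multiplier $\Fp(|\eta|)$ in the lower dimension. So positivity would descend, and it suffices to rule out $d=6$ and $d=7$.

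For $d=7$, I would compute $\mu^{(7)}=-\frac{1}{2\pi r}\partial_r\mu^{(5)}$ distributionally. The measure $\mu^{(5)}$ contains the surface term $\frac{\Alh}{h^2}\HH^4\llcorner\partial B_h$. Differentiating this in $r$ produces a derivative of a surface measure, a term of the form $-c\,\partial_r\delta_{|z|=h}$ with $c\neq0$, and no finite measure has this form. To make the argument rigorous, I would test against radial smooth functions $\psi_0$ that are nonnegative, supported in $(h-\varepsilon,h+\varepsilon)$, and have $|\psi_0'|$ of order $\varepsilon^{-1}$. Using the pairing \eqref{eq:liftedradialpairing}, the contribution of the surface term is $\frac{h}{5}\psi_0'(h)$ times its mass, while the absolutely continuous part contributes $O(\|\psi_0\|_\infty)$, since $\psi_0$ is supported on a set of small measure and $r\psi_0'$ integrates to $O(1)$ against a bounded density near $h$. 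Choosing $\psi_0$ with $\psi_0(h)=0$ and $\psi_0'(h)\to-\infty$, which is possible with $0\le\psi_0\le1$, makes the pairing tend to $-\infty$. This contradicts $|\langle\mu,\psi\rangle|\le\mu(\R^7)\|\psi\|_\infty$.

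For $d=6$, the same strategy applies with $\mu^{(4)}$, whose density behaves like $k^{(4)}(r)\sim\frac{c}{\sqrt{h-r}}$ near $r=h$ with $c=\frac{h^{1/2}}{4\pi^2(1+\Cosh)h^2\sqrt2}>0$, by \eqref{eq:k4}. The pairing is $\int[\psi_0+\frac{r}{4}\psi_0']\,k^{(4)}(r)\,\omega_3 r^3\,dr$. I would take $\psi_0$ to be a bump that vanishes at $h$, equals $1$ on $[h-2\varepsilon,h-\varepsilon]$, and decreases linearly on $[h-\varepsilon,h]$; a smoothed version of this is admissible. On $[h-\varepsilon,h]$ the derivative is $-1/\varepsilon$, integrated against the density $(h-r)^{-1/2}$. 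That gives a contribution of order $-\varepsilon^{-1}\cdot\varepsilon^{1/2}=-\varepsilon^{-1/2}$. The rising part on $[h-3\varepsilon,h-2\varepsilon]$ contributes $+\varepsilon^{-1}\int(h-r)^{-1/2}$ over a region where the density is smaller, roughly $\varepsilon^{-1}\cdot\varepsilon(\sqrt3-\sqrt2)\varepsilon^{-1/2}$. This is also of order $\varepsilon^{-1/2}$, so the two constants must be compared. Here $\sqrt3-\sqrt2\approx0.32$, while $\int_0^1 s^{-1/2}\,ds=2$. Hence the negative part dominates, the pairing is at most $-c'\varepsilon^{-1/2}$, and it diverges, which again contradicts finiteness and positivity.

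The main obstacle I expect is justifying that a hypothetical positive $\mu$ can actually be paired through \eqref{eq:liftedradialpairing}: one has to identify $\langle\mu,\psi\rangle$ with the walk formula for these specific smooth radial tests. Since $\psi$ is smooth and compactly supported, this is just the distributional identity \eqref{eq:weakfourierwalk} combined with uniqueness, so it should hold. A second point of care is the constant comparison in the $d=6$ case. If it proves delicate, I would instead use a bump whose rise is spread over a much longer interval, which makes the positive part $O(1)$.
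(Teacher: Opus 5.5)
Your argument is correct, but it runs in the opposite direction from the paper's. The paper never uses $\mu_{\lambda,h}^{(6)}$. It pushes the hypothetical positive radial measure $\eta$ on $\R^6$ down to $\R^4$ and identifies the pushforward with $\mu_{\lambda,h}^{(4)}$ by Fourier uniqueness. It then invokes Lemma~\ref{lem:radialprojection6}: the $\R^4$-marginal of normalized surface measure on a sphere of radius $s$ in $\R^6$ is uniform on $B_s^4$ with density $2/(\pi^2 s^4)$. Hence any such marginal has density $\int_{(r,\infty)}\frac{2}{\pi^2 s^4}\,d\nu(s)\le 2\eta(\R^6)/(\pi^2 r^4)$, which is incompatible with $k_{\lambda,h}^{(4)}(r)\gtrsim (h^2-r^2)^{-1/2}$ as $r\to h^-$.

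You instead go up: you identify $\mu$ with $\mu_{\lambda,h}^{(6)}$ and test \eqref{eq:weakfourierwalk} against radial bumps just inside $\partial B_h$. Both proofs rest on the same structure, since the marginal density above is nonincreasing and $O(r^{-4})$. The paper uses the bound; you effectively use the monotonicity. What each route buys:
\begin{itemize}
\item The paper's version is more self-contained. It needs only the explicit four-dimensional density, where the walk still produces an honest positive measure, and never the genuinely distributional six-dimensional pairing built on \cite{GT13}.
\item Yours localizes the failure. It shows that $\mu_{\lambda,h}^{(6)}$ acts negatively on nonnegative tests just inside $\partial B_h$, exactly where $k_{\lambda,h}^{(4)}$ increases.
\end{itemize}

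You can simplify your $d=6$ step considerably. Since $r^3\psi_0+\frac{r^4}{4}\psi_0'=\frac14(r^4\psi_0)'$, for any smooth $\psi_0\ge0$ supported in $(h-\delta,h)$ the pairing equals $-\frac{\omega_3}{4}\int r^4\psi_0\,\partial_r k_{\lambda,h}^{(4)}\,dr$. This is strictly negative for small $\delta$, because $\partial_r k_{\lambda,h}^{(4)}\to+\infty$ as $r\to h^-$. A single bump therefore contradicts positivity, with no blow-up rate or constant comparison needed. In your comparison, the rising part is in fact $2(\sqrt3-\sqrt2)\varepsilon^{-1/2}$ rather than $(\sqrt3-\sqrt2)\varepsilon^{-1/2}$, up to the common factor; this does not change the conclusion.

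Your $d=7$ paragraph contains a real but inconsequential error. A smooth $\psi_0\ge0$ with $\psi_0(h)=0$ has an interior minimum at $h$, so $\psi_0'(h)=0$ and your choice is impossible. You must drop one of the two conditions. Dropping the sign is fine, since you only use $|\langle\mu,\psi\rangle|\le\mu(\R^7)\|\psi\|_\infty$; dropping $\psi_0(h)=0$ is also fine, since that term stays bounded. In any case the $d=7$ case is redundant: projecting $\R^d\to\R^6$ handles every $d\ge7$ once $d=6$ is excluded.
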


Before proving the Proposition, we need a preliminary lemma.

\begin{lemma}\label{lem:radialprojection6}
	Let $\eta$ be a finite positive rotation-invariant measure on $\R^6$. Away from the origin, its projection onto $\R^4$ has a radial density $g \colon (0,\infty) \to [0,+\infty)$ satisfying
	\begin{equation}\label{eq:projectionbound}
		g(r)\le\frac{2\eta(\R^6)}{\pi^2r^4}\qquad \mbox{for $r>0$}.
	\end{equation}
	In particular, $g$ is bounded on every annulus bounded away from the origin. It also has a nonincreasing radial representative.
\end{lemma}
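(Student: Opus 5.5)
The plan is to disintegrate $\eta$ along spheres, compute the projection of each spherical layer explicitly, and then superpose. Write $\rho:=|\cdot|_\#\eta$ for the radial distribution of $\eta$ on $[0,\infty)$. Rotation invariance gives the polar decomposition
\[
\eta=\rho(\{0\})\,\delta_0+\int_{(0,\infty)}\sigma_s\,d\rho(s),
\]
where $\sigma_s$ is the normalized uniform probability measure on $\partial B_s^6$. This follows from uniqueness of rotation-invariant probability measures on each sphere, applied through the disintegration of $\eta$ with respect to $|\cdot|$. Let $P:\R^6\to\R^4$ be the coordinate projection. Since $P_\#$ is linear and commutes with the superposition, it suffices to understand $P_\#\sigma_s$. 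The atom at the origin projects to $\delta_0$, which is invisible away from the origin.

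The key step, and the only real computation, is an Archimedes-type identity: $P_\#\sigma_s$ is the normalized Lebesgue measure on the ball $B^4_s$, that is,
\[
d(P_\#\sigma_s)(x)=\frac{\one_{B_s^4}(x)}{|B_s^4|}\,dx=\frac{2}{\pi^2s^4}\one_{\{|x|<s\}}\,dx.
\]
By scaling we may take $s=1$. I would invoke the standard formula for the marginal of the uniform measure on $\mathbb S^{n-1}$ onto $k$ coordinates: its density is proportional to $(1-|x|^2)^{(n-k-2)/2}$ on $B_1^k$. With $n=6$ and $k=4$ the exponent is zero, so the density is constant on the unit ball. If a self-contained argument is preferred, one can check it directly. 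Parametrize $\mathbb S^5$ by $x\in B^4_1$ together with an angle on the circle of radius $\sqrt{1-|x|^2}$ in the remaining two coordinates. The surface element equals $\frac{1}{\sqrt{1-|x|^2}}\,dx\cdot\sqrt{1-|x|^2}\,d\theta$, so the two square-root factors cancel and the result is uniform. Alternatively, one can compare Fourier transforms through the identity $\widehat{\sigma}$ restricted to $\R^4$, namely $J_2(|\xi|)/|\xi|^2$ up to constants, which is exactly the transform of the indicator of the unit ball in $\R^4$. I expect this step to be the main point to justify carefully, but it is classical.

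Superposing, for $x\ne0$ the projection $P_\#\eta$ has density
\[
g(|x|)=\int_{(|x|,\infty)}\frac{2}{\pi^2s^4}\,d\rho(s),
\]
where Tonelli's theorem justifies exchanging the integrals against test functions supported away from $0$. All three claims then follow immediately.
\begin{itemize}
\item \emph{Bound.} Since $s^{-4}\le r^{-4}$ on the domain of integration, $g(r)\le 2\rho((r,\infty))/(\pi^2r^4)\le 2\eta(\R^6)/(\pi^2 r^4)$, which is \eqref{eq:projectionbound}.
\item \emph{Boundedness on annuli.} The bound depends only on $r$, so $g$ is bounded on every annulus bounded away from the origin.
\item \emph{Monotonicity.} The integrand is nonnegative and the domain $(r,\infty)$ shrinks as $r$ increases, so this representative of $g$ is nonincreasing.
\end{itemize}
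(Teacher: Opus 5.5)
Your proposal is correct and follows essentially the same route as the paper. You decompose $\eta$ into normalized spherical measures plus a possible atom at the origin, and you parametrize $\partial B_s^6$ over $B_s^4\times[0,2\pi)$ to show that each sphere projects to the uniform density $2/(\pi^2s^4)$ on $B_s^4$. Superposing gives $g(r)=\int_{(r,\infty)}2\pi^{-2}s^{-4}\,d\rho(s)$, from which the bound, boundedness on annuli, and monotonicity all follow directly.
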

\begin{proof}
	By rotation invariance, \(\eta\) decomposes into normalized surface measures on spheres, weighted by the distribution of its mass with respect to the radius, together with a possible atom at the origin. Let $\nu$ denote this distribution on $(0,\infty)$.
	
	The radius-$s$ sphere in $\R^6$ is parameterized, away from a surface-null subset, by
	\[
	(y,\vartheta)\longmapsto
	\left(y,\sqrt{s^2-|y|^2}\cos\vartheta,
	\sqrt{s^2-|y|^2}\sin\vartheta\right),
	\quad y\in B_s^4,\quad 0\le\vartheta<2\pi.
	\]
	Its surface element is given by $s\,dy\dd\vartheta$. Since
	$\HH^5(\partial B_s^6)=\pi^3s^5$, the projection of normalized surface measure has density
	\[
	\frac{2\pi s}{\pi^3s^5}\one_{\{|y|<s\}}
	=\frac{2}{\pi^2s^4}\one_{\{|y|<s\}}.
	\]
	Hence, for $r>0$
	\begin{equation}
		g(r)=\int_{(r,\infty)}\frac{2}{\pi^2s^4}\dd\nu(s) \leq \frac{2\nu((0,\infty))}{\pi^2r^4}\leq\frac{2\eta(\R^6)}{\pi^2r^4}
	\end{equation}
	The quantity on the left hand side above is non-increasing in $r$ by definition. This proves \eqref{eq:projectionbound}.
\end{proof}

\begin{proof}[Proof of Proposition~\ref{prop:nogo}]
	Suppose first that a finite positive measure $\eta$ on $\R^6$ has Fourier transform $\Fp$. Since $\Fp$ is radial, uniqueness of Fourier transforms of finite measures implies that $\eta$ is rotation invariant.
	
	We now project $\eta$ onto $\R^4$. Its Fourier transform is the restriction of $\Fp$ to $\R^4$. By Proposition~\ref{prop:measure}, this is also the Fourier transform of $\mu_{\lambda,h}^{(4)}$, so the two projected measures must coincide.
	The four-dimensional density in \eqref{eq:k4} is
	\begin{equation}\label{eq:appk4}
		k_{\lambda,h}^{(4)}(r)
		=2\int_0^{\sqrt{h^2-r^2}}
		b_{\lambda,h}\!\left(\sqrt{r^2+t^2}\right)\dd t
		+\frac{2\Alh}{h\sqrt{h^2-r^2}},
		\qquad 0<r<h.
	\end{equation}
	Its integral term is nonnegative. Therefore, recalling $\Alh=1/(8\pi^2(1+\Cosh))$, with $\Cosh=\cosh(\lambda h)$, we infer
	\[
	k_{\lambda,h}^{(4)}(r)\ge\frac{2\Alh}{h\sqrt{h^2-r^2}}
	\longrightarrow+\infty \qquad \mbox{as $r \to h$}.
	\]
	This contradicts Lemma~\ref{lem:radialprojection6}, which bounds the projected density uniformly on $h/2<r<h$. 
	
	This proves nonexistence in dimension six. If a positive finite measure in dimension $d>6$ had transform $\Fp$, its projection onto a six-dimensional subspace would provide the excluded positive measure there. The conclusion thus follows for every $d\ge6$.
\end{proof}

\begin{remark}[Further dimensional lifts]
	Proposition~\ref{prop:nogo} concerns positivity as a
	measure and does not preclude the use of distributional
	comparison kernels. Indeed, for $d=4,5$, the identity
	\[
	\big\langle\mu_{\lambda,h}^{(d+2)},\psi_0(|\cdot|)
	\big\rangle
	=
	\int_{\R^d}
	\left[
	\psi_0(|z|)+\frac{|z|}{d}\psi_0'(|z|)
	\right]\,d\mu_{\lambda,h}^{(d)}(z)
	\]
	shows that non-negativity and monotonicity of the radial
	profile suffice to obtain a non-negative pairing in
	dimensions six and seven.
	
	A further lift introduces a second derivative.
	For smooth radial tests, applying the same identity twice
	would give
	\[
		\big\langle\mu_{\lambda,h}^{(d+4)},
		\psi_0(|\cdot|)\big\rangle=
		\int_{\R^d} \left[\psi_0(|z|)+\frac{2d+3}{d(d+2)}|z|\psi_0'(|z|)
		+\frac{|z|^2}{d(d+2)}\psi_0''(|z|)
		\right]\,d\mu_{\lambda,h}^{(d)}(z),
		\qquad d=4,5.
	\]
	Here $\mu_{\lambda,h}^{(d+4)}$ denotes the inverse
	Fourier transform of \eqref{eq:appF} in dimension $d+4$,
	defined by continuing the distributional Fourier walk.
	Thus dimensions eight and nine involve second
	derivatives of the radial test, and subsequent lifts
	introduce higher derivatives.
	
	Non-negativity and monotonicity of $\psi_0$ alone do
	not guarantee a non-negative pairing in this formula.
	The properties of the geometric defect established
	above therefore no longer yield positivity directly.
	Whether its additional geometric structure can be
	used to extend the sharp comparison to dimensions
	$d\ge8$ remains open.
\end{remark}


\begin{thebibliography}{99}
	
	\bibitem{ACC22}
	E.~Acerbi, C.-N.~Chen, and Y.~S.~Choi,
	\emph{Stability of lamellar configurations in a nonlocal
		sharp interface model},
	SIAM J. Math. Anal. \textbf{54} (2022), no.~1, 558--594.
	
	\bibitem{AFM13}
	E.~Acerbi, N.~Fusco, and M.~Morini,
	\emph{Minimality via second variation for a nonlocal
		isoperimetric problem},
	Comm. Math. Phys. \textbf{322} (2013), no.~2, 515--557.
	
	\bibitem{ABCT20}
	S.~Alama, L.~Bronsard, R.~Choksi, and I.~Topaloglu,
	\emph{Droplet phase in a nonlocal isoperimetric problem
		under confinement},
	Commun. Pure Appl. Anal. \textbf{19} (2020), no.~1, 175--202.
	
	\bibitem{ACO09}
	G.~Alberti, R.~Choksi, and F.~Otto,
	\emph{Uniform energy distribution for an isoperimetric problem
		with long-range interactions},
	J. Amer. Math. Soc. \textbf{22} (2009), no.~2, 569--605.
	
	\bibitem{AFP}
	L.~Ambrosio, N.~Fusco and D.~Pallara,
	\emph{Functions of Bounded Variation and Free Discontinuity Problems},
	Oxford Mathematical Monographs, Oxford University Press, 2000.
	
	\bibitem{BC14}
	M.~Bonacini and R.~Cristoferi,
	\emph{Local and global minimality results for a nonlocal
		isoperimetric problem on $\mathbb{R}^N$},
	SIAM J. Math. Anal. \textbf{46} (2014), no.~4, 2310--2349.
	
	\bibitem{CCA15}
	B.~Chacko, C.~Chalmers, and A.~J.~Archer,
	\emph{Two-dimensional colloidal fluids exhibiting pattern formation},
	J. Chem. Phys. \textbf{143} (2015), 244904.
	
	\bibitem{CG26}
	O.~Chodosh and M.~Gianocca,
	\emph{No compromise in the liquid drop model},
	preprint (2026), \texttt{arXiv:2608.11517}.
	
	\bibitem{CR25}
	O.~Chodosh and I.~Ruohoniemi,
	\newblock \emph{On minimizers in the liquid drop model},
	\newblock Comm. Pure Appl. Math. \textbf{78} (2025), 366--381.
	
	\bibitem{CP10}
	R.~Choksi and M.~A.~Peletier,
	\emph{Small volume fraction limit of the diblock copolymer
		problem: I. Sharp-interface functional},
	SIAM J. Math. Anal. \textbf{42} (2010), no.~3, 1334--1370.
	
	\bibitem{CP11}
	R.~Choksi and M.~A.~Peletier,
	\emph{Small volume fraction limit of the diblock copolymer
		problem: II. Diffuse-interface functional},
	SIAM J. Math. Anal. \textbf{43} (2011), no.~2, 739--763.
	
	\bibitem{CR03}
	R.~Choksi and X.~Ren,
	\emph{On the derivation of a density functional theory for
		microphase separation of diblock copolymers},
	J. Stat. Phys. \textbf{113} (2003), nos.~1--2, 151--176.
	
	\bibitem{CS07}
	R.~Choksi and P.~Sternberg,
	\emph{On the first and second variations of a nonlocal
		isoperimetric problem},
	J. Reine Angew. Math. \textbf{611} (2007), 75--108.
	
	\bibitem{CiSp13}
	M.~Cicalese and E.~Spadaro,
	\emph{Droplet minimizers of an isoperimetric problem
		with long-range interactions},
	Comm. Pure Appl. Math. \textbf{66} (2013), no.~8, 1298--1333.
	
	\bibitem{CE03}
	H.~Cohn and N.~Elkies,
	\emph{New upper bounds on sphere packings I},
	Ann. of Math. (2) \textbf{157} (2003), no.~2, 689--714.
	
	
	\bibitem{CKMRV17}
	H.~Cohn, A.~Kumar, S.~D.~Miller, D.~Radchenko,
	and M.~Viazovska,
	\emph{The sphere packing problem in dimension $24$},
	Ann. of Math. (2) \textbf{185} (2017), no.~3, 1017--1033.
	
	
	\bibitem{Cri18}
	R.~Cristoferi,
	\emph{On periodic critical points and local minimizers
		of the Ohta--Kawasaki functional},
	Nonlinear Anal. \textbf{168} (2018), 81--109.
	
	\bibitem{DKR22}
	S.~Daneri, A.~Kerschbaum, and E.~Runa,
	\emph{One-dimensionality of the minimizers for a diffuse
		interface generalized antiferromagnetic model in general
		dimension},
	J. Funct. Anal. \textbf{283} (2022), no.~12, 109715.
	
	\bibitem{DR19}
	S.~Daneri and E.~Runa,
	\emph{Exact periodic stripes for minimizers of a local/nonlocal
		interaction functional in general dimension},
	Arch. Ration. Mech. Anal. \textbf{231} (2019), no.~1, 519--589.
	
	\bibitem{DR20}
	S.~Daneri and E.~Runa,
	\emph{Pattern formation for a local/nonlocal interaction
		functional arising in colloidal systems},
	SIAM J. Math. Anal. \textbf{52} (2020), no.~3, 2531--2560.
	
	\bibitem{DR24}
	S.~Daneri and E.~Runa,
	\emph{A rigorous approach to pattern formation for isotropic
		isoperimetric problems with competing nonlocal interactions},
	preprint, arXiv:2406.13773 (2024; revised 2025).
	
	\bibitem{FFMMM15}
	A.~Figalli, N.~Fusco, F.~Maggi, V.~Millot, and M.~Morini,
	\emph{Isoperimetry and stability properties of balls
		with respect to nonlocal energies},
	Comm. Math. Phys. \textbf{336} (2015), no.~1, 441--507.
	
	\bibitem{Fio26}
	A.~Fiorini,
	\emph{Local and global minimality of the lamella for the
		anisotropic Ohta--Kawasaki energy},
	preprint, arXiv:2604.13736 (2026).
	
	
	
	\bibitem{FL19}
	R.~L.~Frank and E.~H.~Lieb,
	\newblock \emph{Periodic energy minimizers for a one-dimensional liquid drop model},
	\newblock Lett. Math. Phys. \textbf{109} (2019), no.~9, 2069--2081.
	
	\bibitem{FLS26}
	R.~L.~Frank, M.~Lewin, and R.~Seiringer,
	\newblock \emph{Liquid drop model for nuclear matter in the low density limit},
	\newblock Comm. Pure Appl. Math. (2026), e70039.
	
	\bibitem{Gal11}
	B.~Galerne,
	\emph{Computation of the perimeter of measurable sets via their
		covariogram. Applications to random sets},
	Image Anal. Stereol. \textbf{30} (2011), no.~1, 39--51.
	
	\bibitem{GLL06}
	A.~Giuliani, J.~L.~Lebowitz, and E.~H.~Lieb,
	\emph{Ising models with long-range antiferromagnetic and
		short-range ferromagnetic interactions},
	Phys. Rev. B \textbf{74} (2006), 064420.
	
	\bibitem{GLL09}
	A.~Giuliani, J.~L.~Lebowitz, and E.~H.~Lieb,
	\emph{Periodic minimizers in 1D local mean field theory},
	Comm. Math. Phys. \textbf{286} (2009), no.~1, 163--177.
	
	\bibitem{GLS14}
	A.~Giuliani, E.~H.~Lieb, and R.~Seiringer,
	\emph{Formation of stripes and slabs near the ferromagnetic
		transition},
	Comm. Math. Phys. \textbf{331} (2014), no.~1, 333--350.
	
	\bibitem{GS16}
	A.~Giuliani and R.~Seiringer,
	\emph{Periodic striped ground states in Ising models
		with competing interactions},
	Comm. Math. Phys. \textbf{347} (2016), no.~3, 983--1007.
	
	\bibitem{GMS13}
	D.~Goldman, C.~B.~Muratov, and S.~Serfaty,
	\emph{The $\Gamma$-limit of the two-dimensional
		Ohta--Kawasaki energy. I. Droplet density},
	Arch. Ration. Mech. Anal. \textbf{210} (2013), no.~2, 581--613.
	
	\bibitem{GMS14}
	D.~Goldman, C.~B.~Muratov, and S.~Serfaty,
	\emph{The $\Gamma$-limit of the two-dimensional
		Ohta--Kawasaki energy. Droplet arrangement via the
		renormalized energy},
	Arch. Ration. Mech. Anal. \textbf{212} (2014), no.~2, 445--501.
	
	\bibitem{GR19}
	M.~Goldman and E.~Runa,
	\emph{On the optimality of stripes in a variational model
		with non-local interactions},
	Calc. Var. Partial Differential Equations
	\textbf{58} (2019), no.~3, Paper No.~103.
	
	\bibitem{GT13}
	L. Grafakos and G. Teschl,
	\emph{On Fourier transforms of radial functions and distributions},
	J. Fourier Anal. Appl. \textbf{19} (2013), 167--179.
	
	\bibitem{Jul14}
	V.~Julin,
	\emph{Isoperimetric problem with a Coulomb repulsive term},
	Indiana Univ. Math. J. \textbf{63} (2014), no.~1, 77--89.
	
	\bibitem{JP17}
	V.~Julin and G.~Pisante,
	\emph{Minimality via second variation for microphase separation
		of diblock copolymer melts},
	J. Reine Angew. Math. \textbf{729} (2017), 81--117.
	
	\bibitem{Ker22}
	A.~Kerschbaum,
	\newblock \emph{Striped patterns for generalized antiferromagnetic functionals
	with power law kernels of exponent smaller than $d+2$},
	\newblock Nonlinear Anal. \textbf{216} (2022), 112724.
	
	
	\bibitem{KM13}
	H.~Kn\"upfer and C.~B.~Muratov,
	\emph{On an isoperimetric problem with a competing nonlocal
		term I: The planar case},
	Comm. Pure Appl. Math. \textbf{66} (2013), no.~7, 1129--1162.
	
	\bibitem{KM14}
	H.~Kn\"upfer and C.~B.~Muratov,
	\emph{On an isoperimetric problem with a competing nonlocal
		term II: The general case},
	Comm. Pure Appl. Math. \textbf{67} (2014), no.~12, 1974--1994.
	
	\bibitem{KMN16}
	H.~Kn\"upfer, C.~B.~Muratov, and M.~Novaga,
	\emph{Low density phases in a uniformly charged liquid},
	Comm. Math. Phys. \textbf{345} (2016), no.~1, 141--183.
	
	\bibitem{KMN20}
	H.~Kn\"upfer, C.~B.~Muratov, and M.~Novaga,
	\emph{Emergence of nontrivial minimizers for the
		three-dimensional Ohta--Kawasaki energy},
	Pure Appl. Anal. \textbf{2} (2020), no.~1, 1--21.
	
	\bibitem{Mil26}
	E.~Milman,
	\emph{Spectral and isoperimetric bounds on flat tori},
	preprint, arXiv:2608.13052 (2026).
	
	\bibitem{MS14}
	M.~Morini and P.~Sternberg,
	\emph{Cascade of minimizers for a nonlocal isoperimetric
		problem in thin domains},
	SIAM J. Math. Anal. \textbf{46} (2014), no.~3, 2033--2051.
	
	\bibitem{Mu93}
	S.~M\"uller,
	\emph{Singular perturbations as a selection criterion for
		periodic minimizing sequences},
	Calc. Var. Partial Differential Equations
	\textbf{1} (1993), no.~2, 169--204.
	
	\bibitem{M10}
	C.~B.~Muratov,
	\emph{Droplet phases in non-local Ginzburg--Landau models
		with Coulomb repulsion in two dimensions},
	Comm. Math. Phys. \textbf{299} (2010), no.~1, 45--87.
	
	\bibitem{OK86}
	T.~Ohta and K.~Kawasaki,
	\emph{Equilibrium morphology of block copolymer melts},
	Macromolecules \textbf{19} (1986), no.~10, 2621--2632.
	
	\bibitem{RW00}
	X.~Ren and J.~Wei,
	\emph{On the multiplicity of solutions of two nonlocal
		variational problems},
	SIAM J. Math. Anal. \textbf{31} (2000), no.~4, 909--924.
	
	\bibitem{RW03}
	X.~Ren and J.~Wei,
	\emph{On energy minimizers of the diblock copolymer problem},
	Interfaces Free Bound. \textbf{5} (2003), no.~2, 193--238.
	
	\bibitem{RW05}
	X.~Ren and J.~Wei,
	\emph{Wriggled lamellar solutions and their stability in the
		diblock copolymer problem},
	SIAM J. Math. Anal. \textbf{37} (2005), no.~2, 455--489.
	
	\bibitem{Roc70}
	R.~T.~Rockafellar,
	\newblock \emph{Convex Analysis},
	\newblock Princeton Mathematical Series, Vol.~28,
	Princeton University Press, Princeton, NJ, 1970.
	
	\bibitem{SW08}
	R.~Schneider and W.~Weil,
	\newblock \emph{Stochastic and Integral Geometry},
	\newblock Probability and Its Applications, Springer, Berlin, 2008.
	
	\bibitem{Spa09}
	E.~N.~Spadaro,
	\emph{Uniform energy and density distribution: diblock copolymers' functional},
	Interfaces Free Bound. \textbf{11} (2009), no.~3, 447--474.
	
	\bibitem{ST11}
	P.~Sternberg and I.~Topaloglu,
	\emph{On the global minimizers of a nonlocal isoperimetric
		problem in two dimensions},
	Interfaces Free Bound. \textbf{13} (2011), no.~1, 155--169.
	
	\bibitem{Stradner04}
	A.~Stradner, H.~Sedgwick, F.~Cardinaux, W.~C.~K.~Poon,
	S.~U.~Egelhaaf, and P.~Schurtenberger,
	\emph{Equilibrium cluster formation in concentrated protein
		solutions and colloids},
	Nature \textbf{432} (2004), 492--495.
	
	\bibitem{V17}
	M.~S.~Viazovska,
	\emph{The sphere packing problem in dimension $8$},
	Ann. of Math. (2) \textbf{185} (2017), no.~3, 991--1015.
	
	
\end{thebibliography}
\end{document}